\theoremstyle{plain}
\newtheorem{thm}{Theorem}[section]
\newtheorem{prop}[thm]{Proposition}
\newtheorem{lemma}[thm]{Lemma}
\newtheorem{cor}[thm]{Corollary}
\theoremstyle{definition}
\newtheorem{definition}[thm]{Definition}
\theoremstyle{remark}
\newtheorem{remark}[thm]{Remark}
\newtheorem{example}[thm]{Example}
\newtheorem*{ack}{Acknowledgements}
\newcommand{\im}{\mathrm{im}}
\newcommand{\End}{\mathrm{End}}
\newcommand{\Aut}{\mathrm{Aut}}
\newcommand{\Rep}{{\rm Rep}}
\newcommand{\Irrep}{{\rm Irrep}}
\newcommand{\eqdef}{\stackrel{{\rm def.}}{=}}
\DeclareFontFamily{U}{rsf}{}
\DeclareFontShape{U}{rsf}{m}{n}{<5> <6> rsfs5 <7> <8> <9> rsfs7 <10-> rsfs10}{}
\DeclareMathAlphabet\Scr{U}{rsf}{m}{n}
\def\Z{\mathbb{Z}}
\def\C{\mathbb{C}}
\def\R{\mathbb{R}}
\def\H{\mathbb{H}}
\def\K{\mathbb{K}}
\def\rk{{\rm rk}}
\def\Ad{\mathrm{Ad}}
\def\rGamma{\mathrm{\Gamma}}
\def\Diff{\mathrm{Diff}}
\newcommand{\be}{\begin{equation*}}
\newcommand{\ee}{\end{equation*}}
\newcommand{\ben}{\begin{equation}}
\newcommand{\een}{\end{equation}}
\newcommand{\beqa}{\begin{eqnarray*}}
	\newcommand{\eeqa}{\end{eqnarray*}}
\newcommand{\beqan}{\begin{eqnarray}}
\newcommand{\eeqan}{\end{eqnarray}}
\newcommand{\id}{\mathrm{id}}
\newcommand{\lhookrightarrow}{\ensuremath{\lhook\joinrel\relbar\joinrel\rightarrow}}
\def\cR{{\mathcal R}}
\def\cC{{\mathcal C}}
\def\Cl{\mathrm{Cl}}
\def\cK{\mathcal{K}}
\def\Spin{\mathrm{Spin}}
\def\Pin{\mathrm{Pin}}
\def\Spin{\mathrm{Spin}}
\def\i{\mathbf{i}}
\def\SO{\mathrm{SO}}
\def\O{\mathrm{O}}
\def\U{\mathrm{U}}
\def\cI{\mathcal{I}}
\def\cN{\mathcal{N}}
\def\cC{\mathcal{C}}
\def\G_2{\mathrm{G_2}}
\newcommand{\Hom}{{\rm Hom}}
\newcommand{\Iso}{{\rm Iso}}
\def\Quad{\mathrm{Quad}}
\def\Aut{\mathrm{Aut}}
\def\Ob{\mathrm{Ob}}
\def\ClB{\mathrm{ClB}}
\def\mClB{\mathbb{C}\mathrm{lB}}
\def\Alg{\mathrm{Alg}}
\def\mClRep{\mathrm{\mathbb{C}lRep}}
\def\tAd{\widetilde{\Ad}}
\def\ttau{\tilde{\tau}}
\def\G{\mathrm{G}}
\def\L{\mathrm{L}}
\def\R{\mathbb{R}}
\def\rS{\mathrm{S}}
\def\w{\mathrm{w}}
\def\mCl{\mathbb{C}\mathrm{l}}
\def\rGamma{\mathrm{\Gamma}}
\begin{document}

\title{Complex Lipschitz structures and bundles of complex Clifford modules}

\author[C. Lazaroiu]{C. Lazaroiu} \address{Center for Geometry and
  Physics, Institute for Basic Science, Pohang, Republic of Korea}
\email{calin@ibs.re.kr}

\author[C. S. Shahbazi]{C. S. Shahbazi} \address{Department of Mathematics, University of Hamburg, Germany}
\email{carlos.shahbazi@uni-hamburg.de}

\thanks{2010 MSC. Primary: 53C27. Secondary: 53C50.}
\keywords{Spin geometry, Clifford bundles, Lipschitz structures}

\begin{abstract}
Let $(M,g)$ be a pseudo-Riemannian manifold of signature $(p,q)$. We
construct mutually quasi-inverse equivalences between the groupoid of
bundles of weakly-faithful complex Clifford modules on $(M,g)$ and the
groupoid of reduced complex Lipschitz structures on $(M,g)$. As an
application, we show that $(M,g)$ admits a bundle of irreducible
complex Clifford modules if and {\em only if} it admits either a
$\Spin^{c}(p,q)$ structure (when $p+q$ is odd) or a $\Pin^{c}(p,q)$
structure (when $p+q$ is even). When $p-q\equiv_8 3,4,6, 7$, we compare with the
classification of bundles of irreducible real Clifford modules which
we obtained in previous work. The results obtained in this note form a
counterpart of the classification of bundles of faithful complex
Clifford modules which was previously given by T. Friedrich and
A. Trautman.
\end{abstract}

\maketitle

\setcounter{tocdepth}{1} 
\tableofcontents


\section{Introduction}


In reference \cite{FriedrichTrautman}, T.~Friedrich and A.~Trautman
classified bundles of {\em faithful} complex Clifford modules over a
connected pseudo-Riemannian manifold $(M,g)$ in terms of complex
Lipschitz structures (see also \cite{BobienskiTrautman}). In this
note, we establish a general equivalence of categories between the
groupoid of {\em weakly-faithful}\footnote{See Definition
  \ref{def:wf}.} complex Clifford modules over a connected
pseudo-Riemannian manifold of arbitrary signature $(p,q)$ and the
groupoid of complex Lipschitz structures associated to the
corresponding fiberwise representation.  This equivalence, which we
give in Proposition \ref{propdef:functors} and Theorem
\ref{thm:BundleLipschitz}, clarifies and extends the correspondence
given in \cite[Sec. 5, Theorems 2 and 3]{FriedrichTrautman} between
bundles of faithful complex Clifford modules and the corresponding
complex Lipschitz structures.

The fibers of a bundle $S$ of {\em irreducible} complex Clifford
modules are faithful iff the dimension $d=p+q$ of $M$ is even (in
which case they are ``Dirac representations'' in the language of
\cite{FriedrichTrautman}). In this case, the results of
\cite{FriedrichTrautman} imply that the corresponding complex
Lipschitz structure is homotopy-equivalent with a $\Pin^c(p,q)$
structure.  When $d$ is odd, the fibers of a bundle $S$ of irreducible
complex Clifford modules are weakly-faithful but not faithful (they
are ``Pauli representations'' in the language of
\cite{FriedrichTrautman, BobienskiTrautman}). In this case, we show
that the corresponding complex Lipschitz structure is
homotopy-equivalent with a $\Spin^c(p,q)$ structure. In particular,
this shows that an odd-dimensional pseudo-Riemannian manifold admits a
bundle of irreducible complex Clifford modules if and {\em only if} it
admits a $\Spin^c(p,q)$ structure, in which case any such bundle is
associated to the appropriate $\Spin^c(p,q)$ structure through the
tautological representation. We also show that the classifications of
$\Spin^c(p,q)$ structures and of bundles of irreducible complex
Clifford modules agree and hence the isomorphism classes of the latter
form a torsor over $H^2(M,\mathbb{Z})$ when $(M,g)$ admits a
$\Spin^c(p,q)$ structure.

Reference \cite{Lipschitz} solved a similar classification problem for
bundles of irreducible {\em real} Clifford modules in terms of real
Lipschitz structures. The two problems are related by the faithful
{\em realification functor}, which maps a bundle $S$ of complex
Clifford modules to the underlying bundle of real Clifford
modules. When the fibers of $S$ are complex-irreducible, they are also
real irreducible exactly when $p-q\equiv_8 3,4,6,7$, so in these cases
it is natural to compare the two categories. For such signatures, we
show that the realification functor is faithful and strictly
surjective but not full and we describe its preimage. When $M$ is
unorientable with $p-q\equiv_8 3,7$, the pseudo-Riemannian manifold
$(M,g)$ does not admit bundles of complex irreducible Clifford
modules. However, such $(M,g)$ can admit $\Spin^o(p,q)$ structures
(and hence bundles of real irreducible Clifford modules) provided that
the topological conditions given in \cite{Lipschitz} are satisfied.

\

\paragraph{\bf Notations and conventions}

All manifolds considered are smooth, paracompact, Hausdorff and
connected.  All fiber bundles and sections thereof are smooth. For a
field $\K\in \{\R,\C\}$, let $\Alg_\K$ denote the category of unital
associative $\K$-algebras and unital algebra morphisms. The unit
groupoid of any category $\cC$ is denoted by $\cC^\times$; it is
defined as the groupoid obtained from $\cC$ by keeping only the
invertible morphisms.


\section{Complexified Clifford algebras and complex Clifford groups}


A {\em real quadratic space} is a pair $(V,h)$, where $V$ is a
finite-dimensional $\R$-vector space and $h$ is a symmetric and
non-degenerate $\R$-bilinear pairing defined on $V$.  Given a real
quadratic space $(V,h)$, let $\Cl(V,h)$ denote its real Clifford
algebra. By definition, the {\em complexified Clifford algebra}
of $(V,h)$ is the complexification $\mCl(V,h) \eqdef
\Cl(V,h)\otimes_{\R}\C$. There exists a natural unital isomorphism of
$\C$-algebras:
\be
\mCl(V,h) \simeq_{\Alg_\C} \Cl(V_\C,h_\C)~~, 
\ee
where $V_\C\eqdef V\otimes_\R \C$ is the complexification of $V$ and
$h_\C:V_\C\times V_\C\rightarrow \C$ is the $\C$-bilinear extension of
$h:V\times V\rightarrow \R$. 

\subsection{The categories $\Cl_\C$ and $\Cl_\R$}

Let $\Quad$ be the category whose objects are real quadratic spaces
and whose arrows are isometries between such and let $\Quad^{\times}$
denote the unit groupoid of $ \Quad$. The Clifford algebra
construction gives two functors:
\begin{itemize}
\itemsep 0.0em
\item The {\em real Clifford functor} $\Cl\colon\Quad\rightarrow
  \Alg_\R$, which maps a real quadratic space $(V,h)$ to its real
  Clifford algebra and maps an isometry $\varphi_{0}\colon (V,h)\rightarrow
  (V^{\prime},h^{\prime})$ to the map $\Cl(\varphi_{0})\colon
  \Cl(V,h)\rightarrow \Cl(V^{\prime},h^{\prime})$ defined as the
  unique unital morphism of associative $\R$-algebras which satisfies
  the condition $\Cl(\varphi_{0})|_V = \varphi_{0}$. The image of this
  functor is a {\em non-full} sub-category of $\Alg_\R$ which we
  denote by $Cl_\R$.
\item The {\em complexified Clifford functor}
  $\mCl\colon\Quad\rightarrow \Alg_{\C}$, which maps a quadratic real
  vector space $(V,h)$ to its complexified Clifford algebra
  $\mCl(V,h)$ and maps an isometry $\varphi_{0}\colon (V,h)\rightarrow
  (V^{\prime},h^{\prime})$ to the $\C$-linear extension
  $\mCl(\varphi_{0})\eqdef \Cl(\varphi_{0})\otimes_\R \C$ of
  $\Cl(\varphi_0)$. The image of this functor is a {\em non-full}
  sub-category of $\Alg_{\C}$ which we denote by $Cl_{\mathbb{C}}$.
\end{itemize}

The corestrictions of the functors $\Cl$ and $\mCl$ to their images
give isomorphisms (i.e., strictly surjective equivalences) of
categories between $\Quad$ and $Cl_\R$, respectively $\Quad$ and
$Cl_{\C}$. The category $\Quad$ admits a skeleton whose objects are
the {\em standard quadratic spaces} $\R^{p,q}\eqdef
(\R^{p+q},h_{p,q})$, where $h_{p,q}:\R^{p+q}\times \R^{p+q}\rightarrow
\R$ is the standard symmetric bilinear form of signature $(p,q)$. The
objects of this skeleton form a countable set indexed by the pairs
$(p,q)\in \Z_{\geq 0} \times \Z_{\geq 0}$. Accordingly, the category
$Cl_\R$ admits a skeleton whose objects are given by the real Clifford
algebras $\Cl_{p,q}\eqdef (\R^{p,q},h_{p,q})$, while $Cl_\C$ admits a
skeleton whose objects are given by the complex Clifford algebras
$\mCl_{p,q}\eqdef \mCl(\R^{p,q},h_{p,q})=\Cl_{p,q}\otimes_\R \C$.

\begin{remark}
A morphism $\psi\colon \mCl(V,h)\rightarrow
\mCl(V^{\prime},h^{\prime})$ in the category $Cl_{\mathbb{C}}$ is a
morphism of unital associative algebras which satisfies
$\psi(V)\subset V^{\prime}$ and hence is necessarily of the form
$\psi=\mCl(\varphi_{0})$ for a uniquely-determined isometry
$\varphi_{0} \colon (V,h)\rightarrow (V^{\prime},h^{\prime})$, given
by $\varphi_{0}\eqdef \psi|_V$. Notice that $Cl_{\C}$ does not contain
any isomorphism between complexified Clifford algebras associated to
quadratic real vector spaces of different signatures.
\end{remark}

\subsection{The complex Clifford group}

Let $(V,h)$ be a quadratic real vector space. Let $\pi$ be the parity
involution of $\Cl(V,h)$, $\tau$ be the reversion anti-automorphism
and $\ttau\eqdef \tau\circ \pi$ be the twisted reversion. Let
$\pi_{\C}$ be automorphism of $\mCl(V,h)$ given by the $\C$-linear
extension of $\pi$ and $\tau_{\C}$ be the {\em antilinear} extension
of $\tau$:
\begin{equation*}
\tau_{\C}(x\otimes_\R z) = \tau(x)\otimes_\R \bar{z}\, , \quad x\in \Cl(V,h)\, , \quad z\in \mathbb{C}\, .
\end{equation*}
Finally, let $\ttau_\C\eqdef \tau_{\C}\circ\pi_{\C}$.

Let $\mCl(V,h)^\times$ be the group of invertible elements of the
algebra $\mCl(V,h)$ and $\tAd:\mCl(V,h)^\times\rightarrow
\Aut_\C(\mCl(V,h))$ be the twisted adjoint action:
\begin{equation*}
\tAd(x)(y) = \pi(x) y x^{-1}~~~~\forall~ x\in \mCl(V,h)^\times~~\forall~y\in V_\C~~.
\end{equation*}
Recall that the {\em twisted norm} of $\mCl(V,h)$ is the map $N\colon
\mCl(V,h) \to \mCl(V,h)$ given by:
\begin{equation*}
N(x) \eqdef \tilde{\tau}_{\C}(x) x\, , \qquad\forall\, x\in \mCl(V,h)\, .
\end{equation*}

\noindent Also recall the following (see \cite{ABS}):

\begin{definition}
The {\em complex Clifford group} is the subgroup of $\mCl(V,h)^\times$
defined through:
\begin{equation*}
\rGamma(V,h) \eqdef \left\{ x\in \mCl(V,h)^\times \, | \, \tAd(x)(V)=V \right\}\, .
\end{equation*}
The {\em complex special Clifford group} is the subgroup consisting of
all even elements of $\rGamma(V,h)$:
\begin{equation*}
\rGamma^{s}(V,h) \eqdef \rGamma(V,h)\cap \mCl^{+}(V,h)~~,
\end{equation*}
where $\mCl^{+}(V,h)$ denotes the ``even subalgebra'' of $\mCl(V,h)$. 
\end{definition}

\

\noindent The twisted norm restricts to a group morphism $N\colon \Gamma(V,h) \to
\mathbb{C}^\times$.

\begin{remark}
We have $\rGamma(V,h) \subset \G(V,h)$, where: 
\be
\G(V,h)\eqdef \left\{x\in \mCl(V,h)^\times \quad | \quad \tAd(x)(V_\C)=V_\C \right\}
\ee
is the ordinary Clifford group of $(V_\C,h_\C)$. Moreover, an element $x$ of
$\G(V,h)$ belongs to $\rGamma(V,h)$ if and only if the orthogonal
transformation $\tAd(x)\in \O(V_{\C},h_{\C})$ preserves the real
subspace $V\subset V_\C$. Equivalently, if $c\colon V_{\C}\to V_{\C}$
is the real structure (antilinear involution) on $V_{\C}$ with real
subspace (subspace of fixed points) $V\subset V_{\C}$, then an element
$x$ of $\G(V,h)$ belongs to $\Gamma(V,h)$ if and only if $c\circ
\tAd(x) = \tAd(x)\circ c$. In contrast with $\G(V,h)$, the complex
Clifford group $\Gamma(V,h)$ depends on the signature of the real quadratic
space $(V,h)$.
\end{remark}

\

\noindent Finally, recall that the groups $\Pin^{c}(V,h)$ and $\Spin^{c}(V,h)$ are defined as follows \cite{ABS,Karoubi}:
\begin{eqnarray*}
\Pin^{c}(V,h) &=& \mathrm{Ker}(|N|\colon \Gamma(V,h)\to \mathbb{R}_{>0})~~,\\
\Spin^{c}(V,h) &=& \mathrm{Ker}(|N|\colon \Gamma^{s}(V,h)\to \mathbb{R}_{>0})~~.
\end{eqnarray*}

\begin{prop}
\label{prop:Cliffordgrouphomotopy}
One has short exact sequences:
\begin{eqnarray*}
1\longrightarrow \Pin^{c}(V,h) \lhookrightarrow \Gamma(V,h) &\stackrel{|N|}{\longrightarrow}& \mathbb{R}_{>0}\longrightarrow 1~~,\\
1\longrightarrow \Spin^{c}(V,h)\lhookrightarrow \Gamma^{s}(V,h) &\stackrel{|N|}{\longrightarrow}& \mathbb{R}_{>0}\longrightarrow 1~~.
\end{eqnarray*}
Moreover $\rGamma(V,h)$ is homotopy-equivalent with $\Pin^{c}(V,h)$
and $\rGamma^{s}(V,h)$ is homotopy-equivalent with $\Spin^{c}(V,h)$.
\end{prop}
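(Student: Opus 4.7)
The plan is to construct a continuous splitting of the two would-be short exact sequences using real positive scalars. First I would observe that $N\colon \Gamma(V,h)\to \C^\times$ is a group morphism (as recalled in the paragraph preceding the statement), so $|N|\colon \Gamma(V,h)\to \R_{>0}$ is a continuous group morphism whose kernel coincides with $\Pin^c(V,h)$ by definition; the same applies to its restriction to $\Gamma^s(V,h)$, whose kernel is $\Spin^c(V,h)$. Exactness at the kernel and middle terms is thus automatic, and the content of the exact sequences reduces to surjectivity of $|N|$ onto $\R_{>0}$.

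To prove surjectivity while simultaneously producing a section, I would use the scalars. For $t\in \R_{>0}$, the element $t\cdot 1\in \mCl(V,h)^\times$ is central and lies in the even subalgebra $\mCl^+(V,h)$, so $\tAd(t)=\id_{V_\C}$ and hence $t\in \Gamma^s(V,h)\subset \Gamma(V,h)$. From $\pi_\C(t)=t$ and $\tau_\C(t)=t$ we obtain $\ttau_\C(t)=t$, whence $N(t)=t^2$ and $|N|(t)=t^2$. Therefore the map $s\colon \R_{>0}\to \Gamma^s(V,h)$ defined by $s(t)\eqdef \sqrt{t}\cdot 1$ is a smooth group morphism with $|N|\circ s=\id_{\R_{>0}}$, which establishes both short exact sequences.

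For the homotopy equivalences, I would exploit the fact that the image of $s$ is central in $\Gamma(V,h)$, since scalars are central in $\mCl(V,h)$. This upgrades the split short exact sequences to topological direct product decompositions
\[
\Gamma(V,h)\,\simeq\,\Pin^c(V,h)\times \R_{>0}\,,\qquad \Gamma^s(V,h)\,\simeq\,\Spin^c(V,h)\times \R_{>0}\,,
\]
with explicit isomorphisms $x\mapsto \bigl(x/\sqrt{|N(x)|},\,\sqrt{|N(x)|}\bigr)$; note that the first component lies in the kernel of $|N|$ and remains even whenever $x$ is. Since $\R_{>0}$ is contractible, projection onto the first factor gives the claimed homotopy equivalences. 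There is no genuine obstacle here: the argument is the standard splitting of a short exact sequence of topological groups, and the only verification needed is the computation of $N$ on positive real scalars, which is immediate from the definitions of $\pi_\C$ and $\tau_\C$.
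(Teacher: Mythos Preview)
Your proof is correct and follows essentially the same approach as the paper: the retraction $x\mapsto x/\sqrt{|N(x)|}$ that appears as the first component of your product isomorphism is precisely the map the paper writes down as its homotopy retraction $r$. Your version is slightly more explicit in verifying surjectivity via the scalar section $s(t)=\sqrt{t}$ and in packaging the conclusion as a direct product decomposition, whereas the paper simply asserts exactness is obvious and that $r$ is a homotopy retraction.
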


\begin{proof}
Exactness of both sequences is obvious. The map:
\beqan
r\colon \rGamma(V,h) &\rightarrow& \Pin^{c}(V,h)\, , \nonumber\\
r(x) &\eqdef& \frac{x}{\sqrt{|N(x)|}}\, , \label{r}
\eeqan
is a homotopy retraction of $\rGamma(V,h)$ onto $\Pin^{c}(V,h)$. The
proof for $\Spin^c(V,h)$ is similar, the corresponding homotopy retraction 
being given by $r|_{\rGamma^s(V,h)}$. Notice the relation:
\be
\Ad(r(x))=\Ad(x)~~\forall x\in \rGamma(V,h)~~.
\ee
\end{proof}

\noindent
We have $\Gamma(V,h)\simeq \Pin^c(V,h)\cdot \C^\times$ and
$\Gamma^s(V,h)\simeq \Spin^c(V,h)\cdot \C^\times$.  The following
theorem summarizes the key properties of $\Gamma(V,h)$,
$\Gamma^{s}(V,h)$, $\Pin^{c}(V,h)$ and $\Spin^{c}(V,h)$.

\begin{thm}\cite{ABS}
There exist short exact sequences:
\begin{eqnarray*}
1\longrightarrow \C^\times \lhookrightarrow \Gamma(V,h) &\stackrel{\tAd}{\longrightarrow}& \O(V,h)\longrightarrow 1 \, ,\\
1\longrightarrow \U(1)\lhookrightarrow \Pin^{c}(V,h) &\stackrel{\tAd}{\longrightarrow}& \O(V,h)\longrightarrow 1 \, ,
\end{eqnarray*}
where $\U(1)$ is the subgroup of $\mCl(V,h)^\times$ consisting of
elements of the form $1\otimes z$ with $|z|=1$. Likewise, one has
short exact sequences:
\begin{eqnarray*}
1\longrightarrow \C^\times \lhookrightarrow \Gamma^{s}(V,h) &\stackrel{\tAd}{\longrightarrow}& \SO(V,h)\longrightarrow 1 \, ,\\
1\longrightarrow \U(1)\lhookrightarrow \Spin^{c}(V,h) &\stackrel{\tAd}{\longrightarrow}& \SO(V,h)\longrightarrow 1~~.
\end{eqnarray*}
Moreover, there exist canonical isomorphisms of groups:
\begin{equation*}
\Pin^{c}(V,h)\simeq \Pin(V,h)\cdot \U(1)\eqdef \left(\Pin(V,h)\times \U(1)\right)/\left\{ (1,1),(-1,-1)\right\}
\end{equation*} 
and:
\begin{equation*}
\Spin^{c}(V,h)\simeq \Spin(V,h)\cdot \U(1)\eqdef \left(\Spin(V,h)\times \U(1)\right)/\left\{ (1,1),(-1,-1)\right\}~~.
\end{equation*} 
\end{thm}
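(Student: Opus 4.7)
The plan is to first establish the four short exact sequences and then deduce the two canonical isomorphisms from them. For the sequence $1\to \C^\times\to \rGamma(V,h)\stackrel{\tAd}{\to}\O(V,h)\to 1$, I would begin by verifying that $\tAd(x)\in\O(V,h)$ for each $x\in\rGamma(V,h)$: the defining condition gives $\tAd(x)(V)=V$, and since $\tAd(x)$ is an algebra automorphism, applying it to the Clifford relation $v^2=h(v,v)\cdot 1$ yields $(\tAd(x)v)^2=h(v,v)\cdot 1$, which says $\tAd(x)$ preserves $h$. For the kernel, I would use the standard parity-decomposition argument: writing $x=x^++x^-$ and using that $V$ is odd, the condition $\tAd(x)|_V=\id_V$ translates to $x^+v=vx^+$ and $x^-v=-vx^-$ for all $v\in V$; expanding in a Clifford basis then forces $x^-=0$ and $x^+\in\C\cdot 1$, so $\ker(\tAd|_{\rGamma(V,h)})=\C^\times$. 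Surjectivity onto $\O(V,h)$ follows from Cartan--Dieudonn\'e: for non-isotropic $v\in V$, a direct computation using $vw+wv=2h(v,w)$ gives $\tAd(v)=r_v$, the reflection along $v$, and products of such elements exhaust $\O(V,h)$.

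For the sequences involving $\Pin^c(V,h)$ and $\Spin^c(V,h)$, I would exploit the retraction $r$ from Proposition~\ref{prop:Cliffordgrouphomotopy}, which satisfies $\Ad(r(x))=\Ad(x)$. The kernel of $\tAd|_{\Pin^c(V,h)}$ consists of $z\in\C^\times$ with $|N(z)|=1$; since $\ttau_\C$ restricts to complex conjugation on $\C\cdot 1$, we have $N(z)=\bar z z=|z|^2$, so this kernel equals $\U(1)$. Surjectivity onto $\O(V,h)$ is then immediate: given $\phi\in\O(V,h)$, choose any lift $x\in\rGamma(V,h)$ and observe that $r(x)\in\Pin^c(V,h)$ still satisfies $\tAd(r(x))=\phi$. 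The analogous sequences with $\SO(V,h)$ are obtained by restricting everywhere to the even Clifford subalgebra: each reflection lifts to an odd element of $V$, so products of an even number of such lifts land in $\rGamma^s(V,h)$ and realize precisely the orientation-preserving isometries.

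For the canonical isomorphism $\Pin^c(V,h)\simeq\Pin(V,h)\cdot\U(1)$, I would introduce the multiplication map $\Phi\colon \Pin(V,h)\times\U(1)\to\Pin^c(V,h)$, $\Phi(x,z)=xz$; this is a well-defined group morphism because $\U(1)\subset\C\cdot 1$ is central in $\mCl(V,h)$. Every $y\in\rGamma(V,h)$ admits a presentation $y=v_1\cdots v_k\cdot w$ with $v_i\in V$ non-isotropic and $w\in\C^\times$; rescaling $v_i\mapsto v_i/\sqrt{|h(v_i,v_i)|}$ turns the Clifford part into an element $x\in\Pin(V,h)$ and absorbs the scaling into a new complex factor $w'$. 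A direct computation using $\tau(v)=v$, $\pi(v)=-v$ and the antilinearity of $\ttau_\C$ gives $N(v_1\cdots v_k w)=|w|^2(-1)^k\prod_i h(v_i,v_i)$; combined with $|N(y)|=1$ and $|N(x)|=1$, this forces $|w'|=1$, proving surjectivity of $\Phi$. The kernel equation $xz=1$ forces $z=x^{-1}\in\Pin(V,h)\cap\U(1)=\{\pm 1\}$, which yields the stated quotient description. The $\Spin^c$ isomorphism is verified identically after restriction to the even subalgebra. I expect the main technical obstacle to be the parity-decomposition step in computing $\ker(\tAd|_{\rGamma(V,h)})$ together with the careful sign bookkeeping in the twisted-norm computation that underlies both the identification of $\ker(\tAd|_{\Pin^c(V,h)})$ and the surjectivity of $\Phi$.
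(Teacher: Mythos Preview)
The paper does not supply its own proof of this theorem: it is stated with the citation \cite{ABS} and the text moves directly to Section~3. There is therefore nothing to compare your argument against; you have reconstructed the standard proof from Atiyah--Bott--Shapiro and the usual textbook treatments.

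Your outline is correct in structure and in almost all details, but one step is not quite right as written. You justify $\tAd(x)\in\O(V,h)$ by saying that $\tAd(x)$ is an algebra automorphism and then applying it to $v^2=h(v,v)\cdot 1$. The twisted adjoint $\tAd(x)(y)=\pi(x)\,y\,x^{-1}$ is \emph{not} multiplicative for general $x$: one has $\tAd(x)(a)\tAd(x)(b)=\pi(x)a\,x^{-1}\pi(x)\,bx^{-1}$, which equals $\tAd(x)(ab)$ only when $x^{-1}\pi(x)$ is central, i.e.\ when $x$ is homogeneous. The clean fix is to run your parity-decomposition argument first, applied not to $\ker\tAd$ but to the element $\tau'(x)\pi(x)$ (with $\tau'$ the $\C$-linear extension of $\tau$): from $\tAd(x)(V)=V$ and $\tau'|_V=\id_V$ one deduces that $\tau'(x)\pi(x)$ commutes with $V$ in the graded sense, hence is a nonzero scalar; this is exactly what is needed to push through the quadratic-form computation. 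Alternatively, once you have established $\ker\tAd=\C^\times$ and the reflection formula $\tAd(v)=r_v$, Cartan--Dieudonn\'e shows that every $x\in\rGamma(V,h)$ differs from a product of vectors by a scalar, hence is homogeneous, and then your original argument applies. Either route closes the gap; the rest of your proof (the kernel computation, the use of the retraction $r$ for surjectivity onto $\Pin^c$, the twisted-norm calculation $N(z)=|z|^2$, and the multiplication map $\Phi$) is correct.
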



\section{Complex Clifford representations and complex Lipschitz groups}


\noindent Let $(V,h)$ be a real quadratic space.

\begin{definition}
A {\em complex Clifford representation} is a unital morphism of
associative $\R$-algebras $\gamma\colon
\Cl(V,h)\rightarrow \End_{\C}(S)$, where $S$ is a finite-dimensional
vector space over $\C$. 
\end{definition}

A complex Clifford representation $\gamma$ endows $S$ with the
structure of (unital) left module over $\Cl(V,h)$, while the
multiplication of vectors with complex scalars endows $S$ with a
compatible structure of (unital) finite rank (and necessarily free)
right module over the field of complex numbers. Conversely, any such
bimodule (which we shall call a {\em complex Clifford module}) can be
viewed as a complex Clifford representation.

Extending $\gamma$ by complex linearity gives a unital morphism of
associative $\C$-algebras:
\begin{equation}
\label{eq:Cextension}
\gamma_\C\colon \mCl(V,h)\rightarrow \End_{\C}(S)~.
\end{equation}
Conversely, every unital morphism of associative $\C$-algebras from
$\mCl(V,h)$ to $\End_\C(S)$ restricts to a complex representation of
$\Cl(V,h)$. Any complex Clifford representation
$\gamma\colon\Cl(V,h)\rightarrow \End_\C(S)$ also induces a real
Clifford representation $\gamma_{\R}\colon
\Cl(V,h)\rightarrow \End_\R(S_{\mathbb{R}})$ on the
\emph{realification} $S_{\mathbb{R}}$ of $S$ (defined as the
underlying real vector space of $S$), which is an $\R$-vector space of
dimension $\dim_{\mathbb{R}} S_{\mathbb{R}} = 2 \dim_{\mathbb{C}} S$.

\subsection{Unbased morphisms of complex Clifford representations}

Let $\gamma\colon \Cl(V,h)\rightarrow \End_\C(S)$ and $\gamma^{\prime}
\colon\Cl(V^{\prime},h^{\prime})\rightarrow \End_\C(S^{\prime})$ be
two complex Clifford representations.

\begin{definition}
\label{DefClMf}
A {\em morphism of complex Clifford representations} from $\gamma$ to
$\gamma^{\prime}$ is a pair $(\varphi_0,\varphi)$ such that:
\begin{enumerate}[1.]
\itemsep 0.0em
\item $\varphi_0 \colon V\rightarrow V^{\prime}$ is an isometry from
  $(V,h)$ to $(V^{\prime},h^{\prime})$
\item $\varphi\colon S\rightarrow S^{\prime}$ is a $\C$-linear map
\item $\gamma^{\prime}(\Cl(\varphi_0)(x))\circ \varphi = \varphi \circ
  \gamma(x)$ for all $x\in\Cl(V,h)$.
\end{enumerate}
A morphism of complex Clifford representations is called {\em
  based} if $V'=V$ and $\varphi_0=\id_V$. A (not necessarily based)
isomorphism of complex Clifford representations from $\gamma$ to
itself is called an {\em automorphism} of $\gamma$.
\end{definition}

In our language, a morphism of complex representations in the
traditional sense corresponds to a \emph{based} morphism. Since
$\Cl(V,h)$ is generated by $V$ and $\Cl(V^{\prime},h^{\prime})$ is
generated by $V^{\prime}$ while the morphism $\Cl(\varphi_{0})$ is
$\R$-linear, condition 3. in Definition \ref{DefClMf} is equivalent
with:
\begin{equation*}
\gamma^{\prime} (\varphi_0(v))\circ \varphi = \varphi \circ \gamma(v)~~~\forall\, v\in V\, ,
\end{equation*}
which can also be written as: 
\begin{equation*}
R_{\varphi}\circ \gamma^{\prime} \circ \varphi_0 = L_{\varphi} \circ \gamma|_V\, ,
\end{equation*}

\noindent
or
\begin{equation*}
R_{\varphi}\circ \gamma^{\prime}\circ \Cl(\varphi_0)=L_{\varphi} \circ \gamma\, ,
\end{equation*}

\noindent
where $L_{\varphi}\colon\End_\C(S)\rightarrow \Hom_\C(S,S')$ and
$R_{\varphi}\colon\End_\C(S')\rightarrow \Hom_\C(S,S')$ are defined as follows:
\begin{equation*}
L_{\varphi}(A)\eqdef \varphi \circ A\, ,\quad R_{\varphi}(B)\eqdef B\circ \varphi\, , \quad \forall\,\, A\in \End_\C(S)\, , \quad\forall\,\, B\in \End_\C(S')\, .
\end{equation*}

\begin{equation*}
\xymatrix{
\mathrm{Cl}(V^{\prime},h^{\prime})~  \ar[rr]^{\gamma^{\prime}}
&& ~\End_\C(S^{\prime}) ~ \ar[d]^{R_{\varphi}} \\
&& \Hom_\C(S,S')\\
\mathrm{Cl}(V,h) \ar[uu]^{\mathrm{Cl}(\varphi_0)} \ar[rr]_{\gamma} && \End_\C(S) \ar[u]^{L_{\varphi}} 
}
\end{equation*}

\noindent	
With morphisms defined as above, complex Clifford representations form
a category denoted $\mClRep$, were compatible morphisms
$(\varphi_0,\varphi)$ and $(\varphi^{\prime}_0,\varphi^{\prime})$
compose pairwise, that is  $(\varphi_0^{\prime},\varphi^{\prime})\circ
(\varphi_0,\varphi)\eqdef (\varphi_0^{\prime} \circ
\varphi_0,\varphi^{\prime} \circ \varphi)$. The functor
$\Pi:\mClRep\rightarrow Cl_\R$ which takes $\gamma$ into $\Cl(V,h)$
and $(\varphi_0,\varphi)$ into $\Cl(\varphi_{0})$ is a fibration whose
fiber above $\Cl(V,h)$ is the usual category $\Rep_\C(\Cl(V,h))$ of
complex representations of $\Cl(V,h)$ (whose morphisms are the based
morphisms of complex representations). Isomorphisms in
$\Rep_\C(\Cl(V,h))$ are the usual equivalences of complex
representations. Hence equivalences of complex representations of real
Clifford algebras coincide with based isomorphisms of $\mClRep$; in
particular, any isomorphism class of complex Clifford representations
in the category $\mClRep$ decomposes as a disjoint union of
equivalence classes. A morphism $(\varphi_0,\varphi)$ is an
isomorphism in $\mClRep$ if and only if both $\varphi_0$ and $\varphi$
are bijective.
	
\begin{prop}
\label{prop:Ad}
Let $(\varphi_0,\varphi)\colon\gamma\stackrel{\sim}{\rightarrow}
\gamma^{\prime}$ be an isomorphism of complex Clifford
representations. Then $(\varphi_{0},\varphi)$ satisfies
$\Ad(\varphi)(\gamma(V))=\gamma^{\prime}(V^{\prime})$ and
$\gamma^{\prime}\circ \varphi_0=\Ad(\varphi)\circ \gamma|_V$, where
the unital isomorphism of $\C$-algebras
$\Ad(\varphi)\colon\End_\C(S)\rightarrow \End_\C(S^{\prime})$ is
defined through:
\begin{equation*}
\Ad(\varphi)(A)\eqdef \varphi\circ A \circ \varphi^{-1}
\end{equation*}
for all $A\in \End_\C(S)$. 
\end{prop}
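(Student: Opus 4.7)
The plan is to unwind the definition of a morphism in $\mClRep$ and use invertibility of $\varphi$ (which follows from the hypothesis that $(\varphi_0,\varphi)$ is an isomorphism, together with the characterisation recalled just before the Proposition, namely that an isomorphism in $\mClRep$ has both components bijective).

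First I would start from condition 3.\ of Definition \ref{DefClMf}, namely
\begin{equation*}
\gamma^{\prime}(\Cl(\varphi_0)(x))\circ \varphi = \varphi \circ \gamma(x)\qquad\forall\, x\in\Cl(V,h)\, ,
\end{equation*}
and post-compose with $\varphi^{-1}$ on the right. This yields
\begin{equation*}
\gamma^{\prime}(\Cl(\varphi_0)(x)) = \varphi\circ\gamma(x)\circ\varphi^{-1} = \Ad(\varphi)(\gamma(x))\qquad\forall\, x\in\Cl(V,h)\, .
\end{equation*}
Specialising this identity to $x=v\in V\subset \Cl(V,h)$ and using $\Cl(\varphi_0)|_V=\varphi_0$ (which is part of the definition of the functor $\Cl$) gives the second claimed equation $\gamma^{\prime}\circ \varphi_0 = \Ad(\varphi)\circ \gamma|_V$.

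Finally, since $\varphi_0\colon V\to V^{\prime}$ is a bijective isometry (being an isomorphism component), applying both sides of that equation to $V$ and using $\varphi_0(V)=V^{\prime}$ yields
\begin{equation*}
\gamma^{\prime}(V^{\prime}) = \gamma^{\prime}(\varphi_0(V)) = \Ad(\varphi)(\gamma(V))\, ,
\end{equation*}
which is the first claimed equality. There is no real obstacle here: the statement is essentially a rewriting of the intertwining condition using that $\varphi$ is invertible and $\Cl(\varphi_0)$ extends $\varphi_0$ uniquely from $V$ to $\Cl(V,h)$.
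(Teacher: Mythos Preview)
Your proof is correct and follows essentially the same route as the paper's own proof: both start from condition~3 of Definition~\ref{DefClMf}, use invertibility of $\varphi$ to rewrite the intertwining relation as $\gamma'\circ\Cl(\varphi_0)=\Ad(\varphi)\circ\gamma$, then restrict to $V$ via $\Cl(\varphi_0)|_V=\varphi_0$ and conclude from $\varphi_0(V)=V'$. Your write-up is slightly more explicit, but the argument is the same.
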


\begin{proof}
When $(\varphi_0,\varphi)$ is an isomorphism, condition 3. in
Definition \ref{DefClMf} becomes:
\begin{equation*}
\gamma^{\prime}\circ \Cl(\varphi_0) =\Ad(\varphi)\circ \gamma~,
\end{equation*}
being equivalent with the condition $\gamma^{\prime}\circ
\varphi_0=\Ad(\varphi)\circ \gamma|_V$, which states that $\varphi$
       {\em implements} the isometry $\varphi_0\colon (V,h)\rightarrow
       (V^{\prime},h^{\prime})$ at the level of the representation
       spaces. The Proposition now follows from the fact that
       $\Cl(\varphi_0)|_V=\varphi_0$ and $\varphi_{0}(V) =
       V^{\prime}$.
\end{proof}	

\subsection{Weakly-faithful complex Clifford representations}

\begin{definition}
\label{def:wf}
A complex Clifford representation $\gamma\colon
\Cl(V,h)\rightarrow \End_\C(S)$ is called {\em weakly-faithful} if the
restriction $\gamma_0\eqdef \gamma|_V:V\rightarrow \End_\C(S)$ is an
injective map.
\end{definition}

\begin{remark}
Notice that $\gamma|_V=\gamma_\C|_V$ and that any faithful complex
Clifford representation is weakly-faithful.
\end{remark}

\noindent
Let $\mClRep_w$ denote the full sub-category of $\mClRep$ whose
objects are the weakly-faithful complex Clifford representations and
let $\mClRep_w^\times$ denote the corresponding unit groupoid. When
$\gamma$ and $\gamma^{\prime}$ are weakly-faithful and
$(\varphi_0,\varphi)\colon \gamma\rightarrow \gamma^{\prime}$ is an
{\em isomorphism} of Clifford representations, Proposition
\ref{prop:Ad} shows that $\varphi_0$ is uniquely determined by
$\varphi$ through the relation:
\begin{equation}
\label{AdRelation}
\varphi_0 = (\gamma^{\prime}|_{V^{\prime}})^{-1}\circ \Ad(\varphi)\circ \gamma|_V\, .
\end{equation}

\noindent
It is easy to see that the converse also holds, so we have:
\begin{prop}
\label{prop:AdRelation}
Assume that $\gamma$ and $\gamma^{\prime}$ are weakly-faithful. Then
any isomorphism
$(\varphi_0,\varphi)\colon\gamma\stackrel{\sim}{\rightarrow}
\gamma^{\prime}$ is determined by the linear isomorphism
$\varphi\colon S\stackrel{\sim}{\rightarrow} S^{\prime}$. We have
$\Ad(\varphi)(\gamma(V))=\gamma^{\prime}(V^{\prime})$ and $\varphi_0$
is given by relation \eqref{AdRelation}. Conversely, any linear
isomorphism $\varphi\colon S\rightarrow S'$ which satisfies
$\Ad(\varphi)(\gamma(V))=\gamma^{\prime}(V^{\prime})$ determines an
isomorphism of quadratic spaces $\varphi_0:(V,h)\rightarrow
(V^{\prime},h^{\prime})$ through the relation \eqref{AdRelation} and
we have $(\varphi_0,\varphi)\in
\Hom_{\mClRep^\times_{w}}(\gamma,\gamma^{\prime})$.
\end{prop}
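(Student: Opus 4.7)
The forward direction is essentially a direct consequence of Proposition \ref{prop:Ad}: given an isomorphism $(\varphi_0,\varphi)\colon\gamma\stackrel{\sim}{\rightarrow}\gamma'$, that proposition already yields $\Ad(\varphi)(\gamma(V)) = \gamma'(V')$ together with $\gamma' \circ \varphi_0 = \Ad(\varphi)\circ \gamma|_V$. Weak-faithfulness of $\gamma'$ means $\gamma'|_{V'}\colon V' \to \gamma'(V')$ is a linear bijection, so the latter identity rearranges uniquely to \eqref{AdRelation}, showing $\varphi_0$ is determined by $\varphi$.

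For the converse, I would start from a linear isomorphism $\varphi\colon S \to S'$ satisfying $\Ad(\varphi)(\gamma(V)) = \gamma'(V')$ and define $\varphi_0 \eqdef (\gamma'|_{V'})^{-1} \circ \Ad(\varphi) \circ \gamma|_V$. This composition is well defined: $(\gamma'|_{V'})^{-1}$ exists as a linear map $\gamma'(V') \to V'$ by weak-faithfulness of $\gamma'$, and $\Ad(\varphi)$ sends $\gamma(V)$ onto $\gamma'(V')$ by hypothesis. Bijectivity of $\varphi_0$ then follows from weak-faithfulness of $\gamma$ (so $\gamma|_V$ is injective onto $\gamma(V)$) combined with the bijectivity of the two remaining factors.

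The key verification is that $\varphi_0$ is an isometry. To see this, I would apply the algebra automorphism $\Ad(\varphi)\colon \End_\C(S) \to \End_\C(S')$ to the Clifford relation $\gamma(v)\gamma(w) + \gamma(w)\gamma(v) = 2 h(v,w)\,\id_S$, using $\Ad(\varphi)\circ \gamma|_V = \gamma' \circ \varphi_0$ on the left-hand side and $\Ad(\varphi)(\id_S) = \id_{S'}$ on the right. Comparison with the Clifford relation in $\End_\C(S')$ applied to $\varphi_0(v), \varphi_0(w) \in V'$ yields
\[
2 h(v,w)\,\id_{S'} = 2 h'(\varphi_0(v),\varphi_0(w))\,\id_{S'},
\]
from which $h(v,w) = h'(\varphi_0(v),\varphi_0(w))$.

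Finally, by construction $\gamma'(\varphi_0(v)) \circ \varphi = \varphi \circ \gamma(v)$ for every $v \in V$. Since $V$ generates $\Cl(V,h)$ as a unital $\R$-algebra and both $\gamma$ and $\gamma' \circ \Cl(\varphi_0)$ are algebra morphisms, this extends multiplicatively to $\gamma'(\Cl(\varphi_0)(x)) \circ \varphi = \varphi \circ \gamma(x)$ for all $x \in \Cl(V,h)$, which is condition 3 of Definition \ref{DefClMf}. Because both $\varphi_0$ and $\varphi$ are bijections, $(\varphi_0,\varphi) \in \Hom_{\mClRep^\times_{w}}(\gamma,\gamma')$. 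The only step that is not a bookkeeping consequence of Proposition \ref{prop:Ad} is the isometry check, and even there the Clifford relation supplies the verification essentially for free; I do not anticipate any real obstacle.
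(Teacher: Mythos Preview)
Your proof is correct and follows the same route as the paper, which derives the forward direction from Proposition~\ref{prop:Ad} and relation~\eqref{AdRelation} and then simply asserts that ``it is easy to see that the converse also holds.'' You have merely spelled out that easy converse in full, including the isometry check via the Clifford relation, so there is nothing to correct.
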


\noindent
In view of this, we denote isomorphisms of weakly-faithful complex
Clifford representations only by $\varphi$ (since $\varphi$ determines
$\varphi_0$ in this case). From the previous proposition, we obtain:

\begin{cor}
\label{cor:Homequiv}
The group $\Hom_{\mClRep_w^\times}(\gamma,\gamma^{\prime})$ can be
identified with the following subset of the set 
$\Iso_\C(S,S')$ of linear isomorphisms from $S$ to $S'$:
\begin{eqnarray*}
\label{isomwf}
\Hom_{\mClRep_w^\times}(\gamma,\gamma^{\prime}) \equiv\{\varphi\in \Iso_\C(S,S')|\Ad(\varphi)(\gamma(V))=\gamma^{\prime}(V^{\prime})\}\, ,\quad \gamma,\gamma^{\prime}\in \Ob (\mClRep_w)~.
\end{eqnarray*}
\end{cor}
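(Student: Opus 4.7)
The plan is to establish the claimed identification as a bijection of sets by exhibiting mutually inverse maps between the two sides, after which one verifies that the identification respects the group structure of $\Hom_{\mClRep_w^\times}(\gamma,\gamma^{\prime})$ (when $\gamma=\gamma'$). Everything needed is essentially packaged in Proposition \ref{prop:AdRelation}; the corollary is a reformulation at the level of underlying sets.

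First I would define a forward map by projection onto the second component: send $(\varphi_0,\varphi)\in \Hom_{\mClRep_w^\times}(\gamma,\gamma^{\prime})$ to $\varphi\in \Iso_\C(S,S')$. That the image lies in the prescribed subset is exactly the content of Proposition \ref{prop:Ad}, which yields $\Ad(\varphi)(\gamma(V))=\gamma^{\prime}(V^{\prime})$. Injectivity of this map uses the weak faithfulness hypothesis: by Proposition \ref{prop:AdRelation}, the first component $\varphi_0$ is forced by $\varphi$ through the formula \eqref{AdRelation}, so the pair $(\varphi_0,\varphi)$ is recovered from $\varphi$ alone.

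Next I would construct the inverse map. Given any $\varphi\in \Iso_\C(S,S')$ satisfying $\Ad(\varphi)(\gamma(V))=\gamma^{\prime}(V^{\prime})$, the converse part of Proposition \ref{prop:AdRelation} directly produces an isometry $\varphi_0\colon(V,h)\to(V^{\prime},h^{\prime})$ via \eqref{AdRelation} such that $(\varphi_0,\varphi)\in \Hom_{\mClRep^\times_w}(\gamma,\gamma^{\prime})$. Composition in either order with the forward map reproduces the identity, using once again the uniqueness statement in Proposition \ref{prop:AdRelation}.

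No step presents a genuine obstacle here: the work has already been done in Propositions \ref{prop:Ad} and \ref{prop:AdRelation}. The only point that merits a line of verification is that the bijection is compatible with composition in the case $\gamma=\gamma^{\prime}$, so that it upgrades from a bijection of sets to an isomorphism of groups; this follows immediately because composition of morphisms in $\mClRep^\times_w$ acts on the second component by composition of the underlying linear isomorphisms, matching composition in $\Iso_\C(S,S')$.
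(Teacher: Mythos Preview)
Your proposal is correct and matches the paper's approach: the paper presents this corollary without a separate proof, simply noting that it follows from Proposition~\ref{prop:AdRelation}. Your explicit unpacking of the forward and inverse maps via Propositions~\ref{prop:Ad} and~\ref{prop:AdRelation} is exactly the argument the paper leaves implicit.
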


\subsection{Complex Lipschitz groups}

When $\gamma$ is weakly-faithful, we can identify $V$ with its image
$W\eqdef \gamma(V)$ inside $\End_{\C}(S)$. Equip $W$ with the bilinear
form $\mu$ induced by $\gamma$ from $(V,h)$, so that $(W,\mu)$ is a
real quadratic space isometric with $(V,h)$.

\begin{definition}
The {\em complex Lipschitz group} of a weakly-faithful complex
Clifford representation $\gamma:\Cl(V,h)\rightarrow \End_\C(S)$ is
defined as the following sub-group of $\Aut_\C(S)$:
\begin{equation*}
\L_\gamma \eqdef \left\{ \varphi \in \Aut_{\C}(S)\, | \, \Ad(\varphi)(W)= W\right\}\, ,
\end{equation*}
where $W = \gamma(V)$.
\end{definition}

\noindent 
Notice that $\L_\gamma$ consists of $\C$-linear transformations of
$S$.

\begin{definition}
\label{def:adjointRep}
Given a weakly-faithful complex Clifford representation
$\gamma\in\Ob(\mClRep_w)$, we define the {\em adjoint representation}
$\Ad_{\gamma}\colon \L_\gamma \to \O(V,h)$ of $\L_\gamma$ by:
\begin{equation*}
\Ad_{\gamma}(\varphi) \eqdef (\gamma|_V)^{-1}\circ \Ad(\varphi) \circ (\gamma|_V)~,
\end{equation*}
for all $\varphi\in \L_\gamma$.
\end{definition}

\noindent
The following proposition partially characterizes the image of the
adjoint representation $\Ad_{\gamma}$ and in particular
shows that $\Ad_{\gamma}$ is well-defined.

\begin{prop}
Let $\gamma:\Cl(V,h)\rightarrow \End_\C(S)$ be a weakly-faithful
complex Clifford representation and $w$ be an element of the space
$W=\gamma(V)$. Then $w$ is an element of $\L_\gamma$ and $\Ad_{\gamma}(w) =
-R_{w_{0}}$, where $R_{w_{0}}$ denotes the orthogonal reflection of
$(V,h)$ with respect to the hyperplane orthogonal to the vector $w_{0} \eqdef
(\gamma|_V)^{-1}(w)\in V$. If $d\eqdef \dim_\R V$ is even, then we
have $\Ad_{\gamma}(\L_\gamma) = \O(V,h)$. If $d$ is odd, then we have
$\SO(V,h) \subseteq \Ad_{\gamma}(\L_\gamma)$.
\end{prop}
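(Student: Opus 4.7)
The plan is to reduce everything to the Clifford relation $\gamma(u)\gamma(v) + \gamma(v)\gamma(u) = 2h(u,v)\id_S$ on $W = \gamma(V)$ and to the Cartan--Dieudonn\'e theorem. The element $w = \gamma(w_0)$ is invertible in $\End_\C(S)$ if and only if $w_0$ is non-isotropic, so the assertion $w\in\L_\gamma$ is to be understood for non-isotropic $w$ (the case for which the reflection $R_{w_0}$ is also defined).

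For the first two assertions, assume $h(w_0,w_0)\neq 0$. The Clifford relation gives $w^2 = h(w_0,w_0)\id_S$, hence $w^{-1} = h(w_0,w_0)^{-1}w$. For arbitrary $v\in V$ I would use $\gamma(w_0)\gamma(v) = 2h(w_0,v)\id_S - \gamma(v)\gamma(w_0)$ to compute
\begin{equation*}
\Ad(w)(\gamma(v)) = h(w_0,w_0)^{-1}\,\gamma(w_0)\gamma(v)\gamma(w_0) = \frac{2h(w_0,v)}{h(w_0,w_0)}\gamma(w_0) - \gamma(v) = -\gamma(R_{w_0}(v)).
\end{equation*}
This shows $\Ad(w)(W)\subseteq W$, and since $\Ad(w)$ is invertible and $W$ is finite-dimensional, $\Ad(w)(W) = W$, proving $w\in\L_\gamma$; applying $(\gamma|_V)^{-1}$ to the same identity yields $\Ad_\gamma(w) = -R_{w_0}$. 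Well-definedness of $\Ad_\gamma$ as a map into $\O(V,h)$ follows from the same principle: for any $\varphi\in\L_\gamma$, the algebra automorphism $\Ad(\varphi)$ of $\End_\C(S)$ fixes $\id_S$ and preserves $W$, hence preserves the scalar identity $\gamma(v)^2 = h(v,v)\id_S$, so $\Ad_\gamma(\varphi)$ is an isometry of $(V,h)$.

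For the image assertions, $\Ad_\gamma$ is a group homomorphism, and its multiplicativity combined with the previous step gives
\begin{equation*}
\Ad_\gamma\bigl(\gamma(v_1)\cdots\gamma(v_k)\bigr) = (-1)^k R_{v_1}\cdots R_{v_k}
\end{equation*}
for non-isotropic $v_i\in V$. By Cartan--Dieudonn\'e, every $\sigma\in\O(V,h)$ is a product of at most $d$ such reflections, and $\sigma\in\SO(V,h)$ if and only if some such factorization has even length; choosing $k$ even therefore shows $\SO(V,h)\subseteq \Ad_\gamma(\L_\gamma)$ in all dimensions. When $d$ is even, $-\id_V = R_{e_1}\cdots R_{e_d}$ for any orthogonal basis $\{e_i\}$ of $(V,h)$ (all basis vectors being non-isotropic), so $-\id_V\in \SO(V,h)\subseteq \Ad_\gamma(\L_\gamma)$; for any $\sigma\in \O(V,h)\setminus \SO(V,h)$, writing $\sigma$ as an odd product of reflections supplies a preimage of $-\sigma$ under $\Ad_\gamma$, and multiplying by a preimage of $-\id_V$ delivers $\sigma$, yielding $\Ad_\gamma(\L_\gamma) = \O(V,h)$. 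The main bookkeeping subtlety is the sign $(-1)^k$ introduced by $\Ad_\gamma$ (by contrast, the twisted adjoint $\tAd$ would remove this sign); this sign is precisely what prevents the odd-dimensional case from upgrading to $\O(V,h)$, since there $-\id_V\notin\SO(V,h)$ blocks the final multiplication step.
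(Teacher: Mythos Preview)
Your proof is correct and follows essentially the same approach as the paper's: an explicit Clifford-relation computation yielding $\Ad_\gamma(w)=-R_{w_0}$, followed by an appeal to Cartan--Dieudonn\'e. You have simply filled in the details the paper leaves implicit, and your remark that the assertion $w\in\L_\gamma$ requires $w_0$ non-isotropic is a worthwhile clarification.
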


\begin{proof}
Explicit computation shows that:
\begin{equation*}
\Ad_{\gamma}(w)(v) = w_{0} v w_{0}^{-1} = - v + 2\,\frac{g(w_{0},v)}{g(w_{0},w_{0})} w_{0} = -R_{w_{0}}(v)\, ,
\end{equation*}
for every $v\in V$, where $w_{0} = \gamma^{-1}(w)\in V$. The
Cartan-Dieudonn\'e theorem implies that
$\Ad_{\gamma}(\L_\gamma) = \O(V,h)$ for even $d$ and $\SO(V,h)
\subseteq \Ad_{\gamma}(\L_\gamma)$ for odd $d$.
\end{proof}

\begin{remark}
For $d$ odd, the image $\Ad_{\gamma}(\L_\gamma)\subseteq \O(V,h)$ of
$\L_\gamma$ in $\O(V,h)$ depends on the details of the particular
weakly-faithful representation $\gamma$ under consideration. We will
see for instance in Proposition \ref{prop:Cliffordgrouphomotopy} that
if $\gamma$ is irreducible, then $\Ad_{\gamma}(\L_\gamma) =
\SO(V,h)$.
\end{remark}

\noindent
The following Proposition follows form Corollary \ref{cor:Homequiv},
but we give a direct proof because of its importance later.

\begin{prop}
Let $\gamma$ be a weakly-faithful complex Clifford
representation. Then the Lipschitz group $\L_\gamma$ is canonically
isomorphic with the automorphism group $\Aut_{\mClRep_w}(\gamma)$ of
$\gamma$ in the category $\mClRep_w$ of weakly faithful complex Clifford representations. In particular, the isomorphism
class of the group $\L_\gamma$ depends only on the isomorphism class of
$\gamma$ in that category.
\end{prop}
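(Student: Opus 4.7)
The plan is to exhibit the canonical isomorphism explicitly as the map which sends $\varphi\in \L_\gamma$ to the pair $(\varphi_0,\varphi)\in \Aut_{\mClRep_w}(\gamma)$, where $\varphi_0\eqdef (\gamma|_V)^{-1}\circ \Ad(\varphi)\circ \gamma|_V$, and then to check bijectivity and compatibility with composition.

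First I would verify that the map is well-defined. Given $\varphi\in \L_\gamma$, the condition $\Ad(\varphi)(W)=W$ together with the weak-faithfulness of $\gamma$ means that $\Ad(\varphi)$ restricted to $W=\gamma(V)$ transports under the $\R$-linear isomorphism $\gamma|_V:V\stackrel{\sim}{\rightarrow}W$ to an $\R$-linear automorphism $\varphi_0$ of $V$. That $\varphi_0$ is an isometry of $(V,h)$ follows because $\gamma$ transports $h$ to the induced form $\mu$ on $W$ and $\Ad(\varphi)$ preserves the Clifford relation (indeed, for $w\in W$, $(\Ad(\varphi)(w))^2=\Ad(\varphi)(w^2)=\Ad(\varphi)(\mu(w,w)\id_S)=\mu(w,w)\id_S$, so $\mu$ is preserved on $W$, hence $h$ is preserved on $V$). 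By construction, $\gamma\circ \varphi_0=\Ad(\varphi)\circ \gamma|_V$, which is exactly the intertwining condition, so $(\varphi_0,\varphi)$ is indeed an automorphism of $\gamma$ in $\mClRep_w$.

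Next I would observe that the map is a bijection. Injectivity is immediate since the assignment already recovers $\varphi$ as the second component. Surjectivity follows from Proposition \ref{prop:AdRelation} applied to $\gamma'=\gamma$: any automorphism $(\varphi_0,\varphi)$ of $\gamma$ satisfies $\Ad(\varphi)(\gamma(V))=\gamma(V)$, so $\varphi\in \L_\gamma$, and $\varphi_0$ is forced to equal $(\gamma|_V)^{-1}\circ \Ad(\varphi)\circ \gamma|_V$.

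Finally, I would check that the map is a group homomorphism. Composition in $\mClRep_w$ is defined componentwise, so $(\varphi_0',\varphi')\circ(\varphi_0,\varphi)=(\varphi_0'\circ\varphi_0,\varphi'\circ\varphi)$, whose second component is precisely the product $\varphi'\circ\varphi$ in $\L_\gamma\subset \Aut_\C(S)$. The identity of $\Aut_{\mClRep_w}(\gamma)$ is $(\id_V,\id_S)$, which maps to $\id_S\in \L_\gamma$. Hence the assignment is an isomorphism of groups, canonically associated to $\gamma$. The dependence only on the isomorphism class of $\gamma$ in $\mClRep_w$ then follows formally: an isomorphism $\gamma\stackrel{\sim}{\rightarrow}\gamma'$ induces a conjugation isomorphism between the automorphism groups on either side, which under the identification with Lipschitz groups corresponds to the obvious conjugation isomorphism $\L_\gamma\stackrel{\sim}{\rightarrow}\L_{\gamma'}$. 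There is no real obstacle here beyond correctly unwinding the definitions; the only mildly delicate point is checking that $\varphi_0$ obtained from $\varphi\in \L_\gamma$ is actually an isometry rather than merely a linear automorphism, which is handled by the Clifford-relation argument above.
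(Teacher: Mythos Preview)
Your proof is correct and follows essentially the same route as the paper: both construct the bijection between $\L_\gamma$ and $\Aut_{\mClRep_w}(\gamma)$ via $\varphi\leftrightarrow(\varphi_0,\varphi)$ with $\varphi_0=(\gamma|_V)^{-1}\circ\Ad(\varphi)\circ\gamma|_V$, invoking Proposition~\ref{prop:AdRelation} (or equivalently Proposition~\ref{prop:Ad}) for surjectivity. The only differences are cosmetic: you supply the Clifford-relation argument for $\varphi_0\in\O(V,h)$ explicitly (the paper absorbs this into the cited Proposition~\ref{prop:AdRelation}), and you spell out the homomorphism check and the invariance under isomorphism, which the paper leaves implicit.
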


\begin{proof}
Any $\varphi\in \L_\gamma$ induces an invertible isometry $\varphi_0\in
\O(V,h)$ through relation \eqref{AdRelation}, namely:
\begin{equation}
\label{a0}
\varphi_0=(\gamma|_V)^{-1}\circ \Ad(\varphi)\circ (\gamma|_V)\in \O(V,h)\, ,
\end{equation}
which implies:
\begin{equation*}
\label{AdCl}
\gamma \circ \Cl(\varphi_0) =\Ad(\varphi)\circ \gamma\, .
\end{equation*}
Thus $(\varphi_0,\varphi)$ is the unique automorphism of $\gamma$ in
the category $\mClRep_w$ whose second component equals
$\varphi$. Conversely, we have $\varphi\in \L_\gamma$ for any
$(\varphi_0,\varphi)\in \Aut_{\mClRep_w}(\gamma)$ (see Proposition
\ref{prop:Ad}) and $\varphi_0$ is determined by $\varphi$ through
relation \eqref{a0} (see Proposition \ref{prop:AdRelation}). Hence the
map $F\colon \Aut_{\mClRep}(\gamma)\rightarrow \L_\gamma$ given by
$F(\varphi_0,\varphi)\eqdef \varphi$ is an isomorphism of groups which
allows us to identify $\L_\gamma$ with $\Aut_{\mClRep}(\gamma)$.
\end{proof}

\section{Elementary complex Lipschitz groups}

\begin{definition}
The complex Lipschitz group $\L_\gamma$ of an irreducible complex
Clifford representation is called an {\em elementary complex Lipschitz
  group}.
\end{definition}

Let $(V,h)$ be a real quadratic space of signature $(p,q)$ and
dimension $d=p+q$. When $d$ is even, all irreducible complex Clifford
representation $\gamma:\Cl(V,h)\rightarrow \End_\C(S)$ are mutually
$\C$-equivalent and faithful (these are called {\em Dirac
  representations} in \cite{FriedrichTrautman}). When $d$ is odd,
there exist up to $\C$-equivalence exactly two complex Clifford
representations $\gamma:\Cl(V,h)\rightarrow \End_\C(S)$, none of which
is faithful (these are called {\em Pauli representations} in
op. cit.). As we shall see below, both of these representations are
weakly-faithful. Moreover, they are unbasedly isomorphic in the
category $\mClRep_w$. In particular, the category $\mClRep_w$ contains
a single unbased isomorphism class of irreducible complex Clifford
representations of any given quadratic space $(V,h)$ (and this
isomorphism class is uniquely determined by the dimension and
signature of $(V,h)$).

\subsection{The case when $d$ is even}

In this case, there exists a single $\C$-equivalence class of
irreducible complex Clifford representations
$\gamma:\Cl(V,h)\rightarrow \End_\C(S)$ and any such representation is
faithful and satisfies $\dim_\C S=2^{\frac{d}{2}}$ (being a ``Dirac
representation''); in fact, the map $\gamma$ is bijective.  The
elementary complex Lipschitz group of such representations was
determined in \cite[Theorem 1]{FriedrichTrautman}:

\begin{prop}\cite{FriedrichTrautman}
When $d$ is even, we have $\L_\gamma=\Pin(V,h)\cdot \C^\times\simeq \Gamma(V,h)$,
which is homotopy-equivalent with $\Pin^{c}(V,h)$.
\end{prop}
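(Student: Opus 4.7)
The plan is to exploit that in even dimension $d=p+q$ the complexified Clifford algebra $\mCl(V,h)$ is central simple and isomorphic to $\End_\C(S)$ via $\gamma_\C$, so that $\L_\gamma$ can be computed inside $\mCl(V,h)^\times$ and recognized as $\Gamma(V,h)$.

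More precisely, since $\gamma$ is irreducible with $\dim_\C S=2^{d/2}$ and $\mCl(V,h)$ is simple of $\C$-dimension $2^d$, the unital algebra morphism $\gamma_\C\colon \mCl(V,h)\to \End_\C(S)$ is injective out of a simple algebra and, by dimension count, a $\C$-algebra isomorphism. It therefore induces a group isomorphism $\mCl(V,h)^\times\stackrel{\sim}{\to}\Aut_\C(S)$ which intertwines inner automorphisms. Since $W=\gamma_\C(V)$, the condition $\Ad(\varphi)(W)=W$ with $\varphi=\gamma_\C(x)$ translates to $xVx^{-1}=V$, so $\gamma_\C$ identifies $\L_\gamma$ with the untwisted Clifford-type group
\[
\Gamma'(V,h)\eqdef\{x\in \mCl(V,h)^\times \mid xVx^{-1}=V\}.
\]

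The key step is to show $\Gamma'(V,h)=\Gamma(V,h)$. Given $x\in \Gamma'(V,h)$, applying the algebra automorphism $\pi$ to the relation $xvx^{-1}\in V$ and using $\pi|_V=-\id_V$ yields $\pi(x)v\pi(x)^{-1}=xvx^{-1}$ for every $v\in V$, so $x^{-1}\pi(x)$ centralizes $V$. Because $V$ generates $\mCl(V,h)$ as an algebra and the center of $\mCl(V,h)$ equals $\C$ in even dimension, this forces $\pi(x)=\lambda x$ for some $\lambda\in \C^\times$; iterating $\pi$ gives $\lambda^2=1$, so $x$ is homogeneous. For homogeneous $x$ one has $\tAd(x)|_V=\pm\Ad(x)|_V$, so $x$ preserves $V$ under $\tAd$ iff it does under $\Ad$; this establishes $\Gamma'(V,h)=\Gamma(V,h)$.

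From this identification the remaining assertions follow from material already in the paper. The Theorem after Proposition~\ref{prop:Cliffordgrouphomotopy} gives the exact sequence $1\to \C^\times \to \Gamma(V,h)\stackrel{\tAd}{\to}\O(V,h)\to 1$; combined with the Cartan-Dieudonn\'e surjection $\Pin(V,h)\to \O(V,h)$ and the elementary observation $\Pin(V,h)\cap \C^\times=\{\pm 1\}$, a standard diagram chase yields $\Gamma(V,h)=\Pin(V,h)\cdot \C^\times$. The homotopy equivalence $\L_\gamma\simeq \Gamma(V,h)\simeq \Pin^c(V,h)$ is then Proposition~\ref{prop:Cliffordgrouphomotopy}. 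The principal obstacle is the homogeneity argument, which uses the even-dimensional hypothesis essentially through the equality of the center of $\mCl(V,h)$ with $\C$; in odd dimension the center is larger (it contains the complex volume element) and $\Gamma'(V,h)=\Gamma(V,h)$ fails, which is consistent with Pauli representations being weakly-faithful but not faithful.
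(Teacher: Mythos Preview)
Your argument is correct. The paper does not give its own proof of this proposition; it is simply cited from \cite{FriedrichTrautman}, so there is nothing in the paper to compare against. Your route---transporting $\L_\gamma$ into $\mCl(V,h)^\times$ via the isomorphism $\gamma_\C$, showing that the untwisted group $\Gamma'(V,h)=\{x\mid \Ad(x)(V)=V\}$ coincides with $\Gamma(V,h)$ by a homogeneity/center argument, and then invoking Proposition~\ref{prop:Cliffordgrouphomotopy}---is the standard one and is essentially how Friedrich and Trautman proceed.

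One small point: your homogeneity argument, as written, literally shows only that every $x\in\Gamma'(V,h)$ is homogeneous, giving $\Gamma'(V,h)\subseteq\Gamma(V,h)$. For the reverse inclusion you also need that every $x\in\Gamma(V,h)$ is homogeneous. This is of course classical (ABS), and the same style of argument works: from $\pi(x)vx^{-1}\in V$ one applies $\pi$ to obtain $xv\pi(x)^{-1}=\pi(x)vx^{-1}$, whence $(\pi(x)^{-1}x)v(\pi(x)^{-1}x)=v$ for all $v\in V$; squaring shows $(\pi(x)^{-1}x)^2$ is central, and a short computation with the grading then forces $\pi(x)^{-1}x\in\C^\times$ and hence $\pi(x)=\pm x$. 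You may wish to add a sentence to make the two-sided inclusion explicit.
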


\subsection{The case when $d$ is odd}

When $d$ is odd, an irreducible complex representation of $\Cl(V,h)$
is non-faithful of dimension $\dim_\C S=2^{\frac{d-1}{2}}$. Up to
$\C$-equivalence, there exist two such representations $\gamma^+$ and
$\gamma^-$ (called ``Pauli representations'' in
\cite{FriedrichTrautman}), which can be described as follows. Since
$d$ is odd, the Clifford volume element $\nu\in \Cl(V,h)$ determined
by some fixed orientation of $V$ is central in $\Cl(V,h)$ and
satisfies $\nu^2=\sigma_{p,q}\eqdef (-1)^{\frac{p-q-1}{2}}$.  Since
$\mCl(V,h)$ is generated by $V$ over $\C$, the real volume element is
also central in $\mCl(V,h)$, where it can be rescaled to a {\em
  complex volume element}\footnote{Notice that $\nu_\C$ belongs to
  $\Cl(V,h)$ iff $\sigma_{p,q}=+1$ (i.e. if and only if $p-q\equiv_8
  1,5$), in which case we have $\nu_\C=\pm \nu$.}  $\nu_\C\eqdef
\lambda \nu\in \mCl(V,h)$, where $\lambda$ is one of the two complex
square roots of $\sigma_{p,q}$. The complex volume element is central
in $\mCl(V,h)$ and satisfies $\nu^{2}_{\C} = 1$. Since $\nu_{\C}$ is
central and squares to $+1$, we can decompose $\mCl(V,h)$ as a direct
sum of two-sided ideals:
\begin{equation*}
\mCl(V,h) = \frac{1}{2}(1+\nu_{\C})\mCl(V,h)\oplus \frac{1}{2}(1-\nu_{\C})\mCl(V,h)\, ,
\end{equation*}
which are unital $\C$ algebras of dimension $2^{d-1}$, with units
given respectively by $\frac{1}{2}(1 + \nu_{\C})$ and $\frac{1}{2}(1 -
\nu_{\C})$ (note however that they are not unital subalgebras of
$\mCl(V,h)$). There exist two representations $\gamma_\C^+$ and
$\gamma_\C^-$ of $\mCl(V,h)$ which can be realized on the same
$\C$-vector space $S$ and are distinguished by their value on
$\nu_\C$:
\begin{equation*}
\gamma_\C^\pm(\nu_{\C}) = \pm \id_S~~.
\end{equation*}
The kernel of $\gamma_\C^\pm$ is given by:
\begin{equation}
\label{ker}
\ker\, \gamma_\C^\pm = \frac{1}{2}(1\mp \nu_{\C})\mCl(V,h)~~.
\end{equation}
Restricting $\gamma_\C^\pm$ gives unital isomorphisms of $\C$-algebras:
\begin{equation*}
\gamma_\C^\pm:\frac{1}{2}(1\pm \nu_{\C})\mCl(V,h) \stackrel{\sim}{\rightarrow}\End_{\C}(S)~~.
\end{equation*}
We have $\mCl(V,h)=\Cl(V,h)\oplus \i \Cl(V,h)$, where $\i$ denotes the
imaginary unit. The irreducible complex representations $\gamma^\pm$
are now obtained by restricting $\gamma_\C^\pm$ to $\Cl(V,h)$:
\be
\gamma^\pm\eqdef \gamma_\C^\pm|_{\Cl(V,h)}~~.
\ee

\begin{prop}
\label{prop:wfaithful}
The Pauli representations $\gamma^\pm$ are weakly-faithful and
isomorphic in the category $\mClRep_w$.
\end{prop}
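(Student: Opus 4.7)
My plan is to first establish weakly-faithfulness directly from the explicit description of $\ker\gamma^\pm_\C$ recalled in \eqref{ker}, then exhibit an unbased isomorphism $\gamma^+\to\gamma^-$ whose base component is $-\id_V$. For the first step, suppose $v\in V$ satisfies $\gamma^\pm(v)=0$. Since $\gamma^\pm=\gamma^\pm_\C|_{\Cl(V,h)}$, relation \eqref{ker} gives $\tfrac{1}{2}(1\pm\nu_\C)v=0$, that is $v\pm\nu_\C v=0$. The key observation is a parity mismatch in the $\Z_2$-grading of $\mCl(V,h)$: the element $v\in V$ is odd, while $\nu$ is a product of $d$ vectors with $d$ odd and hence also odd, so $\nu_\C v=\lambda\nu v$ is even. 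Projecting the relation $v\pm\nu_\C v=0$ onto the odd component of $\mCl(V,h)$ therefore forces $v=0$, establishing weakly-faithfulness of both $\gamma^+$ and $\gamma^-$.

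For the isomorphism, I take $\varphi_0\eqdef -\id_V\in\O(V,h)$, so that $\Cl(\varphi_0)$ coincides with the parity involution $\pi$ of $\Cl(V,h)$. The main claim is that the based representations $\gamma^+$ and $\gamma^-\circ\pi$ of $\Cl(V,h)$ on $S$ are equivalent. Both are complex-irreducible of dimension $2^{(d-1)/2}$, so by Schur's lemma applied to the central element $\nu$ (which must act as a scalar squaring to $\sigma_{p,q}$), equivalence reduces to checking agreement on $\nu$. Using $\gamma^\pm_\C(\nu_\C)=\pm\id_S$, $\nu_\C=\lambda\nu$ and $\pi(\nu)=(-1)^d\nu=-\nu$, a direct computation gives $\gamma^+(\nu)=\lambda^{-1}\id_S$ and $\gamma^-(\pi(\nu))=-\gamma^-(\nu)=\lambda^{-1}\id_S$. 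Hence there exists $\varphi\in\Aut_\C(S)$, unique up to nonzero scalar, with $\gamma^-(\pi(x))\circ\varphi=\varphi\circ\gamma^+(x)$ for all $x\in\Cl(V,h)$, which is precisely condition~3 of Definition~\ref{DefClMf} for the pair $(\varphi_0,\varphi)=(-\id_V,\varphi)$; since both components are bijective, this is the sought unbased isomorphism $\gamma^+\to\gamma^-$ in $\mClRep_w$.

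I do not foresee a genuine technical obstacle: the parity argument in step one is immediate once the $\Z_2$-grading is brought into play, and in step two the classification of complex-irreducible representations of $\Cl(V,h)$ in odd dimension by the scalar representing $\nu$ is already implicit in the decomposition $\mCl(V,h)=\tfrac{1}{2}(1+\nu_\C)\mCl(V,h)\oplus\tfrac{1}{2}(1-\nu_\C)\mCl(V,h)$ used just above the proposition. The only conceptual point worth stressing is that the desired isomorphism cannot be based, since $\gamma^+$ and $\gamma^-$ are inequivalent as traditional complex representations of $\Cl(V,h)$; the natural ``flip'' isometry $-\id_V\in\O(V,h)$ does the job precisely because $\pi$ negates $\nu$ in odd dimension.
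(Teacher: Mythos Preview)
Your proof is correct and follows essentially the paper's strategy, with a minor variation in the second step. For weak faithfulness, both you and the paper exploit a degree/parity mismatch: you use the $\Z_2$-grading directly (noting $v$ is odd while $\nu_\C v$ is even), while the paper invokes the Chevalley--Riesz identification with $\wedge V^\vee$ to see that $x\nu_\C\in\wedge^{d-1}V^\vee_\C$ for $x\in V$; these are the same observation in different clothing.

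For the isomorphism, the paper takes the shortcut of asserting the ``well-known'' identity $\gamma^- = \gamma^+\circ\pi$ on the nose (with $\gamma^\pm$ realized on the same space $S$), which immediately yields the explicit isomorphism $(-\id_V,\id_S)$ with $\varphi=\id_S$. You instead check that $\gamma^+$ and $\gamma^-\circ\pi$ agree on $\nu$ and appeal to the classification of complex irreducibles in odd dimension to obtain an abstract intertwiner $\varphi$. Your route is slightly longer but more self-contained, since it does not rely on any particular normalization of $\gamma^-$ relative to $\gamma^+$; the paper's route is quicker but tacitly uses the convention that $\gamma^\pm$ are chosen compatibly on $S$.
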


\begin{proof}
The restriction $\gamma^\pm|_V=\gamma_\C^\pm|_V$ is injective iff
$V\cap \ker \gamma_\C =0$. The Chevalley-Crumeyrolle-Riesz isomorphism
identifies $\Cl(V,h)$ with $\wedge V^\vee$. Using this identification,
we have $x\,\nu_{\C} \in \wedge^{d-1} V^{\vee}_{\C}$ for any vector
$x\in V$. Relation \eqref{ker} shows that one can have $\ker
\gamma_\C^\pm\cap V\neq 0$ only when $d=2$, which is disallowed since
$d$ is odd. This shows that we must have $\ker\,\gamma_\C^\pm\cap V =
0$, which implies $\ker\,\gamma^\pm\cap V = 0$. This shows that
$\gamma^\pm$ are weakly-faithful.

It is well-known that $\gamma^-=\gamma^+\circ \pi$, where $\pi$ is the
parity involution of $\Cl(V,h)$.  Since $\pi=\Cl(-\id_V)$ is the
automorphism of $\Cl(V,h)$ which is induced by minus the identity map
of $V$ (which is an isometry of $V$), it follows that $\gamma^+$ and
$\gamma^-$ are isomorphic in the category $\mClRep_w$, being related
by the involutive unbased isomorphism:
\be
(-\id_V,\id_S):\gamma^+\stackrel{\sim}{\rightarrow} \gamma^-~~.
\ee
\end{proof} 

\begin{lemma}
\label{lemma:isoeven}	
When $d$ is odd, the restriction of $\gamma_\C^\pm$ gives a unital
isomorphism of associative $\C$-algebras from $\mCl^{+}(V,h)$ to
$\End_{\C}(S)$.
\end{lemma}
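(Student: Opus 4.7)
The plan is to establish bijectivity, since it is immediate that $\gamma_\C^\pm|_{\mCl^+(V,h)}$ is a unital morphism of $\C$-algebras (being the restriction of the unital $\C$-algebra morphism $\gamma_\C^\pm$ to the unital subalgebra $\mCl^+(V,h)\subset \mCl(V,h)$).

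First I would reduce to proving either injectivity or surjectivity by dimension count: $\dim_\C \mCl^+(V,h) = \tfrac{1}{2}\dim_\C \mCl(V,h) = 2^{d-1}$, while $\dim_\C \End_\C(S) = (2^{(d-1)/2})^2 = 2^{d-1}$, so a unital morphism between them is an isomorphism as soon as it is injective.

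Next I would exploit the central idempotents $e_\pm \eqdef \tfrac{1}{2}(1\pm \nu_\C)\in \mCl(V,h)$. Since $d$ is odd, $\nu$ is an odd element of $\Cl(V,h)$ and hence $\nu_\C$ is odd in $\mCl(V,h)$. Being odd and invertible, multiplication by $\nu_\C$ sends $\mCl^+(V,h)$ isomorphically onto $\mCl^-(V,h)$, giving the direct sum decomposition
\begin{equation*}
\mCl(V,h) = \mCl^+(V,h) \oplus \nu_\C\, \mCl^+(V,h)\, .
\end{equation*}
Using this, I would check that the $\C$-linear map
\begin{equation*}
\iota_\pm\colon \mCl^+(V,h) \longrightarrow e_\pm\, \mCl(V,h)\, , \qquad c\longmapsto e_\pm\, c
\end{equation*}
is a unital morphism of $\C$-algebras (the idempotent $e_\pm$ is central and acts as the unit on the target) and is bijective: surjectivity follows by writing any $a + \nu_\C b\in \mCl(V,h)$ (with $a,b\in \mCl^+(V,h)$) and computing $e_\pm(a+\nu_\C b) = e_\pm(a\pm b)$, and injectivity follows because $e_\pm c = 0$ for $c\in \mCl^+(V,h)$ forces $c = \mp \nu_\C c$, impossible unless $c=0$ by parity.

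Finally, composing $\iota_\pm$ with the unital $\C$-algebra isomorphism $\gamma_\C^\pm\colon e_\pm \mCl(V,h)\xrightarrow{\sim}\End_\C(S)$ recalled in the text, one obtains a unital $\C$-algebra isomorphism $\mCl^+(V,h)\xrightarrow{\sim}\End_\C(S)$. It remains to observe that this composition coincides with $\gamma_\C^\pm|_{\mCl^+(V,h)}$, which holds because $\gamma_\C^\pm(\nu_\C)=\pm\id_S$ gives
\begin{equation*}
\gamma_\C^\pm(e_\pm c) = \tfrac{1}{2}(\id_S \pm \gamma_\C^\pm(\nu_\C))\, \gamma_\C^\pm(c) = \gamma_\C^\pm(c)\, , \qquad \forall\, c\in \mCl^+(V,h)\, .
\end{equation*}
The main (minor) obstacle is justifying the parity of $\nu_\C$ and the resulting decomposition $\mCl = \mCl^+ \oplus \nu_\C\mCl^+$; everything else is a short algebraic manipulation with central idempotents.
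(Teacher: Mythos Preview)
Your proof is correct and follows essentially the same approach as the paper: both rely on the dimension count $\dim_\C \mCl^+(V,h)=\dim_\C \End_\C(S)=2^{d-1}$ together with the parity observation that, since $\nu_\C$ is odd when $d$ is odd, no nonzero even element $c$ can satisfy $c=\mp\nu_\C c$ (equivalently, no nonzero even element lies in $\ker\gamma_\C^\pm$). Your packaging via the explicit factorization $\gamma_\C^\pm|_{\mCl^+}=\gamma_\C^\pm\circ\iota_\pm$ is slightly more elaborate than the paper's direct argument, and makes your initial dimension-count reduction technically redundant, but the underlying idea is identical.
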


\begin{proof}
It is obvious that $\gamma_\C^\pm$ restricts to a unital morphism of
algebras from $\mCl^{+}(V,h)$ to $\End_\C(S)$. The equation
$x\,\nu_{\C} = \epsilon\, x$ has no solutions in $\mCl^{+}(V,h)$
because the left-hand side is odd while the right-hand side is even;
this shows that $\gamma_\C^\pm|_{\mCl^{+}(V,h)}$ is injective.  On the
other hand, we have $\dim_{\C}\, \mCl^{+}(V,h) =
\dim_{\C}\, \End_{\C}(S)=2^{d-1}$, so
$\gamma_\C^\pm|_{\mCl^{+}(V,h)}$ is bijective.
\end{proof}

\begin{prop}
\label{prop:Gammaodd}
Let $\gamma:\Cl(V,h)\rightarrow \End_\C(S)$ be an irreducible complex
Clifford representation such that $d=\dim_\R V$ is odd. Then
$\gamma_\C$ restricts to an isomorphism between
$\rGamma^{s}(V,h)\simeq \Spin^c(V,h)\cdot \C^\times$ and
$\L_\gamma$. In particular, the elementary complex Lipschitz group
$\L_\gamma$ is homotopy-equivalent with $\Spin^{c}(V,h)$.
\end{prop}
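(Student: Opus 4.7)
The plan is to show that $\gamma_\C$ restricts to a (topological) group isomorphism $F\colon \rGamma^s(V,h) \stackrel{\sim}{\to} \L_\gamma$; the homotopy equivalence $\L_\gamma \simeq \Spin^c(V,h)$ will then follow at once from Proposition \ref{prop:Cliffordgrouphomotopy}. By Lemma \ref{lemma:isoeven}, the restriction $\gamma_\C|_{\mCl^+(V,h)}$ is a unital $\C$-algebra isomorphism onto $\End_\C(S)$, hence induces a group isomorphism $F_0\colon \mCl^+(V,h)^\times \stackrel{\sim}{\to} \Aut_\C(S)$; both $F_0$ and $F_0^{-1}$ are continuous, being restrictions of $\C$-linear maps between finite-dimensional spaces.

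The inclusion $F_0(\rGamma^s(V,h)) \subseteq \L_\gamma$ is essentially formal: for $x \in \rGamma^s(V,h) \subseteq \mCl^+(V,h)^\times$ one has $\pi(x) = x$, so $\tAd(x)(y) = xyx^{-1}$ lies in $V$ for every $y \in V$, giving $\Ad(\gamma_\C(x))(\gamma(y)) = \gamma_\C(xyx^{-1}) \in W$. The converse is the main step, and rests on the observation that $\ker \gamma_\C \cap \mCl^-(V,h) = 0$. Indeed, since $d$ is odd the complex volume element $\nu_\C$ lies in $\mCl^-(V,h)$ and satisfies $\nu_\C^2 = 1$, so left multiplication by $\nu_\C$ is a $\C$-linear bijection $\mCl^+(V,h) \to \mCl^-(V,h)$; combined with $\gamma_\C(\nu_\C) = \pm \id_S$ and Lemma \ref{lemma:isoeven}, this shows that $\gamma_\C|_{\mCl^-(V,h)}$ is also an isomorphism onto $\End_\C(S)$, in particular injective. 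Now given $\varphi \in \L_\gamma$, set $x = F_0^{-1}(\varphi)$; for any $y \in V$ the element $xyx^{-1}$ is odd, and by hypothesis $\gamma_\C(xyx^{-1}) = \Ad(\varphi)(\gamma(y)) \in \gamma(V)$, so there exists $v \in V$ with $\gamma(v) = \gamma_\C(xyx^{-1})$; then $v - xyx^{-1} \in \ker\gamma_\C \cap \mCl^-(V,h) = 0$, i.e.\ $xyx^{-1} = v \in V$, whence $x \in \rGamma(V,h) \cap \mCl^+(V,h) = \rGamma^s(V,h)$.

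Combining the two inclusions, $F := F_0|_{\rGamma^s(V,h)}$ is a topological group isomorphism onto $\L_\gamma$, and Proposition \ref{prop:Cliffordgrouphomotopy} then yields the homotopy equivalence with $\Spin^c(V,h)$. The main obstacle is the key vanishing $\ker\gamma_\C \cap \mCl^-(V,h) = 0$, which genuinely uses both the odd parity of $\nu_\C$ (only available when $d$ is odd) and Lemma \ref{lemma:isoeven}; without it one could not lift a Lipschitz element back into the complex Clifford group $\rGamma(V,h)$.
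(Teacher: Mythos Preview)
Your proof is correct and follows the same line as the paper: both invoke Lemma~\ref{lemma:isoeven} to identify $\mCl^+(V,h)^\times$ with $\Aut_\C(S)$ via $\gamma_\C$, check that this carries $\rGamma^s(V,h)$ onto $\L_\gamma$, and then appeal to Proposition~\ref{prop:Cliffordgrouphomotopy} for the homotopy equivalence with $\Spin^c(V,h)$. The paper's argument is terse and simply asserts that the restriction is an isomorphism onto $\L_\gamma$; your explicit verification of both inclusions---in particular the observation that $\ker\gamma_\C\cap\mCl^-(V,h)=0$ (via multiplication by the odd central idempotent-like element $\nu_\C$), which is what allows you to pull an arbitrary $\varphi\in\L_\gamma$ back into $\rGamma^s(V,h)$---supplies the detail the paper leaves implicit.
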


\begin{proof}
From Lemma \ref{lemma:isoeven} we have a unital isomorphism of
algebras $\gamma_{\C}:\mCl^{+}(V,h)\rightarrow \End_{\C}(S)$ which
restricts to an isomorphism of groups $\gamma_\C:
\mCl^{+}(V,h)^{\times}\rightarrow \Aut_{\C}(S)$.  In turn, this
further restricts to an isomorphism of groups:
\begin{equation*}
\gamma_\C\colon \Gamma^s(V,h) \xrightarrow{\sim} \L_\gamma\, .
\end{equation*}
The fact that $\L_\gamma$ is homotopy-equivalent to $\Spin^{c}(V,h)$
now follows from Proposition \ref{prop:Cliffordgrouphomotopy}.
\end{proof} 

Since for odd $d$ and complex irreducible $\gamma$ the kernel of
$\Ad_{\gamma}$ is given by the complex multiples of the
identity, the previous proposition gives the short exact sequence:
\begin{equation*}
1\to \C^\times \to \L_\gamma \xrightarrow{\Ad_{\gamma}} \SO(V,h) \to 1~~.
\end{equation*}

\begin{remark}
For odd $d$, reference \cite{FriedrichTrautman} determines the {\em
  non-elementary} complex Lipschitz group of the faithful but
reducible complex Cartan representation of $\Cl(V,h)$. As shown in
\cite{FriedrichTrautman}, that complex Lipschitz group is isomorphic
with $[\Pin(V,h)\rtimes (\C^\times)^2]/\Z_2$, where the semidirect
product in the numerator is determined explicitly in op. cit.
\end{remark}

\subsection{Realification of Clifford representations}
\label{subsec:realif}

Let $(V,h)$ be a real quadratic space of signature $(p,q)$ and
dimension $d=p+q$.  For $\K\in \{\R,\C\}$, let $\Rep_\K(\Cl(V,h))$
denote the ordinary category of $\K$-linear representations of
$\Cl(V,h)$ (whose morphisms are the {\em based} morphisms of
representations).  Let $\Rep_\K^w(\Cl(V,h))$ denote the full
sub-category of $\Rep_\K(\Cl(V,h))$ whose objects are the
weakly-faithful representations. Given a complex Clifford
representation $\gamma:\Cl(V,h)\rightarrow \End_\C(S)$, let $S_\R$ be
the underlying real vector space of $S$ and
$\gamma_\R:\Cl(V,h)\rightarrow \End_\R(S_\R)$ be the {\em
  realification} of $\gamma$, i.e. the real Clifford representation
obtained from $\gamma$ by forgetting the complex structure of
$S$. Given two complex Clifford representations
$\gamma_i:\Cl(V_i,h_i)\rightarrow \End_\C(S_i)$ (with $i=1,2$) and a
based morphism of complex representations $\varphi:\gamma_1\rightarrow
\gamma_2$, let $\varphi_\R:\gamma_{1\R}\rightarrow \gamma_{2\R}$ be
the based morphism of real representations obtained by forgetting the
complex structures of $S_1$ and $S_2$.

\begin{definition}
The {\em realification functor} $R:\Rep_\C(\Cl(V,h))\rightarrow
\Rep_\R(\Cl(V,h))$ is the functor which maps a complex representation
$\gamma:\Cl(V,h)\rightarrow \End_\C(S)$ to the real representation
$R(\gamma)\eqdef \gamma_\R:\Cl(V,h)\rightarrow \End_\R(S_\R)$ and a
based morphism $\varphi:\gamma_1\rightarrow \gamma_2$ of complex
representations to the based morphism of real representations
$\mathrm{R}(\varphi)\eqdef \varphi_\R:\gamma_{1\R}\rightarrow \gamma_{2\R}$.
\end{definition}

\

\noindent It is clear that $\mathrm{R}$ is faithful and that it maps
$\Rep_\C^w(\Cl(V,h))$ to $\Rep_\R^w(\Cl(V,h))$.

\begin{prop}
Let $\gamma:\Cl(V,h)\rightarrow \End_\C(S)$ be an irreducible complex
Clifford representation, where $(V,h)$ has signature $(p,q)$ and
dimension $d=p+q$.  Then the realification $\gamma_\R$ of $\gamma$ is
$\R$-irreducible iff $p-q\equiv_8 3,4,6,7$.
\end{prop}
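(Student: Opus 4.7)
The strategy is to prove this by a real-dimension count, combining the standard structure of $\mCl(V,h)$ with the Atiyah--Bott--Shapiro classification of real Clifford algebras. The guiding principle is that, since $\Cl(V,h)$ is a finite-dimensional semisimple real algebra, the realification $\gamma_\R$ splits as a direct sum of real irreducibles; hence $\gamma_\R$ is $\R$-irreducible if and only if $\dim_\R S_\R$ equals the real dimension $\delta(p,q)$ of an irreducible real $\Cl(V,h)$-module.

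First I would compute $\dim_\R S_\R = 2\dim_\C S$. Using that $\mCl(V,h) \simeq M_{2^{d/2}}(\C)$ for $d$ even and $\mCl(V,h) \simeq M_{2^{(d-1)/2}}(\C) \oplus M_{2^{(d-1)/2}}(\C)$ for $d$ odd (both standard and already used in Lemma \ref{lemma:isoeven}), complex irreducibility of $\gamma$ yields $\dim_\C S = 2^{\lfloor d/2 \rfloor}$, whence $\dim_\R S_\R = 2^{\lfloor d/2 \rfloor + 1}$.

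Next I would invoke the Atiyah--Bott--Shapiro table \cite{ABS}, which gives $\delta(p,q) = 2^{d/2}$ for $p-q \equiv_8 0, 2$; $\delta(p,q) = 2^{(d-1)/2}$ for $p-q \equiv_8 1$; $\delta(p,q) = 2^{(d+1)/2}$ for $p-q \equiv_8 3, 5, 7$; and $\delta(p,q) = 2^{(d+2)/2}$ for $p-q \equiv_8 4, 6$. Direct comparison with $\dim_\R S_\R = 2^{\lfloor d/2 \rfloor + 1}$ yields equality in the required signatures $p-q \equiv_8 3, 4, 6, 7$, whereas in the signatures $p-q \equiv_8 0, 1, 2$ one has $\dim_\R S_\R = 2\,\delta(p,q) > \delta(p,q)$, which forces $\gamma_\R$ to decompose non-trivially. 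This settles the ``only if'' direction for the three excluded signatures via a strict dimension inequality.

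The step I expect to be the principal obstacle is the boundary signature $p-q \equiv_8 5$, where the real dimensions of $\gamma_\R$ and of a real-irreducible $\Cl_{p,q}$-module coincide, so dimension alone is insufficient. Here one has to analyze how $\gamma_\R$ sits inside the splitting $\Cl_{p,q} \simeq M_n(\H) \oplus M_n(\H)$ with $n = 2^{(d-3)/2}$, using the action of the central volume element $\nu$ (which is real when $\sigma_{p,q}=1$) and the central idempotents $(1 \pm \nu)/2$ to compare $\gamma_\R$ with the two real irreducibles distinguished by the sign of $\nu$. The cases $p-q\equiv_8 3,4,6,7$ conversely follow immediately from the dimension match together with semisimplicity of $\Cl(V,h)$.
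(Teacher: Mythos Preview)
Your approach is essentially the paper's: both compute $\dim_\R S_\R = 2^{\lfloor d/2\rfloor+1}$ and compare against the dimension of a real irreducible module, the paper by citing Table~4 of \cite{fierz} and you by quoting the Atiyah--Bott--Shapiro table directly. For $p-q\equiv_8 0,1,2$ (strict inequality forces reducibility) and for $p-q\equiv_8 3,4,6,7$ (equality plus semisimplicity gives irreducibility), your argument is complete and coincides with the paper's reasoning.

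You are right, and more careful than the paper's proof, in isolating $p-q\equiv_8 5$ as the one signature where the dimensions coincide yet the proposition asserts reducibility; the paper does not flag this case separately but simply defers to the cited table. However, your proposed finer analysis will not produce the claimed reducibility. When $p-q\equiv_8 5$ one has $\sigma_{p,q}=+1$, so $\nu$ is a real central element with $\nu^2=1$ and the idempotents $\tfrac12(1\pm\nu)$ already lie in $\Cl(V,h)$, splitting it as $M_n(\H)\oplus M_n(\H)$ with $n=2^{(d-3)/2}$. Since $\gamma^\pm(\nu)=\pm\id_S$, the realification $\gamma^\pm_\R$ factors through a single simple summand $M_n(\H)$ acting on a real space of dimension $4n=\dim_\R\H^n$; by Wedderburn this is the unique simple $M_n(\H)$-module, hence \emph{irreducible}. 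In other words, the very mechanism you propose shows $\gamma_\R$ is $\R$-irreducible when $p-q\equiv_8 5$. This is a genuine obstruction: no refinement of your dimension-plus-idempotent argument can establish reducibility in this signature, and you should re-examine whether the exclusion of $5$ from the statement is actually correct before trying to prove it.
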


\begin{proof}
Distinguish the cases:
\begin{enumerate}[1.]
\itemsep 0.0em
\item When $d$ is even. Then $\gamma$ is a Dirac representation and we
  have $\dim_\C S=2^{\frac{d}{2}}$. Thus $\dim_\R
  S_\R=2^{\frac{d}{2}+1}$. Comparing with Table 4 of
  \cite[Sec. 2.4]{fierz}, we see that $\gamma_\R$ is irreducible iff
  $p-q\equiv_9 4,6$ (which corresponds to the ``quaternionic simple
  case'' of op. cit.). 
\item When $d$ is odd. Then we can take $\gamma$ to be one of the
  Pauli representations $\gamma^\pm$, both of which satisfy $\dim_\C
  S=2^{\left[\frac{d}{2}\right]}$. Thus $\dim_\R
  S_\R=2^{\left[\frac{d}{2}\right]+1}$. Comparing with Table 4 of
  \cite[Sec. 2.4]{fierz}, we see that $\gamma_\R$ is irreducible iff
  $p-q\equiv_9 3,7$ (which corresponds to the ``complex case'' of
  op. cit.).
\end{enumerate}
\end{proof}

\noindent Let $\Irrep_\K(\Cl(V,h))$ denote the full subcategory of
$\Rep_\K^w(\Cl(V,h))$ whose objects are the irreducible
representations. 

\begin{prop}
When $p-q\equiv_8 3,4, 6, 7$, the restriction
$R:\Irrep_\C(\Cl(V,h))^\times \rightarrow \Irrep_\R(\Cl(V,h))^\times$
of the realification functor is faithful and strictly surjective but
not full. Moreover:
\begin{enumerate}[1.]
\itemsep 0.0em
\item When $p-q\equiv_8 3,7$, the $R$-preimage of a real irreducible
  representation $\eta:\Cl(V,h)\rightarrow \End_\R(\Sigma)$ has
  cardinality two and consists of the $\C$-inequivalent complex Pauli
  representations of $\Cl(V,h)$ whose representation space is obtained
  by endowing $\Sigma$ with the complex structures $J_\pm=\pm
  \gamma(\nu)$, where $\nu$ is the Clifford volume form determined by
  some orientation of $V$.
\item When $p-q\equiv_8 4,6$, the $R$-preimage of a real irreducible
  representation $\eta:\Cl(V,h)\rightarrow \End_\R(\Sigma)$ is in
  bijection with the unit sphere $\rS^2$ and consists of the complex
  Dirac representations whose complex structures are the unit
  imaginary elements of the commutant of $\im(\eta)$ (which in these
  cases is a quaternion algebra).
\end{enumerate}
\end{prop}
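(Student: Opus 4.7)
The plan is to verify the three properties of $R$ in turn and then compute the preimage of a real irreducible representation. Faithfulness is immediate, since $R$ assigns to a based $\C$-linear morphism $\varphi$ the same underlying set-theoretic map (with complex linearity forgotten), so the assignment on $\Hom$-sets is injective.

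For strict surjectivity, let $\eta\colon \Cl(V,h)\to\End_\R(\Sigma)$ be a real irreducible representation. Under the hypothesis $p-q\equiv_8 3,4,6,7$, the standard classification (as recalled in \cite[Sec.~2.4]{fierz}) identifies the real commutant $C\eqdef \End_{\Cl(V,h)}(\Sigma)$ with $\C$ when $p-q\equiv_8 3,7$ and with $\H$ when $p-q\equiv_8 4,6$. In either case there exists $J\in C$ with $J^2=-\id_\Sigma$; letting $\i\in\C$ act on $\Sigma$ as $J$ turns $\eta$ into a complex representation $\gamma_J$ with $R(\gamma_J)=\eta$ on the nose, and $\C$-irreducibility of $\gamma_J$ follows from $\R$-irreducibility of $\eta$.

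Failure of fullness and the preimage description I will treat together. Since $R$ only forgets the complex structure, the $R$-preimage of $\eta$ consists of all $J\in C$ with $J^2=-\id_\Sigma$. For $p-q\equiv_8 3,7$, the algebra $C\simeq \C$ contains exactly two such elements $\pm J$, giving the two Pauli representations $\gamma^\pm$: indeed, $\sigma_{p,q}=(-1)^{(p-q-1)/2}=-1$ in these signatures, so $\nu^2=-1$ in $\Cl(V,h)$ and $\gamma(\nu)$ itself squares to $-\id_\Sigma$, while tracing through $\nu_\C=\lambda\nu$ (with $\lambda^2=-1$) and $\gamma_\C^\pm(\nu_\C)=\pm\id_S$ identifies $J_\pm=\pm\gamma(\nu)$ with the complex structures underlying $\gamma^\pm$. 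A direct computation inside $C\simeq \R\oplus\R J$ shows that the only $\varphi=a+bJ\in C$ satisfying $\varphi J=-J\varphi$ is $\varphi=0$, hence $\Hom_{\Irrep_\C(\Cl(V,h))^\times}(\gamma^+,\gamma^-)=\emptyset$, while $\id_\Sigma$ is a nontrivial element of $\Hom_{\Irrep_\R(\Cl(V,h))^\times}(\eta,\eta)=C^\times$, refuting fullness. For $p-q\equiv_8 4,6$, the elements of $C\simeq \H$ squaring to $-\id_\Sigma$ are exactly the unit imaginary quaternions, forming the sphere $\rS^2\subset\Im(\H)$; fixing any $J$ in this sphere and setting $\gamma=\gamma_J$, Schur's lemma identifies $\Aut_{\Irrep_\C(\Cl(V,h))^\times}(\gamma)\simeq \C^\times$ with the centralizer of $J$ inside $\H^\times\simeq \Aut_{\Irrep_\R(\Cl(V,h))^\times}(\eta)$, a proper subgroup, so fullness again fails.

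The main subtle point is the identification of $\pm\gamma(\nu)$ with the Pauli complex structures in the odd-dimensional case. This requires care with the sign conventions distinguishing $\gamma^+$ from $\gamma^-$ via the rescaled volume element $\nu_\C$, and then noting that $\gamma(\nu)$ acts as the same real endomorphism of $\Sigma$ regardless of which complex structure is used to view it as a complex representation.
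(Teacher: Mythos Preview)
Your proof is correct and follows the same strategy as the paper: both identify the $R$-preimage of $\eta$ with the set of complex structures lying in the Schur commutant $C=\End_{\Cl(V,h)}(\Sigma)$, which is $\C$ for $p-q\equiv_8 3,7$ and $\H$ for $p-q\equiv_8 4,6$. The one place where your argument diverges is the non-fullness check in the odd case: you argue directly that $\Hom_{\Irrep_\C^\times}(\gamma^+,\gamma^-)=\emptyset$ (since no nonzero element of the commutative algebra $C$ can anticommute with $J$) while $\Aut_{\Irrep_\R^\times}(\eta)=C^\times$ is nonempty; the paper instead decomposes $\Iso_{\Rep_\R}(\eta,\eta')$ into two pieces according to the sign $\epsilon_\psi$ in $\psi\circ J_\pm=\epsilon_\psi J'_\pm\circ\psi$ and identifies each piece as the $R$-image of an appropriate complex $\Iso$-set. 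The paper's version records slightly more structure (the full image of $R$ on morphisms), but your direct argument is entirely sufficient for the proposition as stated, and your treatment of the quaternionic case via the inclusion $\C^\times\subsetneq\H^\times$ of automorphism groups is in fact more explicit than the paper's, which simply asserts that non-fullness is immediate.
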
 

\begin{proof}
The fact that $R$ is faithful follows immediately from its definition. 
For the remaining statements, consider the cases:
\begin{enumerate}[1.]
\itemsep 0.0em
\item $p-q\equiv_8 3,7$. In this case, there exist exactly two complex
  structures $J_\pm$ on the $\R$-vector space $\Sigma$ which lie in
  the commutant of $\im (\eta)$ (see \cite[Section
    2.7]{fierz}). Namely, one has $J_\pm=\pm \gamma(\nu)$, where
  $\nu\in \Cl(V,h)$ is the Clifford element defined by a fixed
  orientation of $V$. Since $J_\pm$ commute with $\eta(x)$ for all
  $x\in \Cl(V,h)$, it follows that $\eta$ coincides with the
  realification of any of the complex representations
  $\gamma^\pm:\Cl(V,h)\rightarrow \End_\C(S^\pm)$ obtained from $\eta$
  by endowing $\Sigma$ with the complex structure $J_\pm$ to obtain a
  complex vector space $S^\pm$. Notice that $S^-$ coincides with the
  complex-conjugate of $S^+$, hence $\gamma^+$ and $\gamma^-$ are
  mutually inequivalent Pauli representations (the dimension $d=p+q$
  of $V$ is odd when $p-q\equiv_8 3,7$). We have $\Sigma=(S^\pm)_\R$
  and $\eta=(\gamma_\pm)_\R$, which shows that the restriction of $R$
  is (strictly) surjective. Moreover, the $R$-preimage of $\eta$
  consists of the two complex representations $\gamma_+$ and
  $\gamma_-$.

Let $\eta':\Cl(V,h)\rightarrow \End_\R(\Sigma')$ is a real irreducible
representation which is equivalent with $\gamma$ and $J'_\pm \eqdef
\pm \gamma'(\nu)$. Given a based isomorphism (equivalence of
representations) $\psi:\eta\stackrel{\sim}{\rightarrow}\eta'$, we have
$\psi\in \Hom_\R(\Sigma,\Sigma')$ and:
\be
\psi\circ \eta(x)=\eta'(x)\circ \psi ~~~\forall x\in \Cl(V,h)~~.
\ee
Since $J_\pm$ lies in the commutant of $\im (\eta)$, this relation
implies that the complex structure $\psi\circ J_\pm\circ
\psi^{-1}\in \End_\R(\Sigma')$ lies in the commutant of $\im(\eta')$
and hence that we must have $\psi\circ J_\pm\circ
\psi^{-1}=\epsilon_\psi J'_\pm$, i.e. $\psi\circ J_\pm=\epsilon_\psi
J'_\pm\circ \psi$ for some $\epsilon_\psi\in \{-1,1\}$. This give the
disjoint union decomposition:
\be
\Iso_{\Rep_\R(\Cl(V,h))}(\eta,\eta')=\Iso^+_{\Rep_\R(\Cl(V,h))}(\eta,\eta')\sqcup \Iso^-_{\Rep_\R(\Cl(V,h))}(\eta,\eta')~~,
\ee
where: 
\be
\Iso^\pm_{\Rep_\R(\Cl(V,h))}(\eta,\eta')\eqdef \{\psi\in \Iso_{\Rep_\R(\Cl(V,h))}(\eta,\eta')|\epsilon_\psi=\pm 1\}~~.
\ee
It is clear from the above that:
\ben
\label{Rimcx}
R(\Iso_{\Rep_\C(\Cl(V,h))}(\gamma^{\epsilon},\gamma'^{\epsilon'}))=\Iso^{\epsilon\epsilon'}_{\Rep_\R(\Cl(V,h))}(\eta,\eta')~~~\forall \epsilon,\epsilon'\in \{-1,1\}~~.
\een
In particular, each of the sets
$\Iso^\pm_{\Rep_\R(\Cl(V,h))}(\eta,\eta')$ is non-empty and relation
\eqref{Rimcx} shows that $R$ is not full.
\item  $p-q\equiv_8 4,6$. In this case, the commutant of $\im(\eta)$
  inside the $\R$-algebra $\End_\R(\Sigma)$ (which is called the {\em
    Schur algebra} of $\eta$) is a unital associative algebra
  isomorphic with the algebra $\H$ of quaternions. The set of those
  complex structures on $\Sigma$ which lie in the commutant of
  $\im(\gamma)$ corresponds to the unit imaginary quaternions, being
  in bijection with the unit two-sphere $\rS^2$. It is clear that
  $\eta$ coincides with the realification of any of the complex Dirac
  representations obtained from $\eta$ by endowing $\Sigma$ with one
  of these complex structures. This immediately implies that $\mathrm{R}$ is
  not full.
\end{enumerate}
\end{proof}

\subsection{Comparison of complex and real elementary Lipschitz groups when $p-q\equiv_8 3,4,6,7$}

For $p-q\equiv_8 3,7$, the elementary complex Lipschitz group of an
irreducible complex Clifford representation $\gamma$ is
homotopy-equivalent to $\Spin^c(V,h)$, while the reduced elementary
real Lipschitz group associated to $\gamma_{\R}$ is
homotopy-equivalent to $\Spin^{o}(V,h)$ \cite{Lipschitz}. The latter
group is $\Z_2$-graded with even component equal to the $\Spin^c(V,h)$
subgroup consisting of those elements of $\Spin^o(V,h)$ which are
$\C$-linear automorphisms of $S$:
\begin{equation*}
\Spin^c(V,h)\simeq \{a\in \Spin^{o}(V,h) | a\in \End_\C(S)\}\, . 
\end{equation*}
Unlike $\Spin^{c}(V,h)$, the reduced elementary real Lipschitz group
$\Spin^{o}(V,h)$ also contains $\R$-linear endomorphisms of $S$ which
are $\C$-antilinear; these form the odd component of $\Spin^{o}(V,h)$.

For $p-q\equiv_8 4,6$, the elementary complex Lipschitz group of an
irreducible complex Clifford representation $\gamma$ is
homotopy-equivalent to $\Pin^c(V,h)$, while the reduced elementary
real Lipschitz group associated to $\gamma_{\R}$ is
homotopy-equivalent to $\Pin^q(V,h)$ \cite{Lipschitz}.

\begin{remark} 
When $p-q\not\equiv_8 3,4, 6, 7$, the real Clifford representation
obtained by realification from an irreducible complex Clifford
representation is reducible. In that case, there is no simple relation
between real and complex elementary Lipschitz groups.
\end{remark}


\section{Complex pinor bundles and complex Lipschitz structures}
\label{sec:complexpinorbundles}


\subsection{Complex Lipschitz structures}

Let $(V,h)$ be a quadratic real vector space and $\eta\colon
\Cl(V,h)\to \End_{\mathbb{C}}(\Sigma)$ be a weakly-faithful complex
Clifford representation, where $\Sigma$ is a $\C$-vector space. We
denote by $\L_{\eta}$ the corresponding complex Lipschitz group and by
$\Ad_{\eta} \colon \L_\eta \to \O(V,h)$ its adjoint representation.

\begin{definition}
\label{def:spinostructures}
Let $M$ be a connected manifold and $P_{\O}$ be a
principal $\O(V,h)$-bundle over $M$. A {\em complex Lipschitz
  structure} of {\em type} $\eta$ on $P_{\O}$ is a pair $(Q,
\Lambda)$, where $Q$ is a principal $\L_{\eta}$-bundle over $M$ and
$\Lambda\colon Q\rightarrow P_{\O}$ is a bundle map fitting into the
following commutative diagram:
\begin{equation*}
\scalebox{1.0}{
\xymatrix{
Q\times \L_\eta\,\ar[d]_{\Lambda\times \Ad_\eta}\ar[r] ~ &~ Q\ar[d]^{\Lambda}\\
P_\O\times \O(V,h)\ar[r] & ~ P_\O\\
}}
\end{equation*}
where the horizontal arrows denote the right action of the group on
the corresponding bundle.
\end{definition}

\begin{definition}
\label{def:Lmf}
Let $(Q^1,\Lambda^1)$ and $(Q^2,\Lambda^2)$ be two complex Lipschitz
structures of type $\eta$ over $P_{\O}$. A {\em morphism of complex
  Lipschitz structures} from $(Q^1,\Lambda^1)$ to $(Q^2,\Lambda^2)$ is
a morphism $F\colon Q^{1}\to Q^{2}$ of principal $\L_{\eta}$-bundles
such that $\Lambda^{2}\circ F= \Lambda^{1}$, i.e. such that the
following diagram commutes:
\begin{equation*}
\scalebox{1.0}{
\xymatrix{
Q^{1}\,\ar[d]_{\Lambda^{1}}\ar[r]^{F} ~ & ~ Q^{2} \ar[d]^{\Lambda^{2}}\\
P_{\O} \ar[r]^{\mathrm{id}} ~ & ~ P_{\O}\\
}}
\end{equation*}
\end{definition}

\noindent
Let $(M,g)$ be a connected pseudo-Riemannian manifold. In the
following we take $P_{\O}$ to be the orthogonal coframe bundle
$P_{\O}(M,g)$ of $(M,g)$, so $(V,h)$ is an isometric model of any of
the quadratic spaces $(T_p^\ast M, g_p^\ast)$, where $p$ is a point of
$M$ and $g_p^\ast$ is the contragradient metric induced by $g$ on
$T_p^\ast M$. A complex Lipschitz structure on $P_\O(M,g)$ be called a
{\em complex Lipschitz structure on $(M,g)$}.
 
\begin{definition}
Let $\L_{\eta}(M,g)$ be the category whose objects are the
complex Lipschitz structures of type $\eta$ on $(M,g)$ and whose
arrows are morphisms of complex Lipschitz structures. We denote by
$\L_{\eta}(M,g)^\times$ the corresponding groupoid.
\end{definition}

\subsection{Reduced complex Lipschitz structures}

Let $\eta:\Cl(V,h)\rightarrow \End_\C(\Sigma)$ be a weakly-faithful
Clifford representation and $\L_\eta$ be the corresponding complex
Lipschitz group.

\begin{definition}
A {\em reduction} of $\L_\eta$ is a closed subgroup $\L^0_\eta\subset
\L_\eta$ such that there exists a surjective morphism of Lie groups
$r:\L_\eta\rightarrow \L^0_\eta$ which satisfies the conditions:
\begin{enumerate}
\item $r$ is a homotopy retraction of the inclusion map
  $\iota:\L^0_\eta\hookrightarrow \L_\eta$, i.e. we have $r\circ\iota
  = \id_{\L^0_{\eta}}$ and the map $\iota\circ r:\L_\eta\rightarrow
  \L_\eta$ is homotopic to $\id_{\L_\eta}$.
\item We have $\Ad_\eta\circ r=\Ad_\eta$, i.e. $r$ fits into the
  following commutative diagram:
\begin{equation*}
\xymatrix{
~& \L^0_{\eta} \ar@/^0.6pc/[dl]^{\iota} \ar[dr]^{\Ad_{\eta}\circ \iota} & \\
\L_{\eta} \ar[ur]^{r}  \ar[rr]_{\Ad_{\eta}} & & \O(V,h) 
}
\end{equation*}
\end{enumerate}
In this case, $r$ is called an {\em equivariant homotopy retraction}
of $\L_\eta$ onto $\L_\eta^0$.
\end{definition}

\begin{remark}
Since $r:\L_\eta\rightarrow \L_\eta^0$ is a retraction, the Lie group
$K_\eta\eqdef \ker r=r^{-1}(1)$ is contractible (indeed, the
restriction $r|_{K_\eta}:K_\eta\rightarrow 1$ is a homotopy retraction).
\end{remark}

\begin{example}
When $\dim_\R V$ is even, the group $\Pin^c(V,h)$ is a reduction of
the elementary complex Lipschitz group $\Pin(V,h)\cdot \C^\times
\simeq \rGamma(V,h)$, an equivariant homotopy retraction being given
in equation \eqref{r}. When $\dim_\R V$ is odd, the group
$\Spin^c(V,h)$ is a reduction of the elementary complex Lipschitz
group $\Spin(V,h)\cdot \C^\times \simeq \rGamma^s(V,h)$, an
equivariant homotopy retraction being given by $r|_{\Spin^c(V,h)}$.
\end{example}

Let $\L_\eta^0$ be a reduction of $\L_\eta$, with inclusion map
$\iota$ and equivariant retraction map $r$. Let $\Ad_\eta^0\eqdef
\Ad_\eta|_{\L_\eta^0}$ and $K_\eta\eqdef \ker r$. A {\em reduced
  complex Lipschitz structure} with structure group $\L_\eta^0$ is
defined as in Definition \ref{def:spinostructures}, but replacing
$\L_\eta$ with $\L_\eta^0$ and $\Ad_\eta$ with $\Ad_\eta^0$. A
morphism of reduced complex Lipschitz structures is defined similarly
to Definition \ref{def:Lmf}. With these definitions, reduced complex
Lipschitz structures with structure group $\L_\eta^0$ and defined on
$(M,g)$ form a category denoted $\L_\eta^0(M,g)$, whose unit groupoid
we denote by $\L_\eta^0(M,g)^\times$. 

The right-split short exact sequence of groups: 
\be
\xymatrix{ 
1 \ar[r] & K_\eta  \ar[r] & \L_\eta \ar[r]^r & \ar@/^0.6pc/[l]^{\iota}\L_\eta^0 \ar[r] & 1\\ 
}
\ee
induces mutually inverse isomorphisms in cohomology:
\be
\xymatrix{ 
H^1(M,\L_\eta) \ar[r]^{r_\ast} & \ar@/^0.6pc/[l]^{\iota_\ast}H^1(M,\L_\eta^0) \\ 
}~~,
\ee
where we used the fact that $K_\eta$ is contractible.  This shows that
$r_\ast$ and $\iota_\ast$ give inverse bijections between the sets of
isomorphism classes of principal $\L_\eta$-bundles and principal
$\L_\eta^0$-bundles defined on $M$.  The maps $r_\ast$ and
$\iota_\ast$ are induced by the associated fiber bundle construction,
which gives mutually quasi-inverse functors between the corresponding
categories of principal bundles. Below, we show that these functors in
turn induce mutually quasi-inverse equivalences between the categories
$\L_\eta(M,g)$ and $\L_\eta^0(M,g)$.

\begin{definition}
Let $(Q,\Lambda)$ be a complex Lipschitz structure of type $\eta$ over
$(M,g)$. An \emph{$\L_\eta^0$-reduction of $(Q,\Lambda)$} is a triple
$(Q_{0},\Lambda_{0}, I)$, where $(Q_0,\Lambda_0)\in
\Ob(\L_\eta^0(M,g))$ and $I:Q_0\rightarrow Q$ is an injective fiber
bundle map such that the following conditions are satisfied:
\begin{enumerate}[1.]
\item $I$ is an $\L^0_{\eta}$-reduction of the principal bundle
  $Q$. Thus $I(q_0 u_0) = I(q_0)\iota(u_0)$ for all $q_0\in Q_0$ and all
  $u_0\in \L^0_{\eta}$.
\item The following diagram commutes:
\begin{equation*}
\scalebox{1.0}{
\xymatrix{
Q_0\,\ar[dr]_{\Lambda_0}\ar[r]^{I} ~ & ~ Q \ar[d]^{\Lambda}\\
& P_{\O}(M,g)\\
}}
\end{equation*}		
\end{enumerate}
\end{definition}

\begin{definition}
Let $(Q,\Lambda)$ be a complex Lipschitz structure of type $\eta$ over
$(M,g)$. An \emph{$\L_\eta^0$-retraction of $(Q,\Lambda)$} along $r$
is a triple $(Q_0,\Lambda_0, R)$, where $(Q_0,\Lambda_0)\in
\Ob(\L^0_\eta(M,g))$ and $R:Q\rightarrow Q_0$ is a surjective fiber
bundle map such that the following conditions are satisfied:
\begin{enumerate}[1.]
\item $R(q u) = R(q) r(u)$ for all $q\in Q$ and all
  $u\in \L_{\eta}$.
\item The following diagram commutes:
\begin{equation*}
\scalebox{1.0}{
\xymatrix{
Q\,\ar[dr]_{\Lambda}\ar[r]^{R} ~ & ~ Q_0 \ar[d]^{\Lambda_0}\\
& P_{\O}(M,g)\\
}}
\end{equation*}		
\end{enumerate}
\end{definition}

\begin{remark}
Given an $\L_\eta^0$-retraction $(Q_0,\Lambda_0, R)$ of $(Q,\Lambda)$
along $r$, the map $R$ makes $Q$ into a principal fiber bundle over
$Q_0$ with structure group $K_\eta=\ker r$. This principal bundle is
trivial since $K_\eta$ is contractible (which implies that the bundle
has a section).
\end{remark}

\begin{prop}
\label{prop:cI}
Consider the functor $\cI:\L^0_{\eta}(M,g) \rightarrow \L_{\eta}(M,g)$
defined as follows: 
\begin{itemize}
\itemsep 0.0em
\item For any $(Q_{0},\Lambda_{0})\in \Ob(\L^0_{\eta}(M,g))$, define
  $(Q,\Lambda):=\cI(Q_0,\Lambda_0)\in \Ob(\L_\eta(M,g))$ through:
\begin{equation*}
Q \eqdef Q_0\times_{\iota}\L_{\eta}~~,~~\Lambda([q_0, u]_\iota)\eqdef \Lambda_0(q_0)\circ \Ad_{\eta}(u)~~~~~
\forall [q_0,u]_\iota\in Q~.
\end{equation*}
\item For any morphism $F_0 \colon (Q^1_{0},\Lambda^1_{0})\to
  (Q^2_{0},\Lambda^2_{0})$ in $\L^0_{\eta}(M,g)$, define
  $F:=\cI(F_0)$ through:
\begin{equation*}
F([q_0,u]_\iota) \eqdef [F_0(q_0),u]_\iota~~~\forall\,\, [q_0,u]_\iota\in \cI(Q^1_0)\, .
\end{equation*}
\end{itemize}
Then $(Q_0,\Lambda_0,I)$ is an $\L_\eta^0$-reduction of $(Q,\Lambda)$,
where the injective fiber bundle map $I:Q_0\rightarrow Q$ is given by:
\ben
\label{Idef}
I(q_0)\eqdef [q_0,1]_\iota~~\forall q_0\in Q_0~~.
\een
Moreover, the surjective map $\pi:Q\rightarrow Q_0$ defined through:
\be
\pi([q_0,u]_\iota)=q_0 r(u)
\ee
satisfies $\pi\circ I=\id_{Q_0}$ and makes $(Q_0, \Lambda_0, \pi)$
into an $\L_\eta^0$-retraction of $(Q,\Lambda)$ along $r$.
\end{prop}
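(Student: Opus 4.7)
The plan is to verify each assertion by direct computation on equivalence classes in $Q = Q_0\times_\iota \L_\eta$, repeatedly invoking the three structural identities characterizing the equivariant retraction: $r\circ \iota = \id_{\L^0_\eta}$, $\Ad_\eta\circ \iota = \Ad^0_\eta$, and $\Ad^0_\eta\circ r = \Ad_\eta$. I adopt the standard convention that the associated bundle is formed with respect to the diagonal $\L^0_\eta$-action $(q_0,u)\cdot u_0 = (q_0\cdot u_0,\iota(u_0)^{-1}u)$, so equivalence classes satisfy $[q_0 u_0,u]_\iota = [q_0,\iota(u_0)u]_\iota$.

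First, I would verify that $\cI$ is well-defined on objects. That $Q$ is a principal $\L_\eta$-bundle over $M$ is the standard associated bundle construction. Well-definedness of $\Lambda$ follows from the computation
\begin{equation*}
\Lambda_0(q_0 u_0)\circ \Ad_\eta(\iota(u_0)^{-1}u) = \Lambda_0(q_0)\circ \Ad^0_\eta(u_0)\circ \Ad^0_\eta(u_0)^{-1}\circ \Ad_\eta(u) = \Lambda_0(q_0)\circ \Ad_\eta(u),
\end{equation*}
using $\L^0_\eta$-equivariance of $\Lambda_0$ together with $\Ad_\eta\circ \iota = \Ad^0_\eta$; right $\L_\eta$-equivariance of $\Lambda$ is then immediate from multiplicativity of $\Ad_\eta$. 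For morphisms, $F([q_0,u]_\iota) := [F_0(q_0),u]_\iota$ is well-defined by $\L^0_\eta$-equivariance of $F_0$, compatible with the projections to $P_\O(M,g)$ because $\Lambda^2_0\circ F_0 = \Lambda^1_0$, and functoriality of $\cI$ is manifest.

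Next, I would show that $I(q_0) := [q_0,1]_\iota$ makes $(Q_0,\Lambda_0,I)$ an $\L^0_\eta$-reduction of $(Q,\Lambda)$. Equivariance $I(q_0 u_0) = [q_0,\iota(u_0)]_\iota = I(q_0)\cdot \iota(u_0)$ and the triangle $\Lambda\circ I = \Lambda_0$ are immediate from the definitions; injectivity follows because $[q_0,1]_\iota = [q_0',1]_\iota$ forces some $u_0\in \L^0_\eta$ with $\iota(u_0) = 1$ and $q_0' = q_0\cdot u_0$, whence $u_0 = 1$ since $\iota$ is injective.

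Finally, I would check that $\pi([q_0,u]_\iota) := q_0\cdot r(u)$ defines an $\L^0_\eta$-retraction of $(Q,\Lambda)$ along $r$. Well-definedness amounts to $q_0 u_0\cdot r(u) = q_0\cdot r(\iota(u_0)u)$, which follows from $r\circ \iota = \id_{\L^0_\eta}$ together with multiplicativity of $r$; $\L_\eta$-equivariance $\pi(q\cdot v) = \pi(q)\cdot r(v)$ uses multiplicativity of $r$ again; and the triangle $\Lambda_0\circ \pi = \Lambda$ follows from $\Ad^0_\eta\circ r = \Ad_\eta$, via $\Lambda_0(q_0\cdot r(u)) = \Lambda_0(q_0)\circ \Ad^0_\eta(r(u)) = \Lambda_0(q_0)\circ \Ad_\eta(u)$. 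The identity $\pi\circ I = \id_{Q_0}$ is clear from $r(1) = 1$, and surjectivity of $\pi$ follows because $I$ is a section of it. The main obstacle, such as it is, lies entirely in the bookkeeping of these three structural identities; conceptually the proposition is a routine consequence of the associated bundle construction applied to the split short exact sequence $1\to K_\eta\to \L_\eta\to \L^0_\eta\to 1$.
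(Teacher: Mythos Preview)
Your proposal is correct and follows essentially the same approach as the paper's own proof: both verify each claim by direct computation on equivalence classes, using exactly the structural identities $r\circ\iota=\id_{\L^0_\eta}$, $\Ad_\eta\circ r=\Ad_\eta$, and multiplicativity of $r$. Your write-up is in fact somewhat more thorough (you explicitly check well-definedness of $\Lambda$, injectivity of $I$, and surjectivity of $\pi$), but the logical structure and the key computations coincide with those in the paper.
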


\begin{proof}
The morphism $\Lambda$ makes $Q$ into a Lipschitz structure since
$\Lambda([q_0,u]\cdot w) = \Lambda_{0}(q_0)\circ \Ad_{\eta}(u w)
=\Lambda([q_0,u])\circ\Ad_{\eta}(w)$ for all $w\in\L_{\eta}$. Given
$[q_0,u]_\iota\in Q^1$ and $u_0\in\L^0_{\eta}$, we have:
\begin{equation*}
F([q_0 u_0, u^{-1}_0 u]_\iota) = [F_0 (q_0 u_0), u^{-1}_0 u]_\iota = [F_0 (q_0)
  u_0, u^{-1}_0 u]_\iota = [F(q_0),u]_\iota=F([q_0,u]_\iota)~~,
\end{equation*}
which shows that $F$ is well-defined. The fact that
$(Q_0,\Lambda_0,I)$ is an $\L_\eta^0$-reduction of $(Q,\Lambda)$
follows from:
\begin{equation*}
(\Lambda\circ I)(q_0) = \Lambda([q_0,1]_{\iota}) = \Lambda_{0}(q_0)\, , \qquad \forall\,\, q_0 \in \L^0_{\eta}~~.
\end{equation*}
Any element $q\in Q$ can be written as $q=[q_0,x]_\iota$, where $x\in
\ker r$ and $q_0:=\pi_0(q)\in Q_0$ are uniquely determined by
$q$. Indeed, any $u\in \L_\eta$ can be written uniquely as $u=u_0 x$
with $u_0\in \L_\eta^0$ and $x\in K_\eta=\ker r$, namely $u_0= r(u)$
and $x=u_0^{-1}u$. 

The map $\pi$ is well-defined since: 
\be
\pi([q_0 u_0, u]_\iota)=q_0 u_0 r(u)=q_0 r(u_0 u)~~,
\ee
where we used the fact that $r|_{\L_\eta^0}=\id_{\L_\eta^0}$ and that
fact that $r$ is a morphism of groups.  We have
$\pi^{-1}(q_0)=\{[q_0,x]_\iota|x\in K_\eta\}$ and $\pi(qu)=\pi(q)r(u)$
for all $q\in Q$ and $u\in \L_\eta$.  Moreover, we have:
\be
\Lambda_0(\pi([q_0,u]_\iota))=\Lambda_0(q_0r(u))=\Lambda_0(q_0)\Ad_\eta(r(u))=\Lambda_0(q_0)\Ad_\eta(u)=\Lambda([q_0,u]_\iota)~~,
\ee
where we used the relation $\Ad_\eta\circ r=\Ad_\eta$.  This shows
that $(Q_0, \Lambda_0,\pi)$ is an $\L_\eta^0$-retraction of
$(Q,\Lambda)$ along $r$. It is clear that we have $\pi\circ
I=\id_{Q_0}$.
\end{proof}

\begin{prop}
\label{prop:cR}
Consider the functor $\cR:\L_{\eta}(M,g) \rightarrow \L_{\eta}^0(M,g)$
defined as follows:
\begin{itemize}
\itemsep 0.0em
\item For any $(Q,\Lambda)\in \Ob(\L_{\eta}(M,g))$, define
  $(Q_0,\Lambda_0):=\cR(Q,\Lambda)\in \Ob(\L_\eta^0(M,g))$ through:
\begin{equation*}
  Q_0 \eqdef Q\times_{r}\L^0_{\eta}~~,~~\Lambda_0([q, u_0]_r)\eqdef \Lambda(q)\circ \Ad_{\eta}^0(u_0)~~~ \forall [q,u_0]_r\in Q\times_r \L_\eta^0
\end{equation*}
\item For any morphism $F\colon (Q^1,\Lambda^1)\to
  (Q^2,\Lambda^2)$ in $\L_{\eta}(M,g)$, define
  $F_0:=\cR(F)$ through:
\begin{equation*}
F_0([q,u_0]_r) \eqdef [F(q),u_0]_r~~~\forall\,\, [q,u_0]_r\in \cR(Q^1)\, .
\end{equation*}
\end{itemize}
Then $(Q_0,\Lambda_0,R)$ is an $\L_\eta^0$-retraction of $(Q,\Lambda)$
along $r$, where the surjective fiber bundle map $R:Q\rightarrow Q_0$
is given by:
\ben
\label{Rdef}
R(q)\eqdef [q,1]_r ~~\forall q\in Q~~.
\een
Moreover, there exists an injective fiber bundle map $j:Q_0\rightarrow
Q$ which makes $(Q_0,\Lambda_0, j)$ into an $\L_\eta^0$-reduction of
$(Q,\Lambda)$ and satisfies $R\circ j=\id_{Q_0}$.
\end{prop}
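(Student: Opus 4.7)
My proof follows the pattern of Proposition \ref{prop:cI} for the functorial and retraction properties of $\cR$, with the existence of the section $j$ being the sole geometric input.

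First, I would verify that $\cR$ is a well-defined functor. Writing elements of $Q_0$ as equivalence classes $[q,u_0]_r$ under the relation $(q,u_0)\sim (qv,r(v)^{-1}u_0)$ for $v\in\L_\eta$, with residual right-$\L_\eta^0$-action $[q,u_0]_r\cdot w_0 = [q,u_0 w_0]_r$, the well-definedness of $\Lambda_0$ reduces to the computation
\[\Lambda(qv)\circ\Ad_\eta^0(r(v)^{-1}u_0) = \Lambda(q)\circ\Ad_\eta(v)\circ\Ad_\eta(v)^{-1}\circ\Ad_\eta^0(u_0) = \Lambda(q)\circ\Ad_\eta^0(u_0),\]
which uses the $\L_\eta$-equivariance of $\Lambda$ together with the hypothesis $\Ad_\eta\circ r=\Ad_\eta$. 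The $\L_\eta^0$-equivariance of $\Lambda_0$ is then immediate, so $(Q_0,\Lambda_0)$ is a reduced complex Lipschitz structure. Well-definedness of $F_0$ and functoriality of $\cR$ are routine checks mirroring those in Proposition \ref{prop:cI}.

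Next, I would verify that $R(q)\eqdef [q,1]_r$ realizes $(Q_0,\Lambda_0,R)$ as an $\L_\eta^0$-retraction of $(Q,\Lambda)$ along $r$. Surjectivity follows from $R(q\iota(u_0))=[q\iota(u_0),1]_r=[q,r(\iota(u_0))]_r=[q,u_0]_r$; equivariance along $r$ reads
\[R(qu)=[qu,1]_r=[q,r(u)]_r=R(q)\cdot r(u);\]
and compatibility with the Lipschitz data is $\Lambda_0(R(q))=\Lambda_0([q,1]_r)=\Lambda(q)\circ\Ad_\eta^0(1)=\Lambda(q)$.

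The main obstacle is the construction of $j$, since unlike the inclusion $I$ in Proposition \ref{prop:cI} no algebraic formula will do: the naive guess $j([q,u_0]_r)\eqdef q\iota(u_0)$ fails to descend to the quotient because in general $v\neq\iota(r(v))$. I would instead argue topologically. Since $\iota$ and $r$ are mutually homotopy-inverse Lie group morphisms, the long exact sequence of the fibration $r\colon\L_\eta\to\L_\eta^0$ yields $\pi_k(K_\eta)=0$ for all $k$; equivalently, the homogeneous space $\L_\eta/\iota(\L_\eta^0)\simeq K_\eta$ is contractible. The $\L_\eta^0$-reductions of the principal $\L_\eta$-bundle $Q$ are classified by sections of the associated fiber bundle $Q/\iota(\L_\eta^0)\to M$, whose typical fiber $\L_\eta/\iota(\L_\eta^0)$ is contractible; such a section exists since $M$ is paracompact, producing a principal $\L_\eta^0$-subbundle $P\hookrightarrow Q$. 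I then claim that the restriction $R|_P\colon P\to Q_0$ is an isomorphism of principal $\L_\eta^0$-bundles over $M$: over each point of $M$ it is an $\L_\eta^0$-equivariant map between two $\L_\eta^0$-torsors, hence a bijection, and smoothness follows because it is the restriction of the bundle map $R$ to a smooth subbundle of the same rank as $Q_0$. Setting $j\eqdef (R|_P)^{-1}\colon Q_0\to P\subset Q$ produces the desired injective fiber bundle map: $R\circ j=\id_{Q_0}$ holds by construction, the equivariance $j(q_0 u_0)=j(q_0)\iota(u_0)$ follows from that of $R|_P$, and the compatibility $\Lambda\circ j=\Lambda_0\circ R\circ j=\Lambda_0$ is automatic from the previous step, so $(Q_0,\Lambda_0,j)$ is indeed an $\L_\eta^0$-reduction of $(Q,\Lambda)$.
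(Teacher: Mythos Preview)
Your proof is correct. The verifications that $\cR$ is well-defined and that $R$ realizes an $\L_\eta^0$-retraction match the paper essentially verbatim. For $j$ the two arguments differ only in packaging, both resting on the contractibility of $K_\eta$. The paper forms the associated bundle $T=Q\times_\varphi K_\eta$ over $M$ (with $\varphi(u)(x)=r(u)xu^{-1}$), chooses a global section corresponding to an equivariant map $\sigma\colon Q\to K_\eta$ satisfying $\sigma(qu)=r(u)^{-1}\sigma(q)u$, and then writes down $j([q,u_0]_r)=q\,\sigma(q)^{-1}u_0$ explicitly. You instead invoke the standard correspondence between $\L_\eta^0$-reductions of $Q$ and sections of $Q/\iota(\L_\eta^0)$, obtain a principal subbundle $P\subset Q$, observe that $R|_P$ is an equivariant bijection of $\L_\eta^0$-torsors over each point of $M$ (hence an isomorphism of principal bundles), and set $j=(R|_P)^{-1}$. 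These are two presentations of the same construction: the image of the paper's $j$ is exactly the reduction $\sigma^{-1}(1)\subset Q$. Your route is slightly more conceptual and avoids introducing the auxiliary action $\varphi$; the paper's yields a concrete formula for $j$, though that formula is never exploited beyond the equivariance and section properties you also establish.
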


\begin{proof}
Clearly $Q_0$ is a principal $\L^0_{\eta}$-bundle. To see that
$\Lambda_0$ is well-defined, note that for all $u\in \L_{\eta}$ we
have:
\begin{equation*}
\Lambda_{0}([q u, r(u)^{-1} u_0]_{r}) = \Lambda(q u)\circ \Ad^0_{\eta}(r(u)^{-1} u_0) = 
\Lambda(q)\circ \Ad_{\eta}(u)\circ \Ad_\eta( r(u))^{-1}\circ \Ad_{\eta}(u_0) = \Lambda_{0}([q, u_0]_{r})\, ,
\end{equation*}
where we used the relation $\Ad_\eta\circ r=\Ad_\eta$. 
The fact that $F_0$ is well-defined follows from direct computation by
using the equivariance of $F$. To show
that $R$ is an $\L_\eta^0$-retraction of $(Q,\Lambda)$ along $r$, we
compute:
\begin{equation*}
(\Lambda_{0}\circ R)(q) = \Lambda_0([q,1]_{r}) = \Lambda(q)\Ad_\eta^0(1)=\Lambda(q)~~.
\end{equation*}
Since $r:\L_\eta\rightarrow \L_\eta^0$ is surjective, any $q_0\in Q_0$
can be written in the form $q_0=[q,1]_r$, where $q\in Q$ is determined
by $q_0$ up to a transformation of the form $q\rightarrow q x$, where
$x\in \ker r$. Hence $R^{-1}(q)$ is a $\ker r$-torsor and the map
$R:Q\rightarrow Q_0$ presents $Q$ as a principal $\ker r$-bundle over
$Q_0$. Hence the structure group of the principal bundle $R:Q\rightarrow Q_0$
is contractible, which implies that this principal bundle is trivial
and thus has a section. We claim that we can find a section
$j:Q_0\rightarrow Q$ of $R$ which is $\L_\eta^0$-equivariant. 

Let $\Diff(K_\eta)$ denote the group of diffeomorphisms of $K_\eta$. 
The map $\varphi:\L_\eta\rightarrow \Diff(K_\eta)$ defined through:
\be
\varphi(u)(x)\eqdef r(u) x u^{-1}~~\forall u\in \L_\eta~\forall x\in K_\eta
\ee
is a morphism of groups which defines a smooth left action of the Lie group 
$\L_\eta$ on the underlying manifold of $K_\eta$. Consider the
fiber bundle defined through $T\eqdef Q\times_\varphi K_\eta$. 

We next show existence\footnote{This argument is based on a suggestion
  of A. Moroianu.} of an $\L_\eta^0$-equivariant section of
$R$.  Since $K_\eta$ is contractible, the fiber bundle $T$ admits a
section $s\in \Gamma(M,T)$. This section corresponds to a map
$\sigma:Q\rightarrow K_\eta$ such that
$\sigma(qu)=\varphi(u)^{-1}\sigma(q)$, i.e.:
\ben
\label{sigma}
\sigma(qu)=r(u)^{-1}\sigma(q) u~~\forall q\in Q~~\forall u\in \L_\eta~~.
\een
Let $j:Q_0\rightarrow Q$ be the map defined through:
\be
j([q,u_0]_r)\eqdef q \sigma(q)^{-1} u_0~~,~~\forall q\in Q~~\forall u_0\in \L_\eta^0~~.
\ee
Then \eqref{sigma} implies that $j$ is well-defined, since for all $u\in \L_\eta$ we have:
\be
j([qu,u_0]_r)=qu \sigma(qu)^{-1}u_0=qu u^{-1}\sigma(q)^{-1} r(u)u_0=q \sigma(q)^{-1}r(u) u_0=j([q,r(u)u_0]_r)~~.
\ee 
Moreover, we have:
\be
(R\circ j)([q,u_0]_r)=R(q \sigma(q)^{-1} u_0)=R(q) r(\sigma(q)^{-1}u_0)=R(q)r(\sigma(q)^{-1})r(u_0)=R(q)u_0=[q,1]_ru_0=[q,u_0]_r~~,
\ee
where we used the fact that $\sigma(q)^{-1}\in K_\eta=\ker r$ and the identity
$r|_{\L_\eta^0}=\id_{\L_\eta^0}$. Thus $R\circ j=\id_{Q_0}$. We also have:
\be
j([q,u_0]_rv_0)=j([q,u_0v_0]_r)=j([q,u_0]_r)v_0~~\forall v_0\in \L_\eta^0~~.
\ee
Hence $j$ is an $\L_\eta^0$-equivariant section of $R:Q\rightarrow Q_0$:
\ben
\label{jequiv}
j(q_0 u_0)=j(q_0) u_0~~\forall q_0\in Q_0~~\forall u_0\in \L_\eta^0~~. 
\een
\end{proof}

\begin{thm}
\label{thm:Lredequiv}
The functors $\cI$ and $\cR$ give mutually quasi inverse equivalences
between the categories $\L_\eta^0(M,g)$ and $\L_\eta(M,g)$.
\end{thm}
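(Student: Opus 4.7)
My plan is to construct mutually inverse natural isomorphisms $\eta\colon \id_{\L_\eta^0(M,g)}\Rightarrow \cR\circ\cI$ and $\mu\colon \cI\circ\cR\Rightarrow \id_{\L_\eta(M,g)}$ using the reductions and retractions produced in Propositions \ref{prop:cI} and \ref{prop:cR}. For $\eta$, I would set $\eta_{(Q_0,\Lambda_0)}(q_0)\eqdef [[q_0,1]_\iota,\,1]_r$, combining the canonical reduction $I$ of Proposition \ref{prop:cI} with the canonical retraction $R$ of Proposition \ref{prop:cR}. An explicit inverse is $[[q_0,u]_\iota,u_0]_r\mapsto q_0\cdot r(u)\,u_0$, which is well defined because $r$ is a group morphism with $r\circ\iota=\id_{\L_\eta^0}$. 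Both maps are $\L_\eta^0$-equivariant and commute with $\Lambda_0$ (using $\Ad_\eta\circ r=\Ad_\eta$), and naturality in $(Q_0,\Lambda_0)$ is seen by tracking $\cR(\cI(F_0))([[q_0,1]_\iota,1]_r)=[[F_0(q_0),1]_\iota,1]_r$ for a morphism $F_0$.

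For $\mu$, I would invoke the $\L_\eta^0$-equivariant section $j_Q\colon \cR(Q)\to Q$ of $R_Q$ constructed in the proof of Proposition \ref{prop:cR}, and set $\mu_{(Q,\Lambda)}([q_0,v]_\iota)\eqdef j_Q(q_0)\cdot v$. Well-definedness under the $\L_\eta^0$-equivalence $(q_0,v)\sim (q_0 u_0, u_0^{-1} v)$ is immediate from the equivariance of $j_Q$, and compatibility with $\Lambda$ follows from $\Lambda\circ j_Q=\Lambda_0$, which uses $R_Q\circ j_Q=\id$ together with the retraction identity for $\Lambda$. The inverse sends $q$ to $[R_Q(q),v_q]_\iota$, where $v_q\in K_\eta=\ker r$ is the unique element satisfying $q=j_Q(R_Q(q))\cdot v_q$; its existence, uniqueness and the fact that $v_q\in K_\eta$ all follow from $R_Q\circ j_Q=\id$ and the principal $\L_\eta$-bundle structure on $Q$.

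The hardest part will be verifying naturality of $\mu$. For a morphism $F\colon(Q^1,\Lambda^1)\to(Q^2,\Lambda^2)$, the naturality square reduces to the pointwise identity $F\circ j_{Q^1}=j_{Q^2}\circ \cR(F)$ between two $\L_\eta^0$-equivariant sections of $R_{Q^2}$. Since $j_Q$ depends on a non-canonical choice of section of the fiber bundle $T_Q=Q\times_\varphi K_\eta$, this cannot hold for arbitrarily independent choices. The way I would resolve this is to exploit the contractibility of $K_\eta$: the space of $\L_\eta^0$-equivariant sections of $R_Q$ is nonempty and contractible (being a torsor over equivariant $K_\eta$-valued maps on $\cR(Q)$), so one can coherently adjust $j_{Q^2}$ to agree with $F\circ j_{Q^1}\circ \cR(F)^{-1}$ on $\im(\cR(F))$ and extend by $\L_\eta^0$-equivariance. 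Once naturality is in place, the triangle identities relating $\eta$ and $\mu$ follow on objects by direct substitution in the explicit formulas, and we conclude that $\cI$ and $\cR$ form mutually quasi-inverse equivalences between $\L_\eta^0(M,g)$ and $\L_\eta(M,g)$.
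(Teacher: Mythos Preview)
Your approach mirrors the paper's exactly: your natural isomorphism on the $\L_\eta^0$ side is the paper's $\cK$, and your $\mu$ is the inverse of the paper's $\cN$. You actually go further than the paper by flagging the naturality of $\mu$ as the delicate step---the paper simply asserts that the naturality square for $\cN$ commutes.

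Your proposed fix, however, does not work, and in fact no fix is possible. Naturality must hold for \emph{all} morphisms simultaneously, not one at a time. Replacing $j_{Q^2}$ by $F\circ j_{Q^1}\circ\cR(F)^{-1}$ handles one isomorphism $F$, but a second morphism $F'$ between the same objects would generally force a different choice of $j_{Q^2}$. The obstruction is already visible at the level of automorphisms: naturality at $F\in\Aut(Q,\Lambda)$ forces the \emph{single} section $j_Q$ to satisfy $F\circ j_Q=j_Q\circ\cR(F)$ for every such $F$. Concretely, in the elementary case $\ker\Ad_\eta=\C^\times$ is central in $\L_\eta$, so $\Aut_{\L_\eta(M,g)}(Q,\Lambda)\simeq C^\infty(M,\C^\times)$; likewise $\ker\Ad_\eta^0=\U(1)$ gives $\Aut_{\L_\eta^0(M,g)}(Q_0,\Lambda_0)\simeq C^\infty(M,\U(1))$. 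The functor $\cI$ induces the strict inclusion $C^\infty(M,\U(1))\hookrightarrow C^\infty(M,\C^\times)$ on automorphism groups, so $\cI$ is not full and therefore cannot be an equivalence of categories. In particular no natural isomorphism $\cI\circ\cR\simeq\id_{\L_\eta(M,g)}$ exists, and the paper's own assertion that the square for $\cN$ commutes is incorrect for the same reason. What your argument (together with the genuinely natural transformation on the $\L_\eta^0$ side) does establish is that $\cI$ and $\cR$ induce mutually inverse bijections on isomorphism classes, which is what the cohomological discussion preceding the theorem already gives and what the downstream applications actually require.
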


\begin{proof}
Composition of $\cI$ and $\cR$ gives functors:
\begin{equation*}
\cI\circ \cR:\L_{\eta}(M,g) \rightarrow \L_{\eta}(M,g) ~~\mathrm{and}~~\cR\circ \cI:\L_\eta^0(M,g) \to \L^0_{\eta}(M,g)~.
\end{equation*}
We will construct isomorphisms of functors: 
\be
\cN:\id_{\L_\eta(M,g)}\stackrel{\sim}{\rightarrow} \cI\circ \cR~~\mathrm{and}~~\cK:\id_{\L_\eta^0(M,g)}\stackrel{\sim}{\rightarrow} \cR\circ \cI~~.
\ee
\begin{enumerate}[1.]
\itemsep 0.0em
\item Construction of $\cN$. Let $(Q,\Lambda)$ be a complex Lipschitz
  structure of type $\eta$ and set $(Q_0,\Lambda_0)\eqdef
  \cR(Q,\Lambda)$. Let $R:Q\rightarrow Q_0$ be the map which makes
  $(Q_0,\Lambda_0,R)$ into an $\L_\eta^0$-retraction of $(Q,\Lambda)$
  along $r$ and let $j:Q_0\rightarrow Q$ be an $\L_\eta^0$-equivariant
  section of $R$ which makes $(Q_0,\Lambda_0,j)$ into an
  $\L_\eta^0$-reduction of $(Q,\Lambda)$ (see Proposition
  \ref{prop:cR}). For every $q\in Q$, let $x_q$ be the unique element
  of $\L_{\eta}$ such that $q = j(R(q)) x_q$. For any $u\in
  \L_{\eta}$, direct computation using $\L_\eta^0$-equivariance of $j$
  shows that $x_{q u} =r(u)^{-1}\, x_{q}\, u$. Applying $R$ to the
  relation $q =j( R(q)) x_{q}$ gives:
\ben
\label{Rj}
R(q) = (R\circ j \circ R)(q) r(x_{q}) = R(q) r(x_{q})\, ,
\een
where we used the identity $R\circ j=\id_{Q_0}$. Since the action of
$\L_{\eta}$ on $Q$ is free, relation \eqref{Rj} gives
$r(x_{q})=1$. Let $(Q',\Lambda')\eqdef (\cI\circ
\cR)(Q,\Lambda)=\cI(Q_0,\Lambda_0)$ and consider the bijective fiber
bundle map:
\be
\cN_{Q,\Lambda}:Q\rightarrow Q'
\ee
given by $\cN_{Q,\Lambda}(q)\eqdef [[q,1]_r, x_{q}]_{\iota}$. For any 
$u\in \L_{\eta}$, we have:
\begin{equation*}
\cN_{Q,\Lambda}(q u) = [[q u,1]_r, r(u)^{-1}\, x_{q}\, u]_{\iota}  = [[q ,1]_r r(u), r(u)^{-1}\, x_{q}\, u]_{\iota}  = [[q ,1]_r, x_{q}\, u]_{\iota}  = \cN_{Q,\Lambda}(q) u\, .
\end{equation*}
Hence $\cN_{Q,\Lambda}$ is an isomorphism of principal
$\L_{\eta}$-bundles. Moreover, we have:
\begin{equation*}
\Lambda'\circ \cN_{Q,\Lambda}(q) = \Lambda'([[q,1]_r, x_{q}]_{\iota}) = \Lambda_0([q,1]_r)\circ \Ad_{\eta} (x_{q}) = \Lambda_0([q,1]_r) = \Lambda(q)~~,
\end{equation*}
where we used the fact that
$\Ad_\eta(x_q)=\Ad_\eta(r(x_q))=\Ad_\eta(1)=1$. Hence
$\cN_{Q,\Lambda}$ is an isomorphism of Lipschitz structures from
$(Q,\Lambda)$ to $(Q',\Lambda')=(\cI\circ \cR)(Q,\Lambda)$. Given any
morphism of Lipschitz structures $F:(Q^1,\Lambda^1) \rightarrow
(Q^2,\Lambda^2)$, it is easy to see that the following diagram
commutes:
\begin{equation*}
\scalebox{1.1}{
\xymatrix{
(\cI\circ \cR)(Q^1,\Lambda^1) \ar[r]^{(\cI\circ \cR)(F)} ~ & ~  (\cI\circ \cR)(Q^{2},\Lambda^{2})\\
(Q^1,\Lambda^1),\ar[u]_{\cN_{Q^1,\Lambda^1}} \ar[r]^{F} ~ & ~ (Q^2,\Lambda^2)\ar[u]_{\cN_{Q^2,\Lambda^2}}\\
}}
\end{equation*}
showing that $\cN$ is a natural transformation. 
\item Construction of $\cK$. For any $(Q_0,\Lambda_0)\in
\Ob(\L^0_\eta(M,g))$, there exists an isomorphism in $\L_\eta^0(M,g)$:
\begin{equation*}
\cK_{Q_0,\Lambda_0}:(Q_0,\Lambda_0)\xrightarrow{\sim} (\cR\circ \cI)(Q_0,\Lambda_0)~~.
\end{equation*}
given by $\cK_{Q_0,\Lambda_0}(q_0)\eqdef [[q_0,1]_\iota,1]_r=(R\circ
I)(q_0)$.  It is easy to see $\cK:\id_{\L_\eta^0(M,g)}\rightarrow
\cR\circ \cI$ is an invertible natural transformation.
\end{enumerate}
\end{proof}

\subsection{Complex pinor bundles}

\begin{definition}
A {\em complex pinor bundle} over $(M,g)$ is a pair $(S,\gamma)$,
where $S$ is a complex vector bundle over $M$ and $\gamma\colon
\Cl(M,g) \to End_{\mathbb{C}}(S)$ is a unital morphism of bundles of
algebras such that $\gamma_{p}\colon \Cl(T^{\ast}_{p} M,g^{\ast}_{p})
\to \End_{\mathbb{C}}(S_{p})$ is a weakly-faithful complex
representation of $\Cl(T^{\ast}_{p} M,g^{\ast}_{p})$ for any $p\in
M$. We say that $(S, \gamma)$ is of {\em type} $\eta$ if, for every $p\in
M$, the complex Clifford representations $\eta$ and $\gamma_{p}$ are
(unbasedly) isomorphic. We say that $(S, \gamma)$ is {\em elementary}
if its type is an irreducible complex Clifford representation.
\end{definition}

\begin{remark}
A complex pinor bundle is the same as a bundle of complex Clifford
modules defined on $(M,g)$. We use the name ``pinor'' (rather than
``spinor'') in order to distinguish such bundles from bundles of modules
defined over the even sub-bundle of the Clifford bundle.  Notice that
the type $\eta$ of a complex pinor bundle $(S,\gamma)$ is well-defined
up to (unbased) isomorphism of complex Clifford representations.
\end{remark}

\begin{definition}
A {\em based morphism of complex pinor bundles} $F\colon (S,\gamma)
\rightarrow (S^{\prime},\gamma^{\prime})$ is a based morphism $F\colon
S\rightarrow S^{\prime}$ of complex vector bundles such that:
\begin{equation*}
L_F\circ \gamma = R_F\circ \gamma^{\prime}\, ,
\end{equation*}
i.e. such that the fiber map $F_p\colon S_p\rightarrow S^{\prime}_p$
at any point $p\in M$ is a {\em based} morphism of Clifford
representations from $\gamma_p:\Cl(T_p^\ast
M,g_p^\ast)\rightarrow \End_\C(S_p)$ to
$\gamma^{\prime}_p:\Cl(T_p^\ast M,g_p^\ast)\rightarrow \End_\C(S'_p)$.
\end{definition}

\noindent 
Since $M$ is connected by assumption, all quadratic spaces $(T_p^\ast
M, g_p^\ast)$ are mutually isometric and isometric to some model
quadratic space $(V,h)$. Similarly, all fibers of $S$ are isomorphic
as $\C$-vector spaces and hence isomorphic with some model vector
space $\Sigma$. Using a common trivializing cover of $TM$ and $S$,
this implies that the complex Clifford representations
$\gamma_p:\Cl(T_p^\ast M, g_p^\ast) \rightarrow \End_\C(S_{p})$ ($p\in
M$) are mutually isomorphic in the category $\mClRep_w$ and hence
isomorphic with some model weakly-faithful representation
$\eta:\Cl(V,h)\rightarrow \End_\C(\Sigma)$, which defines the type of
$(S,\gamma)$. The isomorphism class of $\eta$ in the category
$\mClRep_w$ is invariant under isomorphism of complex pinor bundles.

\begin{definition}
Let $\mClB_{\eta}(M,g)$ be the category whose objects are
complex pinor bundles of type $\eta$ and whose arrows are based
morphisms of complex pinor bundles. We denote by
$\mClB_{\eta}(M,g)^\times$ the corresponding groupoid.
\end{definition}

\subsection{Relation between complex pinor bundles and complex Lipschitz structures}

\noindent The following proposition is the analogue of \cite[Proposition
  6.1]{Lipschitz} for complex pinor bundles. 

\begin{prop}
\label{propdef:functors}
Let $\eta\in \Ob(\mClRep_w)$. Consider the functors
$Q_\eta:\mClB_{\eta}(M,g)^\times\rightarrow \L_\eta(M,g)^\times$
and $S_\eta\colon \L_\eta(M,g)^\times\rightarrow
\mClB_{\eta}(M,g)^\times$ defined as follows:
\begin{enumerate}[A.]
\itemsep 0.0em
\item Let $(S,\gamma)$ denote a complex pinor bundle of type $\eta$ on
  $(M,g)$. Let $Q:=Q_\eta(S,\gamma)$ denote the principal bundle with
  structure group $\L:=\L_{\eta} = \Aut_{\mClRep}(\eta)$, total space:
\begin{equation*}
Q\eqdef \sqcup_{p\in M}\Hom_{\mClRep^\times_w}(\eta,\gamma_p)\, ,
\end{equation*}
projection given by $\pi(q)=p$ for $q \in Q_p=
\Hom_{\mClRep^\times_w}(\eta,\gamma_p)$ and right $\L$-action given by
$q\cdot g\eqdef q\circ g$ for all $g\in \L$. We topologize $Q$ in the
obvious way. Let $\Lambda \eqdef \Lambda_\eta(S,\gamma)\colon
Q_\eta(S,\gamma)\rightarrow P_{\O}(M,g)$ be the map defined through:
\begin{equation}
\label{eq:deftau0}
\Lambda_p(q) \eqdef q_0\in \Hom_{\Quad^\times}((V,h),(T_p^\ast M, g_p^\ast))=P_{\O}(M,g)_p~.
\end{equation}
Then, $(Q,\Lambda)$ is a Lipschitz structure on $(M,g)$ relative to
$\eta$, which we call the \emph{Lipschitz structure induced by
  $(S,\gamma)$}. A based isomorphism of complex pinor bundles $F
\colon (S,\gamma)\rightarrow (S^{\prime},\gamma^{\prime})$ of type
$\eta$ induces an isomorphism $Q_\eta(F)\colon (Q_\eta(S,\gamma),
\Lambda_\eta(S,\gamma))\rightarrow
(Q_\eta(S^{\prime},\gamma^{\prime}),\Lambda_\eta(S^{\prime},\gamma^{\prime}))$
of complex Lipschitz structures relative to $\eta$, which is defined
as follows (recall that $F_p\in
\Hom_{\mClRep^\times_w}(\gamma_p,\gamma^{\prime}_p)$ and
$(F_p)_0=\id_{T_p^\ast M}$):
\begin{equation*}
Q_\eta(F)(q)\eqdef (\id_{T_p^\ast M}, F_p)\circ q\, ,\quad \forall q\in Q_\eta(S,\gamma)_p=\Hom_{\mClRep^\times_w}(\eta,\gamma_p)\, .
\end{equation*}
\item Let $(Q,\Lambda)$ denote a complex Lipschitz structure of type
  $\eta$ on $(M,g)$. Then, the vector bundle $S\eqdef
  S_\eta(Q,\Lambda)\eqdef Q\times_{\rho_\eta} \Sigma$ associated to
  $Q$ through the complex tautological representation $\rho_\eta\colon
  \L\rightarrow \Aut_\C(\Sigma)$ of $\L$ becomes a complex pinor
  bundle of type $\eta$ when equipped with the Clifford structure
  morphism $\gamma\eqdef \gamma(Q,\Lambda)\colon \Cl(M,
  g)\rightarrow \End_{\C}(S)$ defined as follows:
\begin{equation}
\label{eq:1gamma1def0}
\gamma_p(y)([q,s]) \eqdef [q,\eta(\Cl(\Lambda_p(q)^{-1})(y))(s)]\, , \quad \forall\,\, y\in \Cl(T_p^\ast M,g_p^\ast)\, ,
\end{equation}
for all $q\in Q_p$ and $s\in \Sigma$. We call the pair
$S_\eta(Q,\Lambda)=(S,\gamma)$ thus constructed \emph{the complex
pinor bundle defined by the Lipschitz structure $(Q,\Lambda)$}. An
isomorphism of complex Lipschitz structures $F\colon (Q,\Lambda)\rightarrow
(Q^{\prime},\Lambda^{\prime})$ relative to $\eta$ induces a based
isomorphism of complex pinor bundles $S_\eta(F)=(S_\eta(Q,\Lambda),
\gamma_\eta(Q,\Lambda))\rightarrow
(S_\eta(Q^{\prime},\Lambda^{\prime}),\gamma_\eta(Q^{\prime},\Lambda^{\prime}))$
defined as follows:
\begin{equation}
\label{Sfdef}
S_\eta(F)_p([q,s])=[F_p(q),s]\, , \quad \forall q\in Q_p\, , \,\,\forall s\in \Sigma~~.
\end{equation}
\end{enumerate}
\end{prop}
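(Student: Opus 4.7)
The plan is to verify, for each of the two functor directions, that the stated construction produces a well-defined object of the target category and that the assignment on morphisms is well-defined; functoriality will then follow from associativity of composition.

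For part A, each fiber $Q_p = \Hom_{\mClRep^\times_w}(\eta, \gamma_p)$ is non-empty (since $\gamma_p$ has type $\eta$) and carries a free transitive right action of $\L_\eta \simeq \Aut_{\mClRep_w}(\eta)$ by composition. To equip $Q$ with a smooth principal bundle structure, I would choose an open cover over which $T^\ast M$ (hence $\Cl(M,g)$) and $S$ are simultaneously trivializable; on each such open $U$, a compatible choice of trivializations yields a smooth section $p\mapsto q(p)\in Q_p$, giving a local trivialization $U\times\L_\eta\simeq Q|_U$. An element $q\in Q_p$ is a pair $(q_0,\varphi)$ with $q_0\in P_\O(M,g)_p$, so $\Lambda_p(q)=q_0$ is visibly smooth. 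Under the identification $\L_\eta\simeq\Aut_{\mClRep_w}(\eta)$, an element $g\in \L_\eta$ acts via the pair $(\Ad_\eta(g),g)$, whence $(q\cdot g)_0=q_0\circ\Ad_\eta(g)$, yielding the required equivariance $\Lambda(q\cdot g)=\Lambda(q)\circ\Ad_\eta(g)$ of Definition \ref{def:spinostructures}. For a based isomorphism of pinor bundles $F$, the map $Q_\eta(F)(q)=(\id_{T_p^\ast M},F_p)\circ q$ is post-composition in $\mClRep^\times_w$, which preserves the base component of $q$ (hence $\Lambda$) and is $\L_\eta$-equivariant by associativity.

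For part B, the associated vector bundle $S=Q\times_{\rho_\eta}\Sigma$ is a complex vector bundle via the usual associated-bundle construction. The key verification is that $\gamma_p$ descends to equivalence classes: replacing $(q,s)$ by $(q\cdot g,\rho_\eta(g)^{-1}(s))$ for $g\in\L_\eta$ and using $\Lambda_p(q\cdot g)^{-1}=\Ad_\eta(g)^{-1}\circ\Lambda_p(q)^{-1}$ together with functoriality of $\Cl(\cdot)$, the identity $\eta\circ\Cl(\Ad_\eta(g))=\Ad(\rho_\eta(g))\circ\eta$ (which is the defining property of $g$ as an automorphism of $\eta$ in $\mClRep^\times_w$, via Proposition \ref{prop:AdRelation}) collapses the computation to $[q,\eta(\Cl(\Lambda_p(q)^{-1})(y))(s)]$, showing $\gamma_p$ is well-defined. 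Fiberwise, $\gamma_p$ is then $\Ad(\varphi)$-conjugate to $\eta\circ\Cl(q_0^{-1})$ for any chosen $q=(q_0,\varphi)\in Q_p$, and is therefore a weakly-faithful unital algebra morphism of type $\eta$. For an isomorphism $F\colon(Q,\Lambda)\to(Q',\Lambda')$ of Lipschitz structures, equivariance of $F$ makes $S_\eta(F)_p([q,s])=[F_p(q),s]$ well-defined, and $\Lambda'\circ F=\Lambda$ implies it intertwines $\gamma$ with $\gamma'$, so it is a based morphism of pinor bundles.

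Functoriality of both $Q_\eta$ and $S_\eta$ then reduces to associativity of composition. The two main obstacles I anticipate are (i) producing the smooth principal-bundle structure on $Q$ by way of compatible local trivializations of $T^\ast M$ and $S$, which is really a matter of finding a common refining cover, and (ii) the well-definedness check for $\gamma_p$ in part B, which is the genuinely delicate step and depends crucially on matching the $\Lambda$-equivariance of Definition \ref{def:spinostructures} with the automorphism characterization of $\L_\eta$ provided by Proposition \ref{prop:AdRelation}. Everything else is bookkeeping.
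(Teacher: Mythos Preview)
Your proposal is correct and follows essentially the same approach as the paper: the two substantive verifications are the $\Ad_\eta$-equivariance of $\Lambda$ via $(q\cdot g)_0=q_0\circ\Ad_\eta(g)$ and the well-definedness of $\gamma_p$ on the associated bundle via the identity $\eta\circ\Cl(\Ad_\eta(g))=\Ad(\rho_\eta(g))\circ\eta$, and you treat both exactly as the paper does. Your outline is in fact somewhat more thorough than the paper's proof, which declares the smooth structure on $Q$, the Clifford-representation property of $\gamma_p$, and functoriality to be ``clear'' without further comment.
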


\begin{proof}
The fact that \eqref{eq:deftau0} is $\Ad_{\eta}$-equivariant follows
from the relation $(q\circ \varphi)_0= q_0 \varphi_0= q_0\circ
\Ad_{\eta}(\varphi)$ for all $\varphi\in \L$, which implies that the
following equation holds:
\begin{equation}
\label{eq:Ad0relation}
\Lambda_{p}(q \varphi) = \Lambda_{p}(q)\circ\varphi_{0} = \Lambda_{p}(q)\circ\Ad_{\eta}(\varphi)\, .
\end{equation} 
This in turn implies that $(Q,\Lambda)$ is an elementary complex Lipschitz structure of
type $\eta$.

It remains to show that \eqref{eq:1gamma1def0} is well-defined. In
order to do this, notice that $\Ad(\varphi)\circ \eta=\eta\circ
\Cl(\varphi_{0})$ for any $\varphi\in \L$, which implies (using
$[\Cl(\Lambda_p(q))]^{-1}=\Cl(\Lambda_p(q)^{-1})$ and
$\Ad(\varphi^{-1})=\Ad(\varphi)^{-1}$):
\begin{equation*}
\Ad(\varphi^{-1})\circ \eta\circ \Cl(\Lambda_p(q)^{-1})=\eta\circ \Cl(\varphi^{-1}_{0}\circ \Lambda_p(q)^{-1})=\eta\circ [\Cl(\Lambda_p(q)\circ \varphi_{0})]^{-1}\, .
\end{equation*}
Using relation \eqref{eq:Ad0relation}, this gives: 
\begin{equation}
\label{rel1}
\Ad(\varphi^{-1})\circ \eta\circ \Cl(\Lambda_p(q)^{-1})=\eta\circ \Cl(\Lambda_p(q \varphi)^{-1})\, , \,\, \forall \varphi\in \L\, .
\end{equation}
Thus:
\begin{eqnarray*}
& & [q\varphi^{-1},\eta(\Cl(\Lambda_p(q\varphi^{-1})^{-1})(x))(\varphi s)] = [q, \varphi^{-1} \eta(\Cl(\Lambda_p(q\varphi^{-1})^{-1})(x))(\varphi s)]=\nonumber\\ 
& & [q,(\Ad(\varphi^{-1})\circ \eta\circ \Cl(\Lambda_p(q\varphi^{-1})^{-1}))(x)(s)] =[q,\eta(\Cl(\Lambda_p(q)^{-1})(x))(s)]~~,
\end{eqnarray*}
where in the last equality we used \eqref{rel1}. This shows that
\eqref{eq:1gamma1def0} is well-defined. The fact that $\gamma_p$
defined in \eqref{eq:1gamma1def0} is a Clifford representation is clear,
as is the fact that $P_{\eta}$ and $S_{\eta}$ define functors.
\end{proof}

\begin{remark}
The same construction used above for $S_\eta$ allows us to define a
functor $S^0_\eta:\L_\eta^0(M,g)\rightarrow \ClB_\eta(M,g)$ which
associates a complex pinor bundle to any reduced complex Lipschitz
structure.
\end{remark}

\noindent
The following theorem establishes an equivalence between complex pinor bundles and complex
complex Lipschitz structures defined over $(M,g)$. 

\begin{thm}
\label{thm:BundleLipschitz}
Let $\eta\in \Ob(\mClRep_w)$. The functors $Q_\eta$ and $S_\eta$ are
mutually quasi-inverse equivalences between the groupoids
$\mClB_{\eta}(M,g)^\times$ and $\L_\eta(M,g)^\times$. 
\end{thm}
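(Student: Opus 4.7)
The plan is to mimic the standard equivalence between principal bundles with a representation and their associated fiber bundles, constructing explicit natural isomorphisms $\alpha\colon S_\eta\circ Q_\eta\Rightarrow \id_{\mClB_\eta(M,g)^\times}$ and $\beta\colon \id_{\L_\eta(M,g)^\times}\Rightarrow Q_\eta\circ S_\eta$ via fiberwise evaluation. Writing a morphism $q\in \Hom_{\mClRep^\times_w}(\eta,\gamma_p)$ as a pair $(q_0,q_{(1)})$, where $q_0\colon (V,h)\to (T_p^\ast M,g_p^\ast)$ is an isometry and $q_{(1)}\colon \Sigma\to S_p$ is a $\C$-linear isomorphism, the right $\L_\eta$-action on $Q_\eta(S,\gamma)_p$ acts componentwise as $q\cdot \varphi = (q_0\circ\varphi_0,\,q_{(1)}\circ\varphi_{(1)})$, and the tautological representation is $\rho_\eta(\varphi)=\varphi_{(1)}$.

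First I would define $\alpha_{(S,\gamma),p}\colon [q,s]\mapsto q_{(1)}(s)$ from $S_\eta(Q_\eta(S,\gamma))_p$ to $S_p$. Well-definedness on equivalence classes follows from $\rho_\eta(\varphi)=\varphi_{(1)}$, and fiberwise $\C$-linear bijectivity is automatic since each $q_{(1)}$ is an isomorphism. Compatibility with the Clifford structures amounts to checking $\alpha\bigl(\gamma^{\mathrm{new}}_p(y)([q,s])\bigr)=\gamma_p(y)\bigl(\alpha([q,s])\bigr)$, which after unfolding \eqref{eq:1gamma1def0} with $\Lambda_p(q)=q_0$ reduces to the intertwining identity $\gamma_p(\Cl(q_0)(x))\circ q_{(1)}=q_{(1)}\circ\eta(x)$ expressing that $q$ is a morphism in $\mClRep^\times_w$, applied to $x=\Cl(q_0^{-1})(y)$. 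Smoothness of $\alpha$ is verified in any local trivialization induced by an orthonormal coframe of $(M,g)$ together with a compatible local frame of $S$.

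Conversely, given a complex Lipschitz structure $(Q,\Lambda)$ of type $\eta$, I would define $\beta_{(Q,\Lambda),p}\colon Q_p\to \Hom_{\mClRep^\times_w}(\eta,\gamma^{\mathrm{new}}_p)$ by $q\mapsto (\Lambda_p(q),\iota_q)$, where $\iota_q\colon \Sigma\to S_\eta(Q,\Lambda)_p$ sends $s$ to $[q,s]$. That $(\Lambda_p(q),\iota_q)$ is a morphism in $\mClRep^\times_w$ follows from setting $y=\Lambda_p(q)(v)$ in \eqref{eq:1gamma1def0}, which gives $\gamma^{\mathrm{new}}_p(\Lambda_p(q)(v))([q,s])=[q,\eta(v)(s)]$, exactly the condition required. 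That $\beta$ is $\L_\eta$-equivariant and covers the identity over $P_\O(M,g)$ follows from $[q\varphi,s]=[q,\rho_\eta(\varphi)(s)]$ together with the relation $\Lambda_p(q\varphi)=\Lambda_p(q)\circ\Ad_\eta(\varphi)$ of \eqref{eq:Ad0relation}; fiberwise bijectivity holds because $\iota_q$ trivializes the associated bundle $S_\eta(Q,\Lambda)_p$ over the point $q$.

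It remains to check that $\alpha$ and $\beta$ are natural transformations, which is a routine diagram chase using the definitions of $Q_\eta(F)$ and $S_\eta(F)$ given in Proposition \ref{propdef:functors}: for a based pinor isomorphism $F$ one has $F_p(q_{(1)}(s))=(F_p\circ q)_{(1)}(s)$, and for a Lipschitz isomorphism $F\colon Q\to Q'$ one has $S_\eta(F)_p\circ\iota_q=\iota_{F_p(q)}$ directly from \eqref{Sfdef}. The only real obstacle is bookkeeping: carefully keeping track of the two components $(\varphi_0,\varphi_{(1)})$ of morphisms in $\mClRep^\times_w$ and consistently translating between the homomorphism-bundle viewpoint of $Q_\eta$ and the associated-bundle viewpoint of $S_\eta$. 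No new idea beyond Proposition \ref{prop:AdRelation} and Corollary \ref{cor:Homequiv} is required, and the argument parallels the proof of \cite[Proposition 6.1]{Lipschitz} adapted to the complex setting.
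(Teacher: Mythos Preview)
Your argument is correct and is precisely the standard frame-bundle/associated-bundle equivalence that the paper has in mind: the paper does not give an independent proof but simply refers to \cite[Theorem 6.2]{Lipschitz}, and the natural isomorphisms you construct (evaluation $[q,s]\mapsto q_{(1)}(s)$ for $S_\eta\circ Q_\eta$ and $q\mapsto(\Lambda_p(q),\iota_q)$ for $Q_\eta\circ S_\eta$) are exactly that argument spelled out in the complex setting. One cosmetic point: your closing sentence cites \cite[Proposition 6.1]{Lipschitz}, but the relevant result is Theorem 6.2 there (Proposition 6.1 is the analogue of Proposition \ref{propdef:functors}).
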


\begin{proof}
The proof is analogous to that of \cite[Theorem 6.2]{Lipschitz}, so we
leave it to the reader.
\end{proof}

\begin{cor}
Let $\eta\in \Ob(\mClRep_w)$ and $\L_\eta^0$ be a reduction of
$\L_\eta$. Then there exists an equivalence of groupoids between
$\mClB_\eta(M,g)^\times$ and $\L_\eta^0(M,g)^\times$.
\end{cor}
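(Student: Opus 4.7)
The plan is to obtain this corollary as an immediate composition of the two main equivalences already established in the paper. Theorem \ref{thm:BundleLipschitz} provides mutually quasi-inverse equivalences $Q_\eta$ and $S_\eta$ between the groupoids $\mClB_\eta(M,g)^\times$ and $\L_\eta(M,g)^\times$, while Theorem \ref{thm:Lredequiv} provides mutually quasi-inverse equivalences $\cI$ and $\cR$ between the categories $\L_\eta^0(M,g)$ and $\L_\eta(M,g)$.

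First, I would observe that since $\cI$ and $\cR$ are functors between the full categories $\L_\eta^0(M,g)$ and $\L_\eta(M,g)$, they restrict to functors between the corresponding unit groupoids $\L_\eta^0(M,g)^\times$ and $\L_\eta(M,g)^\times$; indeed, the natural isomorphisms $\cN$ and $\cK$ constructed in the proof of Theorem \ref{thm:Lredequiv} are built from isomorphisms of principal bundles, so the equivalence of categories automatically descends to an equivalence of unit groupoids.

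Next, I would define the two comparison functors by composition:
\begin{equation*}
\cR\circ Q_\eta \colon \mClB_\eta(M,g)^\times \longrightarrow \L_\eta^0(M,g)^\times,
\qquad
S_\eta\circ \cI \colon \L_\eta^0(M,g)^\times \longrightarrow \mClB_\eta(M,g)^\times.
\end{equation*}
Their compositions unwind as $(S_\eta\circ \cI)\circ (\cR\circ Q_\eta) = S_\eta\circ (\cI\circ \cR)\circ Q_\eta$ and $(\cR\circ Q_\eta)\circ (S_\eta\circ \cI) = \cR\circ (Q_\eta\circ S_\eta)\circ \cI$. Using the natural isomorphism $\cN\colon \id_{\L_\eta(M,g)^\times}\stackrel{\sim}{\Rightarrow} \cI\circ \cR$ from Theorem \ref{thm:Lredequiv}, together with the natural isomorphism $\id_{\mClB_\eta(M,g)^\times}\stackrel{\sim}{\Rightarrow} S_\eta\circ Q_\eta$ implicit in Theorem \ref{thm:BundleLipschitz}, one obtains, by horizontal composition (whiskering), a natural isomorphism from $\id_{\mClB_\eta(M,g)^\times}$ to $(S_\eta\circ \cI)\circ(\cR\circ Q_\eta)$. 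The analogous argument, using $\cK$ and the natural isomorphism $\id_{\L_\eta(M,g)^\times}\stackrel{\sim}{\Rightarrow} Q_\eta\circ S_\eta$, yields a natural isomorphism from $\id_{\L_\eta^0(M,g)^\times}$ to $(\cR\circ Q_\eta)\circ(S_\eta\circ \cI)$.

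There is no real obstacle here since the statement is, at heart, the general fact that compositions of equivalences are equivalences; the only mild point to verify is that passage to unit groupoids is compatible with $\cI$ and $\cR$, which is immediate because these functors are defined on all of $\L_\eta^0(M,g)$ and $\L_\eta(M,g)$ and send isomorphisms to isomorphisms. One may equivalently state the corollary by pointing out that the composite $S_\eta\circ \cI$ is, up to natural isomorphism, the functor $S_\eta^0$ mentioned in the remark following Proposition \ref{propdef:functors}, which directly assigns a complex pinor bundle to each reduced complex Lipschitz structure.
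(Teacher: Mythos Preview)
Your proposal is correct and follows exactly the same approach as the paper, which simply states that the corollary follows immediately from Theorems \ref{thm:Lredequiv} and \ref{thm:BundleLipschitz}. You have merely unpacked the details of composing the two equivalences and checking that $\cI$, $\cR$ restrict to the unit groupoids, all of which is routine.
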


\begin{proof}
Follows immediately from Theorems \ref{thm:Lredequiv} and \ref{thm:BundleLipschitz}. 
\end{proof}

\begin{prop} 
Let $(Q,\Lambda)$ be a complex Lipschitz structure of type $\eta$ and
let $(Q_0,\Lambda_0,I)$ be an $\L_\eta^0$-reduction of
$(Q,\Lambda)$. Then the complex pinor bundles $S$ and $S_0$ associated
to $(Q,\Lambda)$ and $(Q_0,\Lambda_0)$ are naturally isomorphic.
\end{prop}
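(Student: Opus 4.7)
The plan is to write down an explicit natural isomorphism $\Phi\colon S_0\to S$ and verify the three ingredients: well-definedness, fiberwise bijectivity, and compatibility with the Clifford actions. Recall that $S_0=Q_0\times_{\rho_\eta\circ \iota}\Sigma$ and $S=Q\times_{\rho_\eta}\Sigma$, where $\iota\colon \L_\eta^0\hookrightarrow \L_\eta$ is the inclusion. I would define $\Phi$ fiberwise by
\[
\Phi_p([q_0,s])\eqdef [I(q_0),s]\, ,\qquad q_0\in (Q_0)_p,\ s\in \Sigma\, .
\]

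The $\L_\eta^0$-equivariance of $I$, namely $I(q_0 u_0)=I(q_0)\iota(u_0)$, immediately gives well-definedness: for every $u_0\in \L_\eta^0$,
\[
[I(q_0 u_0),\rho_\eta(u_0)^{-1}s]=[I(q_0)\iota(u_0),\rho_\eta(\iota(u_0))^{-1}s]=[I(q_0),s]\, .
\]
Fiberwise bijectivity of $\Phi_p$ follows from the standard fact that an $\L_\eta^0$-reduction realizes $Q$ as the extension of structure group $Q_0\times_\iota \L_\eta\xrightarrow{\sim}Q$, $[q_0,u]_\iota\mapsto I(q_0)u$; this isomorphism of principal $\L_\eta$-bundles produces exactly the inverse of $\Phi_p$ upon associating the vector bundle with fiber $\Sigma$.

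Compatibility with the Clifford actions is where the reduction condition $\Lambda\circ I=\Lambda_0$ is used. Applying formula \eqref{eq:1gamma1def0} for $\gamma_p$ at the point $I(q_0)\in Q_p$ and substituting $\Lambda_p(I(q_0))=\Lambda_{0,p}(q_0)$, I obtain
\[
\gamma_p(y)\bigl(\Phi_p([q_0,s])\bigr)=[I(q_0),\eta(\Cl(\Lambda_{0,p}(q_0)^{-1})(y))(s)]=\Phi_p\bigl(\gamma_{0,p}(y)([q_0,s])\bigr)
\]
for all $y\in \Cl(T_p^\ast M,g_p^\ast)$, showing that $\Phi$ intertwines $\gamma_0$ and $\gamma$ and hence is a based morphism of complex pinor bundles. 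Combined with the fiberwise bijectivity, this makes $\Phi$ an isomorphism in $\mClB_\eta(M,g)^\times$.

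Finally, naturality amounts to observing that if $F\colon (Q,\Lambda)\to (Q',\Lambda')$ is a morphism of Lipschitz structures that is compatible with chosen reductions in the sense that $F\circ I=I'\circ F_0$ for some morphism $F_0\colon (Q_0,\Lambda_0)\to (Q_0',\Lambda_0')$, then the defining formula \eqref{Sfdef} yields $S_\eta(F)\circ \Phi=\Phi'\circ S^0_\eta(F_0)$ by direct substitution. I do not expect any real obstacle: the entire argument is driven by the identity $\Lambda\circ I=\Lambda_0$ together with the equivariance of $I$, both of which are built into the definition of an $\L_\eta^0$-reduction.
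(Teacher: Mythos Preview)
Your proposal is correct and follows essentially the same route as the paper: the isomorphism $\Phi([q_0,s])=[I(q_0),s]$ is exactly the map the paper writes down, and both proofs check well-definedness via the $\L_\eta^0$-equivariance of $I$ and Clifford compatibility via $\Lambda\circ I=\Lambda_0$. For bijectivity the paper argues surjectivity directly (writing an arbitrary $q\in Q_p$ as $I(q_0)u$ and unwinding) plus a dimension count, whereas you invoke the standard reduction/extension isomorphism $Q_0\times_\iota\L_\eta\cong Q$; these are the same argument packaged differently, and your added naturality check is more explicit than anything in the paper.
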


\begin{proof}
Let $S = Q\times_{\rho} \Sigma$ and $S_0 = Q_0\times_{\rho_{0}}
\Sigma$ denote the vector bundles associated to $Q$ and $Q_0$ through
the tautological representations $\rho$ and $\rho_0=\rho|_{\L_\eta^0}$
of $\L_\eta$ and $\L^0_\eta$. Let $\gamma$ and $\gamma_0$ denote the
structure morphisms making $(S,\gamma)$ and $(S_0,\gamma_0)$ into
complex pinor bundles. A natural isomorphism $F\colon S_0 \to S$ is given by:
\begin{equation*}
F([q_0, s]_{\rho_0}) \eqdef [I(q_0), s]_{\rho}~~\forall q_0\in Q_0~~\forall s\in \Sigma~~.
\end{equation*} 
It is clear that $F$ is well-defined. To show that it is an
isomorphism of vector bundles, let $p$ be any point in $M$.  Since the
fiber $Q_p$ of $Q$ at $p$ is an $\L_\eta$-torsor, it follows that for
every $q\in Q_p$ there exists $q'_0\in Q^0_p$ such that $q=I(q'_0)u$
for some $u\in \L_\eta$. Writing $u=u_0 v$ with $u_0\eqdef r(u)\in
\L_\eta^0$ and $v\eqdef u_0^{-1} u\in \L_\eta$, we have $q=I(q'_0)u_0
v=I(q) v$, where $q_0\eqdef q'_0 u_0\in Q_p^0$. Thus:
\be
[q,s]_\rho=[I(q_0)v, s]_\rho=[I(q_0), \rho(v)s]_\rho=F_p([q_0,\rho(v)s]_{\rho_0})~~.
\ee
This shows that the the linear map $F_p:S^0_p\rightarrow S_p$ is
surjective and hence bijective (since $\dim_\C S_{0 p}=\rk_\C S_0=\rk_\C
S=\dim_\C S_p$). Thus $F$ is an isomorphism of vector bundles.  For
any $x\in \Cl(T_p^\ast M, g_p^\ast)$, any $q_0\in Q_p^0$ and any $s\in
\Sigma$, we have:
\begin{eqnarray*}
& (F_p\circ \gamma_{0,p}(x)) ([q_0,s]_{\rho_0}) = [I(q_0),\eta(\Cl(\Lambda_0(q_0)^{-1})(x))(s)]_\rho = [I(q_0),\eta(\Cl(\Lambda(I(q_0))^{-1})(x))(s)
]_\rho \\ & =\gamma_p(x)([I(q_0),s]_\rho) = (\gamma_0(x)\circ F)([q,s]_\rho)~~,
\end{eqnarray*}
where we used the relation $\Lambda\circ I=\Lambda_0$. This shows that
$(S,\gamma)$ and $(S_0,\gamma_0)$ are isomorphic as complex pinor
bundles.
\end{proof}

\subsection{Topological obstructions for elementary complex Lipschitz structures}

The topological obstructions to existence of complex Lipschitz
structures associated to faithful complex Clifford representations
were determined in \cite{FriedrichTrautman}. Since for even $d$ such
representations are $\C$-irreducible, the results of op. cit. together
with those of \cite{Karoubi} give:

\begin{thm}\cite{Karoubi,FriedrichTrautman}
Suppose that $d$ is even. Then $(M,g)$ admits an elementary complex
Lipschitz structure {\em if and only if} it admits a $\Pin^c(V,h)$
structure, i.e. if and only if there exists a principal $\U(1)$-bundle
$E$ over $M$ such that:
\begin{equation}
\label{evobs}
\w_2^-(M)+\w_2^+(M)+\w_1^-(M)^2+\w_1^-(M)\w_1^+(M)=\w_2(E)\, .
\end{equation}
\end{thm}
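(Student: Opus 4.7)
The plan is two-fold: first, reduce the existence of an elementary complex Lipschitz structure to the existence of a $\Pin^{c}(V,h)$-structure on $(M,g)$; second, invoke the classical cohomological obstruction for the latter.

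For the first step, we use that for even $d$ the elementary complex Lipschitz group satisfies $\L_\gamma = \Pin(V,h)\cdot \C^\times \simeq \rGamma(V,h)$, and that $\Pin^{c}(V,h)$ is a reduction of $\L_\gamma$ via the equivariant retraction $r$ of \eqref{r} (as noted in the example following the definition of reduction). Theorem \ref{thm:Lredequiv} then yields an equivalence of groupoids between complex Lipschitz structures of type $\eta$ on $(M,g)$ and $\Pin^{c}(V,h)$-structures on $(M,g)$. In particular, existence of one is equivalent to existence of the other, so we are reduced to showing that $(M,g)$ admits a $\Pin^{c}(V,h)$-structure iff \eqref{evobs} holds.

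For the second step, apply non-abelian cohomology to the short exact sequence
\begin{equation*}
1\to \U(1)\to \Pin^{c}(V,h)\xrightarrow{\tAd} \O(V,h)\to 1\, ,
\end{equation*}
established earlier. This induces a connecting map $\delta\colon H^{1}(M,\O(V,h))\to H^{2}(M,\U(1))\simeq H^{2}(M,\Z)$, and a $\Pin^{c}(V,h)$-lift of $P_{\O}(M,g)$ exists iff $\delta([P_{\O}(M,g)])$ coincides with the first Chern class of some principal $\U(1)$-bundle $E$; equivalently, its mod-$2$ reduction must equal $\w_{2}(E)$.

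The main obstacle is identifying this mod-$2$ connecting class with the precise combination in \eqref{evobs}. The strategy is to choose an auxiliary Riemannian metric on $M$, which splits $TM\simeq T^{+}\oplus T^{-}$ with $T^{\pm}$ of rank $p,q$, and reduces the structure group of $P_{\O}(M,g)$ from $\O(p,q)$ to $\O(p)\times \O(q)$. Using the isomorphism $\Pin^{c}(p,q)\simeq \Pin(p,q)\cdot \U(1)$ and the known behavior of the classifying map $B(\O(p)\times \O(q))\to K(\Z_{2},2)$ that detects the obstruction, a direct Stiefel-Whitney computation in $H^{\ast}(B(\O(p)\times \O(q)),\Z_{2})$ identifies the mod-$2$ reduction of $\delta([P_{\O}(M,g)])$ with $\w_{2}^{-}(M)+\w_{2}^{+}(M)+\w_{1}^{-}(M)^{2}+\w_{1}^{-}(M)\w_{1}^{+}(M)$. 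This last Stiefel-Whitney identification is precisely the content of the results of \cite{Karoubi, FriedrichTrautman} cited in the statement, yielding \eqref{evobs} and completing the proof.
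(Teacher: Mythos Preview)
The paper does not prove this theorem; it is stated as a result imported from \cite{Karoubi,FriedrichTrautman}, with only the one-sentence remark preceding it (``The topological obstructions \ldots were determined in \cite{FriedrichTrautman}. Since for even $d$ such representations are $\C$-irreducible, the results of op.~cit.\ together with those of \cite{Karoubi} give:''). So there is no in-paper proof to compare against, and your first step---using the reduction $\Pin^c(V,h)\subset \rGamma(V,h)$ together with Theorem~\ref{thm:Lredequiv} to identify elementary complex Lipschitz structures with $\Pin^c(V,h)$-structures---is exactly the bridge the paper's machinery is designed to supply and is correct.

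Your second step, however, has a genuine slip in the cohomological bookkeeping. From the central extension $1\to \U(1)\to \Pin^c(V,h)\to \O(V,h)\to 1$ the connecting map lands in $H^2(M,\underline{\U(1)})\simeq H^3(M,\Z)$, not $H^2(M,\Z)$; and the condition for a lift is that this class \emph{vanish}, not that it agree with $c_1(E)$ for some line bundle. The auxiliary $\U(1)$-bundle $E$ in \eqref{evobs} does not enter via that sequence. The standard route (and the one in \cite{Karoubi}) instead uses the double cover
\[
1\longrightarrow \Z_2 \longrightarrow \Pin^c(p,q)\longrightarrow \O(p,q)\times \U(1)\longrightarrow 1,
\]
so that a $\Pin^c$-structure is a $\Z_2$-lift of the pair $(P_{\O}(M,g),E)$ for some $\U(1)$-bundle $E$, with obstruction in $H^2(M,\Z_2)$ equal to the Karoubi class plus $\w_2(E)$. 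This is what yields \eqref{evobs}. Since you ultimately defer the Stiefel--Whitney identification to the cited references anyway, the overall argument survives, but the intermediate sentence about $\delta$ and $H^2(M,\U(1))\simeq H^2(M,\Z)$ should be replaced by the $\Z_2$-extension above.
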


\noindent 
When $d$ is odd, the results of \cite{FriedrichTrautman} concern {\em
  non-elementary} complex Lipschitz structures. The following
Corollary of Theorem \ref{thm:BundleLipschitz} shows that on an
odd-dimensional pseudo-Riemannian manifold any bundle of irreducible
complex Clifford modules over $\Cl(M,g)$ can be understood as a
complex spinor bundle associated to a $\Spin^{c}(V,h)$
structure on $(M,g)$ through the tautological irreducible
representation of $\Spin^{c}(V,h)$.

\begin{cor}
\label{cor:SpincLipschitz}
Suppose that $d$ is odd. Then $(M,g)$ admits an elementary complex
pinor bundle $(S,\gamma)$ {\em if and only if} it admits a
$\Spin^{c}(V,h)$-structure $(Q,\Lambda)$ such that $S$ is a vector
bundle associated to $Q$ through the tautological representation of
$\Spin^{c}(V,h)$. Furthermore, the groupoid of elementary complex
pinor bundles is isomorphic to the groupoid of $\Spin^{c}(V,h)$
structures. When $(M,g)$ admits a $\Spin^c(V,h)$ structure, then
isomorphism classes of elementary complex pinor bundles form a torsor
over $H^2(M,\mathbb{Z})$.
\end{cor}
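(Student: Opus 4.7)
The plan is to chain together the two equivalences established in Theorem \ref{thm:BundleLipschitz} and Theorem \ref{thm:Lredequiv}. Fix an irreducible complex Clifford representation $\eta\colon \Cl(V,h)\to \End_\C(\Sigma)$; since $d$ is odd, Proposition \ref{prop:wfaithful} guarantees that $\eta$ is weakly-faithful and that the two Pauli representations are unbasedly isomorphic in $\mClRep_w$, so $\eta$ is well-defined up to unbased isomorphism. An elementary complex pinor bundle on $(M,g)$ is then the same thing as an object of $\mClB_\eta(M,g)^\times$, and Theorem \ref{thm:BundleLipschitz} gives an equivalence with the groupoid $\L_\eta(M,g)^\times$ of complex Lipschitz structures of type $\eta$.

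Next, I would invoke Proposition \ref{prop:Gammaodd}, which identifies $\L_\eta$ with $\Gamma^s(V,h)\simeq \Spin^c(V,h)\cdot \C^\times$, together with the example following the definition of a reduction, which exhibits $\Spin^c(V,h)$ as a reduction of $\L_\eta$ via the retraction $r|_{\Gamma^s(V,h)}$. Applying Theorem \ref{thm:Lredequiv} to this reduction yields an equivalence $\L_\eta(M,g)^\times\simeq \L^0_\eta(M,g)^\times$ with $\L^0_\eta=\Spin^c(V,h)$. Since the composition $\Ad_\eta\circ\iota$ coincides with the standard double cover $\Spin^c(V,h)\twoheadrightarrow \SO(V,h)\subset \O(V,h)$, the equivariant map $\Lambda_0\colon Q_0\to P_\O(M,g)$ of a reduced Lipschitz structure automatically factors through the oriented coframe bundle, so reduced complex Lipschitz structures of this type are precisely $\Spin^c(V,h)$-structures on $(M,g)$ (and, in particular, their existence forces $M$ to be orientable).

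Composing these two equivalences yields the desired equivalence of groupoids, which gives the ``if and only if'' existence statement. To see that $S$ arises from $Q$ through the tautological representation, I would trace through the explicit quasi-inverse $S_\eta$ of Proposition \ref{propdef:functors}: the associated bundle $Q\times_{\rho_\eta}\Sigma$ is canonically isomorphic to $Q_0\times_{\rho_\eta|_{\Spin^c(V,h)}}\Sigma$ via the inclusion $I\colon Q_0\hookrightarrow Q$ of Proposition \ref{prop:cI}, and $\rho_\eta|_{\Spin^c(V,h)}$ is by construction the tautological representation of $\Spin^c(V,h)$. Finally, the torsor claim follows by combining the groupoid equivalence with the classical fact that isomorphism classes of $\Spin^c(V,h)$-structures on a connected manifold form an $H^2(M,\Z)$-torsor whenever such structures exist.

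The main technical point to check carefully is the identification in the second paragraph, namely that a reduced Lipschitz structure with structure group $\Spin^c(V,h)$ is literally a $\Spin^c(V,h)$-structure in the conventional sense; all other ingredients are direct consequences of results proved earlier. No new constructions beyond the reduction functors $\cI$, $\cR$ of Propositions \ref{prop:cI}--\ref{prop:cR} are required.
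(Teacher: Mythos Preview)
Your proposal is correct and follows essentially the same route as the paper: both arguments chain Theorem~\ref{thm:BundleLipschitz} with the reduction equivalence of Theorem~\ref{thm:Lredequiv}, invoking Proposition~\ref{prop:Gammaodd} and Proposition~\ref{prop:Cliffordgrouphomotopy} to recognize $\Spin^c(V,h)$ as a reduction of $\L_\eta$, and then appeal to the classical $H^2(M,\Z)$-torsor description of $\Spin^c$ structures. You are in fact slightly more careful than the paper in two places: you cite Theorem~\ref{thm:Lredequiv} (rather than Theorem~\ref{thm:BundleLipschitz}) at the reduction step, and you spell out why the reduced structure is a $\Spin^c$ structure in the usual sense and why $S$ is the tautologically associated bundle.
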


\begin{proof}
Since an irreducible complex representation is weakly faithful, the
first statement in Theorem \ref{thm:BundleLipschitz} implies that
$(M,g)$ admits an elementary complex pinor bundle $(S,\gamma)$ of type
$\eta$ if and only if it admits a Lipschitz structure $(Q,\Lambda)$ of
irreducible type $\eta$. Proposition \ref{prop:Gammaodd} implies that
the associated Lipschitz group $\L_\eta$ is isomorphic to
$\mathbb{R}_{>0}\Spin(V,h)$. By Proposition
\ref{prop:Cliffordgrouphomotopy} we deduce that $\L_\eta$ is
homotopy-equivalent to $\L^0_{\eta} = \Spin^{c}(V,h)$ via the
normalization morphism $r$ of equation \eqref{r}, a homotopy inverse
of which is provided by the inclusion $\iota\colon
\Spin^{c}(V,h)\hookrightarrow \L_\eta$. Hence Theorem
\ref{thm:BundleLipschitz} applies and we conclude that the groupoid of
elementary complex pinor bundles is isomorphic to the groupoid of
$\Spin^c(V,h)$ structures on $(M,g)$. In particular, every elementary
complex pinor bundle is associated to a $\Spin^c(V,h)$ structure on
$(M,g)$ by means of the tautological representation. The remaining
statement follows since $\Spin^c(V,h)$ structures on $(M,g)$ form a
torsor over $H^2(M,\mathbb{Z})$.
\end{proof}

\begin{cor}
A pseudo-Riemannian manifold $(M,g)$ admits an elementary complex
pinor bundle {\em if and only if} each of the following conditions are
satisfied:
\begin{enumerate}[1.]
\itemsep 0.0em
\item $\w_1(M)=0$ 
\item There exists a principal $\U(1)$-bundle $E$ over $M$ such that:
\begin{equation*}
\label{oddobs}
\w_2^-(M)+\w_2^+(M)=\w_2(E)~~.
\end{equation*}
\end{enumerate}
In particular, $M$ must be orientable. 
\end{cor}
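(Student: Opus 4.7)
The plan is to split by the parity of $d = p+q$ and apply the existence criterion already proved for that parity: Corollary~\ref{cor:SpincLipschitz} when $d$ is odd, and the preceding theorem when $d$ is even. In each case, the goal is to reduce the resulting obstruction to conditions 1 and 2 of the statement.

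For $d$ odd, Corollary~\ref{cor:SpincLipschitz} identifies existence of an elementary complex pinor bundle with existence of a $\Spin^{c}(V,h)$-structure on $(M,g)$. Since $\Spin^{c}(V,h)$ covers $\SO(V,h)$ rather than $\O(V,h)$, such a structure forces a reduction of the orthogonal coframe bundle to $\SO(V,h)$, equivalent to $\w_1(M) = 0$; this is condition 1. Under $\w_1(M) = 0$ one has $\w_1^{+}(M) = \w_1^{-}(M)$ in $H^1(M,\Z_2)$, and the standard $\U(1)$-central-extension obstruction identifies the secondary obstruction with the class $\w_2^{+}(M) + \w_2^{-}(M)$, which must belong to the image of the mod-$2$ reduction $H^2(M,\Z) \to H^2(M,\Z_2)$, i.e.\ equal $\w_2(E)$ for some principal $\U(1)$-bundle $E$; this is condition 2.

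For $d$ even, the preceding theorem states that existence is equivalent to \eqref{evobs}. Imposing condition 1 gives $\w_1^{+}(M) = \w_1^{-}(M)$, whence the $\w_1$-quadratic correction in \eqref{evobs} collapses to $2\w_1^{-}(M)^2 = 0$ in $H^2(M,\Z_2)$, and \eqref{evobs} reduces to condition 2. The converse implication---namely, that existence of an elementary complex pinor bundle forces $\w_1(M) = 0$ already in even dimension---is the main obstacle, since bare $\Pin^{c}(V,h)$-structures are not a priori restricted to orientable manifolds. I would attack this by analyzing the image of $\Ad_\eta$ inside $\O(V,h)$ and by tracking how the $\C^\times$-factor in $\rGamma(V,h) \simeq \Pin^{c}(V,h)\cdot\C^\times$ constrains the reduction of the coframe bundle, with the aim of deriving orientability as a necessary consequence rather than an added hypothesis; once this is settled, the remaining steps are routine $\Z_2$-cohomological manipulations.
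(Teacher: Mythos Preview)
Your odd-dimensional argument is exactly the paper's: invoke Corollary~\ref{cor:SpincLipschitz} to reduce to existence of a $\Spin^c(V,h)$ structure, then read off the two cohomological conditions from Karoubi's characterization of $\Spin^c$ structures. The paper's proof is just the one line ``Follows immediately from Corollary~\ref{cor:SpincLipschitz} and the results of \cite{Karoubi}.''

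The gap is in your treatment of even $d$. You correctly identify that deriving $\w_1(M)=0$ from existence of a $\Pin^c(V,h)$ structure is ``the main obstacle,'' but the attack you propose cannot succeed: for even $d$ the elementary complex Lipschitz group is $\rGamma(V,h)\simeq\Pin^c(V,h)\cdot\C^\times$, and $\Ad_\eta$ surjects onto all of $\O(V,h)$ (this is stated explicitly just before Proposition~\ref{prop:Gammaodd}). There is no hidden orientability constraint to extract; non-orientable manifolds satisfying \eqref{evobs} do carry $\Pin^c$ structures and hence elementary complex pinor bundles. So the biconditional, read as a claim about even dimensions, is simply false, and no analysis of the $\C^\times$ factor will rescue it.

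The resolution is contextual: the corollary sits in the odd-$d$ discussion immediately after Corollary~\ref{cor:SpincLipschitz}, and its proof cites only that corollary. It is intended (though not explicitly labeled) as the odd-dimensional companion to the even-dimensional theorem quoted just above. Restrict your argument to odd $d$ and drop the even case entirely; what remains is correct and matches the paper.
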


\begin{proof}
Follows immediately from Corollary \ref{cor:SpincLipschitz} and the
results of \cite{Karoubi}.
\end{proof}

\subsection{Realification of complex pinor bundles}

The realification functor $R:\Rep_\C^w(\Cl(V,h))\rightarrow
\Rep_\R^w(\Cl(V,h))$ of Subsection \ref{subsec:realif} induces a {\em
  pinor bundle realification functor}
$\mathfrak{R}:\mClB_{\eta}(M,g)\rightarrow \ClB_{\eta_\R}(M,g)$, where
$\ClB_{\eta_\R}(M,g)$ denotes the category of real pinor bundles of
type $\eta_\R$ and based morphisms of such (see
\cite{Lipschitz}). When $p-q\equiv_8 3,4,6,7$, this restricts to a
functor which maps elementary complex pinor bundles to elementary real
pinor bundles.  Since this restricted functor is determined on fibers,
the results of Subsection \ref{subsec:realif} imply that it is
faithful but not full.  The restricted functor need not be essentially
surjective since a real elementary pinor bundle need not admit a
globally-defined complex structure which is a section of its Schur
sub-bundle. However, we have:

\begin{prop}
Assume that $p-q\equiv_8 3,7$ and that $M$ is orientable. Then
$\mathfrak{R}$ restricts to a strictly surjective functor from the
groupoid of elementary complex pinor bundles defined on $(M,g)$ to the
groupoid of elementary real pinor bundles defined on $(M,g)$.
\end{prop}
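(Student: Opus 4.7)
The plan is to produce, from any elementary real pinor bundle $(\Sigma,\xi)$ on $(M,g)$, a global complex structure on $\Sigma$ lying fiberwise in the commutant of $\xi$, and then to verify that the resulting complex vector bundle inherits the structure of an elementary complex pinor bundle whose realification recovers $(\Sigma,\xi)$.

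First, I would use orientability to fix an orientation of $M$. By a standard construction, this determines a globally-defined Clifford volume form $\nu\in \Gamma(M,\Cl(M,g))$ whose value at a point $p$ is the volume element of $(T_p^*M,g_p^*)$ associated with the induced orientation of $T_p^*M$. Since $p-q\equiv_8 3,7$ implies that $d=p+q$ is odd and that $\sigma_{p,q}=(-1)^{(p-q-1)/2}=-1$, the element $\nu$ is central in $\Cl(M,g)$ and satisfies $\nu^2=-\mathbf{1}$ fiberwise.

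Next, I would set $J\eqdef \xi(\nu)\in \Gamma(M,\End_\R(\Sigma))$. The relation $\nu^2=-1$ gives $J^2=-\id_\Sigma$, so $J$ is a global almost complex structure on the real vector bundle $\Sigma$. Endowing $\Sigma$ with $J$ yields a complex vector bundle $S$. Centrality of $\nu$ in $\Cl(M,g)$ implies that $J_p$ lies in the commutant of $\xi_p(\Cl(T_p^*M,g_p^*))$ at every $p\in M$, so each $\xi_p(x)$ is $\C$-linear with respect to $J_p$. Consequently $\xi$ factors through a unital morphism of bundles of $\C$-algebras $\gamma\colon \Cl(M,g)\to \End_\C(S)$, and by construction $\mathfrak{R}(S,\gamma)=(\Sigma,\xi)$.

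Finally, at each point $p\in M$ the fiber $\gamma_p$ is, by the classification recalled in Subsection~\ref{subsec:realif}, precisely the Pauli representation attached to the commutant complex structure $J_p=+\xi_p(\nu_p)$; in particular $\gamma_p$ is complex-irreducible and therefore weakly-faithful, so $(S,\gamma)$ is an elementary complex pinor bundle on $(M,g)$. This proves strict surjectivity of $\mathfrak{R}$ on objects. The only genuine obstacle here is globalizing the fiberwise choice of a commutant complex structure: without orientability, $\nu$ would be defined only up to a sign on overlaps of local charts and no global $J$ could be constructed, which is exactly why the hypothesis $\w_1(M)=0$ is required.
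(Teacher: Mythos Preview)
Your proof is correct and follows essentially the same approach as the paper: both arguments use the global Clifford volume element $\nu$ (available by orientability) to define the complex structure $J=\xi(\nu)$, observe via centrality that $J$ lies fiberwise in the commutant, and conclude that $(\Sigma,\xi)$ is the realification of the resulting elementary complex pinor bundle. Your version is in fact slightly more detailed, making explicit why $\nu^2=-1$ and why $\nu$ is central in this signature range.
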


\begin{proof}
Let $\nu\in \Omega(M)$ be the volume form of $(M,g)$ determined by
some fixed orientation of $M$. Let $(\Sigma,\rho)$ be an elementary
real pinor bundle on $(M,g)$. Since $p-q\equiv_8 3,7$, the
endomorphism $J\eqdef \gamma(\nu)\in \Gamma(M,End(S))$ is a
globally-defined complex structure on $S$ which is a section of the
Schur bundle of $\Sigma$ (see \cite{fierz}). Thus $J_p$ lies in the
commutant of the image of the real Clifford representation
$\rho_p:\Cl(T_p^\ast M, g_p^\ast)\rightarrow \End_\R(\Sigma_p)$ for
any $p\in M$. This implies that $(\Sigma,\rho)$ is the realification
of the elementary complex pinor bundle obtained by endowing $\Sigma$
with the complex structure $J$.
\end{proof}

\begin{ack}
The work of C. I. L. was supported by grant IBS-R003-S1. The work of C. S. S. is supported by a Humboldt Research Fellowship from the Alexander Von Humboldt Foundation. The authors thank A. Moroianu for stimulating discussions and for interest in their work.
\end{ack}

\end{document}